    \newcommand{\dom}{\mbox{\rm dom}}
    \newcommand{\ran}{\mbox{\rm ran}}
    \newcommand{\thzfc}{\mathrm{ZFC}}
    \newcommand{\Mbf}{\mathbf{M}}
    \newcommand{\Cn}{\mathbf{Cn}}
    \newcommand{\Awf}{\mathcal{A}}
    \newcommand{\Ewf}{\mathcal{E}}
    \newcommand{\Iwf}{\mathcal{I}}
    \newcommand{\Jwf}{\mathcal{J}}
    \newcommand{\Mwf}{\mathcal{M}}
    \newcommand{\Nwf}{\mathcal{N}}
    \newcommand{\Pwf}{\mathcal{P}}
    \newcommand{\Swf}{\mathcal{S}}
    \newcommand{\bfrak}{\mathfrak{b}}
    \newcommand{\cfrak}{\mathfrak{c}}
    \newcommand{\dfrak}{\mathfrak{d}}
    \newcommand{\efrak}{\mathfrak{e}}
    \newcommand{\eubd}{\mathfrak{e}_{\mathbf{ubd}}}
     \newcommand{\efin}{\mathfrak{e}_{\mathbf{fin}}}
    \newcommand{\sfrak}{\mathfrak{s}}
    \newcommand{\menos}{\smallsetminus}
    \newcommand{\pts}{\mathcal{P}}
    \newcommand{\frestr}{\!\!\upharpoonright\!\!}
    \newcommand{\add}{\mbox{\rm add}}
    \newcommand{\cov}{\mbox{\rm cov}}
    \newcommand{\non}{\mbox{\rm non}}
    \newcommand{\cof}{\mbox{\rm cof}}
    \newcommand{\limdir}{\mbox{\rm limdir}}
    \newcommand{\Bor}{\mathds{B}}
    \newcommand{\Eor}{\mathds{E}}
    \newcommand{\LOCor}{\mathds{LOC}}
    \newcommand{\Mor}{\mathds{M}}
    \newcommand{\Por}{\mathds{P}}
    \newcommand{\Pbb}{\mathds{P}}
    \newcommand{\Qor}{\mathds{Q}}
    \newcommand{\Ior}{\mathds{I}}
    \newcommand{\Ror}{\mathds{R}}
    \newcommand{\Pror}{\mathbb{PR}}
    \newcommand{\Qnm}{\dot{\mathds{Q}}}
    \newcommand{\R}{\mathbb{R}}
    \newcommand{\cf}{\mbox{\rm cf}}
    \newcommand{\sii}{{\ \mbox{$\Leftrightarrow$} \ }}
    \newcommand{\la}{\langle}
    \newcommand{\ra}{\rangle}
\newcommand{\minLc}{\mathrm{minLc}}
\newcommand{\Seq}{\mathrm{seq}}
\newcommand{\Fr}{\mathrm{Fr}}
\newcommand{\Rbf}{\mathbf{R}}
\newcommand{\Cbf}{\mathbf{C}}
\newcommand{\Lc}{\mathbf{Lc}}
\newcommand{\Lb}{\mathbf{Lb}}
\newcommand{\Scal}{\mathcal{S}}
\newcommand{\id}{\mathrm{id}}
\newcommand{\blc}{\mathfrak{b}^{\mathrm{Lc}}}
\newcommand{\dlc}{\mathfrak{d}^{\mathrm{Lc}}}
\newcommand{\balc}{\mathfrak{b}^{\mathrm{aLc}}}
\newcommand{\blcp}{\mathfrak{b}^{\mathrm{Lc_0}}}
\newcommand{\leqT}{\preceq_{\mathrm{T}}}
\newcommand{\eqT}{\cong_{\mathrm{T}}}
\newcommand{\gen}{\mathrm{gen}}
\newcommand{\DMor}{\mathbb{D}\mathbb{M}}
\newcommand{\sqsubm}{\sqsubset^{\rm m}}
\newcommand{\st}{\mid}
\newcommand{\set}[2]{\{#1 \st\, #2\}}
\newcommand{\largeset}[2]{\left\{#1 \;\middle|\; #2\right\}}
\newcommand{\seq}[2]{\la #1 \st #2\ra}
\newcommand{\baire}{\omega^\omega}
\newcommand{\cantor}{2^\omega}
\newcommand\subsetdot{\mathrel{\ooalign{$\subset$\cr
  \hidewidth\hbox{$\cdot\mkern3mu$}\cr}}}
	\definecolor{ultramarineblue}{rgb}{0.25, 0.4, 0.96}
\definecolor{cornellred}{rgb}{0.7, 0.11, 0.11}
\definecolor{cobalt}{rgb}{0.0, 0.28, 0.67}
\definecolor{bleudefrance}{rgb}{0.19, 0.55, 0.91}
\definecolor{darkblue}{rgb}{0.0, 0.0, 0.55}
\definecolor{ferrarired}{rgb}{1.0, 0.11, 0.0}
\definecolor{brandeisblue}{rgb}{0.0, 0.44, 1.0}
\definecolor{azure(colorwheel)}{rgb}{0.0, 0.5, 1.0}
\definecolor{aqua}{rgb}{0.0, 1.0, 1.0}
\definecolor{aguamarina}{cmyk}{0.85,0,0.33,0}
\definecolor{cafe}{cmyk}{0,0.81,1,0.60}
\definecolor{canela}{cmyk}{0.14,0.42,0.56,0}
\definecolor{darkgray}{cmyk}{0,0,0,0.50}
\definecolor{emerald}{cmyk}{0.91,0,0.88,0.12}
\definecolor{fresa}{cmyk}{0,1,0.50,0}
\definecolor{gold}{cmyk}{0,0.10,0.84,0}
\definecolor{lightgray}{cmyk}{0,0,0,0.30}
\definecolor{marron}{cmyk}{0,0.72,1,0.45}
\definecolor{melon}{cmyk}{0,0.29,0.84,0}
\definecolor{ladri}{cmyk}{0,0.77,0.87,0}
\definecolor{olive}{cmyk}{0.64,0,0.95,0.40}
\definecolor{orange}{cmyk}{0,0.42,1,0}
\definecolor{peach}{cmyk}{0,0.46,0.50,0}
\definecolor{pink}{cmyk}{0,0.10,0.10,0}
\definecolor{orange}{cmyk}{0,0.42,1,0}
\definecolor{pine}{cmyk}{0.92,0,0.59,0.25}
\definecolor{purple}{cmyk}{0.45,0.86,0,0}
\definecolor{violet}{cmyk}{0.07,0.90,0,0.34}
\definecolor{craneorange}{RGB}{252,187,6}
\definecolor{red(ncs)}{rgb}{0.77, 0.01, 0.2}
\definecolor{aguamarina}{cmyk}{0.85,0,0.33,0}
\definecolor{cafe}{cmyk}{0,0.81,1,0.60}
\definecolor{canela}{cmyk}{0.14,0.42,0.56,0}
\definecolor{darkgray}{cmyk}{0,0,0,0.50}
\definecolor{emerald}{cmyk}{0.91,0,0.88,0.12}
\definecolor{fresa}{cmyk}{0,1,0.50,0}
\definecolor{gold}{cmyk}{0,0.10,0.84,0}
\definecolor{lightgray}{cmyk}{0,0,0,0.30}
\definecolor{marron}{cmyk}{0,0.72,1,0.45}
\definecolor{melon}{cmyk}{0,0.29,0.84,0}
\definecolor{ladri}{cmyk}{0,0.77,0.87,0}
\definecolor{olive}{cmyk}{0.64,0,0.95,0.40}
\definecolor{orange}{cmyk}{0,0.42,1,0}
\definecolor{peach}{cmyk}{0,0.46,0.50,0}
\definecolor{pink}{cmyk}{0,0.10,0.10,0}
\definecolor{orange}{cmyk}{0,0.42,1,0}
\definecolor{pine}{cmyk}{0.92,0,0.59,0.25}
\definecolor{purple}{cmyk}{0.45,0.86,0,0}
\definecolor{violet}{cmyk}{0.07,0.90,0,0.34}
\definecolor{ceruleanblue}{rgb}{0.16, 0.32, 0.75}
\DeclareSymbolFont{extraup}{U}{zavm}{m}{n}
\DeclareMathSymbol{\varheart}{\mathalpha}{extraup}{86}
\DeclareMathSymbol{\vardiamond}{\mathalpha}{extraup}{87}
\definecolor{dodger}{rgb}{0.0,0.5,1.0}
\definecolor{amber}{rgb}{1.0,0.49,0.0}
\definecolor{ogreen}{RGB}{107,142,35}
\title[Soft-linkedness]{Soft-linkedness}
\author{Miguel A.~Cardona}
\address{Einstein Institute of Mathematics,
The Hebrew University of Jerusalem. Givat Ram, Jerusalem, 9190401, Israel}
\email{\href{mailto:miguel.cardona@mail.huji.ac.il}{miguel.cardona@mail.huji.ac.il}}
\urladdr{\url{https://sites.google.com/mail.huji.ac.il/miguel-cardona-montoya/home-page}}
\thanks{The author was supported by the Slovak Research and Development Agency under Contract No. APVV-20-0045; by Pavol Jozef \v{S}af\'arik University at a postdoctoral position; and by Israel Science Foundation for partial support of this research by grant 2320/23 (2023-2027).}
\subjclass[2020]{03E05,03E17,03E35,03E40}
\keywords{Soft-linked, Fr\'echet-linked, evasion number, preservation of
evasion number, Cicho\'n's diagram}
\date{}
\definecolor{sub0}{RGB}{29,32,137}
\definecolor{sub1}{RGB}{1,71,157}
\definecolor{sub2}{RGB}{1,104,183}
\definecolor{sub3}{RGB}{0,160,234}
\definecolor{sug}{RGB}{0,154,68}
\definecolor{suy}{RGB}{208,219,1}
\begin{document}

\makeatletter
\def\@roman#1{\romannumeral #1}
\makeatother

\newcounter{enuAlph}
\renewcommand{\theenuAlph}{\Alph{enuAlph}}

\numberwithin{equation}{section}
\renewcommand{\theequation}{\thesection.\arabic{equation}}

\theoremstyle{plain}
  \newtheorem{theorem}[equation]{Theorem}
  \newtheorem{corollary}[equation]{Corollary}
  \newtheorem{lemma}[equation]{Lemma}
  \newtheorem{mainlemma}[equation]{Main Lemma}
  \newtheorem*{mainthm}{Main Theorem}
  \newtheorem{prop}[equation]{Proposition}
  \newtheorem{clm}[equation]{Claim}
  \newtheorem{fact}[equation]{Fact}
  \newtheorem{exer}[equation]{Exercise}
  \newtheorem{question}[equation]{Question}
  \newtheorem{problem}[equation]{Problem}
  \newtheorem{conjecture}[equation]{Conjecture}
  \newtheorem{assumption}[equation]{Assumption}
  \newtheorem*{thm}{Theorem}
  \newtheorem{teorema}[enuAlph]{Theorem}
  \newtheorem*{corolario}{Corollary}
\theoremstyle{definition}
  \newtheorem{definition}[equation]{Definition}
  \newtheorem{example}[equation]{Example}
  \newtheorem{remark}[equation]{Remark}
  \newtheorem{notation}[equation]{Notation}
  \newtheorem{context}[equation]{Context}

  \newtheorem*{defi}{Definition}
  \newtheorem*{acknowledgements}{Acknowledgements}

\def\sectionautorefname{Section}
\def\subsectionautorefname{Subsection}


\begin{abstract}
 We have revised the softness property introduced by J\"org Brendle and Haim Judah (perfect sets of random reals. Israel J. Math., 83(1-2):153–176,18
1993), to present a new definition of a class of posets called $\sigma$-soft-linked. Our work demonstrates that these posets
 work well to preserve the evasion number as well as the bounding number small in generic extensions. Furthermore, we establish a connection between our concept and the Fr\'echet-linked notion introduced by Diego A. Mej\'ia (Matrix iterations with vertical support restrictions. In Proceedings of the 14th1
and 15th Asian Logic Conferences, pages 213–248. World Sci. Publ., Hackensack, NJ, 2019).
\end{abstract}
\maketitle

\makeatother

\section{Introduction}\label{sec:intro}

Blass's~\cite{blasse} study of set-theoretic aspects of the Specker phenomenon in Abelian group theory led him to introduce the concept of evasion and prediction, which is a combinatorial concept. He investigated several evasion numbers and how large they are compared to other cardinal characteristics. In particular, with the cardinals in Cicho\'n's diagram. Brendle~\cite{BrendlevasionI} extended these ideas and studied them more closely.

Fix $b\in((\omega+1)\smallsetminus2)^\omega$. An \emph{$\prod b$-predictor} is a pair $\pi=(D,\seq{\pi_n}{n\in D})$ such that $D\in[\omega]^{\aleph_0}$ and $\pi_n\colon\prod_{k<n}b(k)\to b(n)$ for $n\in D$. Let $\Pi_b$ be the collection of $\prod b$-predictors. 
 
For $\pi\in\Pi_b$ and $f\in\prod b$ write 
\[\pi\sqsubset^{\textbf{pr}} f\text{\ iff \ } f(n)=\pi_n(f{\upharpoonright}n)\text{\ for all but finitely many\ } n\in D\]
in which case we say that $\pi$ predict $f$; otherwise \emph{$f$ evades $\pi$}.  We define the corresponding~\emph{evasion number}
\[\efrak_b:=\efrak_{\prod b}=\min\set{|F|}{F\subseteq\prod b\textrm{\ and\ }\forall \pi\in\Pi_b\,\exists f\in F\,(f \text{\ evades\ } \pi)}.\]
It is clear that $b\leq c$ we have that $\efrak_c\leq\efrak_b$. In the case that $b$ eventually equals $\omega$, we get $\efrak:=\efrak_b$ the \emph{evasion number}; in case that $\prod b=n^\omega$ ($n\geq2$), we set $\efrak_n:=\efrak_b$. Finally,  we let $\eubd:=\min\set{\efrak_b}{b\in\baire}$ denote the \emph{unbounded evasion number} and let $\efrak_\textbf{fin}:=\min\set{\efrak_n}{n\in\omega}$ the \emph{finite evasion number}. It is well-known that $\efrak\leq\eubd\leq\efin$.  Below we summarize some ZFC results related to the evasion numbers. 

\begin{lemma}[{\cite[Sec.~4.2]{BrendlevasionI}}]\label{sumeva}\

\begin{enumerate}[label=\rm(\arabic*)]
    \item\label{sumevaa} $\efrak\geq\min\{\bfrak,\eubd\}$. Moreover, $\min\{\bfrak,\efrak\}=\min\{\bfrak,\eubd\}$, so $\bfrak>\efrak$ implies $\efrak=\eubd$.\smallskip

    \item $\efin=\efrak_n$ for all $n\geq2$.\smallskip

    \item $\efin\geq\sfrak$, where $\sfrak$ donotes the~\emph{splitting number}.\smallskip

    \item $\efrak\leq\cov(\Mwf)\leq\dfrak$. \smallskip

    \item $\efin\leq\non(\Ewf)$.\smallskip

    \item $\add(\Nwf)\leq\efrak$ and $\min\{\efrak,\bfrak\}\leq\add(\Mwf)$.
\end{enumerate}   
\end{lemma}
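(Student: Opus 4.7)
The plan is to verify the six items in order, since they combine combinatorial manipulations of predictors with classical characterizations of cardinals in Cicho\'n's diagram. For \textbf{(1)}, given $F \subseteq \baire$ with $|F| < \min\{\bfrak, \eubd\}$, I would use $|F| < \bfrak$ to pick $b \in \baire$ dominating $F$; then, modifying each $f \in F$ on a finite set (which preserves predictability), we may assume $F \subseteq \prod b$. Since $|F| < \eubd \leq \efrak_b$, there is a $\prod b$-predictor $\pi$ predicting every $f \in F$; extending each $\pi_n$ arbitrarily from $\prod_{k<n}b(k)$ to $\prod_{k<n}\omega$ yields a predictor in $\Pi_\omega$ with the same property. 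The ``moreover'' clause follows by a short inequality chase using the trivial $\efrak \leq \eubd$. Item \textbf{(2)} reduces to a base-change coding: for $n,m \geq 2$, blocks of a fixed length in $m^\omega$ bijectively encode coordinates in $n^\omega$, and predictors translate accordingly; the $\efin \leq \efrak_n$ direction is immediate from the definition. Item \textbf{(3)} uses that, for $F \subseteq 2^\omega$ with $|F|<\sfrak$, the family $\{f^{-1}\{0\} : f \in F\}$ fails to split $\omega$, so on some infinite $D$ each $f \in F$ is almost constant; splitting $F$ by the constant value and iterating builds a predictor supported on a subset of $D$.

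For \textbf{(4)}, the key observation is that for every predictor $\pi$ the set of functions predicted by $\pi$ is meager in $\baire$, so $\efrak \leq \cov(\Mwf)$ follows directly (a meager cover translates into predictors whose predicted sets cover $\baire$), while $\cov(\Mwf) \leq \dfrak$ is folklore. For \textbf{(5)}, the set of $f \in 2^\omega$ predicted by a fixed $\pi \in \Pi_2$ is contained in a closed $\Ewf$-small set, so any family that escapes every $\Ewf$-small set supplies, for each $\pi$, an unpredicted $f$; hence $\efin \leq \non(\Ewf)$.

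The main obstacle is \textbf{(6)}. For $\add(\Nwf) \leq \efrak$, I would invoke Bartoszy\'{n}ski's slalom characterization of $\add(\Nwf)$: any family of size $< \add(\Nwf)$ is localized by a single slalom $\varphi$ with $|\varphi(n)| \leq n$, from which a predictor is built by thinning the slalom to force $|\varphi(n)| = 1$ along an infinite domain $D$ and guessing that unique value at each retained coordinate. For $\min\{\efrak, \bfrak\} \leq \add(\Mwf)$, I would use the characterization of $\add(\Mwf)$ via chopped-real matching: a family of size $< \add(\Mwf)$ admits both a dominating function (controlling the $\bfrak$-part) and, on a sparse enough domain arising from the chopping, a compatible prediction (controlling the $\efrak$-part). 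The delicate technical core of the lemma is this conversion between slaloms or chopped reals and predictors satisfying the exact ``for all but finitely many $n \in D$'' agreement requirement; the other items reduce to essentially routine coding and monotonicity.
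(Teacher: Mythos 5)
The paper does not prove this lemma: it is cited to Brendle's \emph{Evasion and Prediction} with no argument given, so there is no internal proof to compare against. Evaluating your sketch on its own terms, items (2), (3), (5), and the second half of (6) are essentially right (modulo small inefficiencies: in (3) you do not need to split $F$ by constant value and iterate; once each $f\in F$ is eventually constant on the unsplit infinite set $D$, the single predictor $\pi_n(\sigma)=\sigma(d)$, $d$ the largest element of $D$ below $n$, already works; in (5) the predicted set is $F_\sigma$, not closed, but it is still a countable union of closed null sets, hence in $\Ewf$; and $\min\{\efrak,\bfrak\}\le\add(\Mwf)$ follows at once from item (4) together with the Miller--Truss theorem $\add(\Mwf)=\min\{\bfrak,\cov(\Mwf)\}$). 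In (1) the phrase ``modifying each $f$ on a finite set'' is not quite right as stated, since the modifications vary with $f$ and a single predictor for the modified family would not automatically transfer back; the standard fix is to modify \emph{uniformly} via truncation, $\hat f(n):=\min\{f(n),b(n)-1\}$, and then compose the $\prod b$-predictor with the truncation map to obtain a predictor in $\Pi_\omega$.

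There are, however, two genuine gaps. In item (4), your key observation --- that for each predictor $\pi$ the set of $f$ predicted by $\pi$ is meager --- is correct, but the conclusion drawn from it is backwards. ``Predicted sets are meager'' yields $\efrak\le\non(\Mwf)$ (a nonmeager set must escape each predicted set) and, on the dual side, $\cov(\Mwf)\le\mathfrak{pr}$; it does \emph{not} translate an arbitrary meager cover into a cover by predicted sets, and even if it did that would bound $\mathfrak{pr}$ (the dominating side), not $\efrak$. The inequality $\efrak\le\cov(\Mwf)$ is a different, Blass-type argument which crucially exploits that the range $\omega$ is unbounded (so that at any $D$-coordinate one can choose a value disagreeing with $\pi$); it is not a formal consequence of meagerness of predicted sets and your sketch does not reach it.

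In the first half of item (6), the step ``thinning the slalom to force $|\varphi(n)|=1$ along an infinite domain $D$'' cannot be made to work: a slalom of width $1$ on an infinite set already determines the values of every localized $f$ on that set, so a family of size $\geq 2$ with two functions differing infinitely often on $D$ cannot be localized this way, and Bartoszy\'nski's characterization of $\add(\Nwf)$ requires the width $h$ to tend to infinity. The actual route is either the linear-algebra/dimension-counting argument the paper carries out for \autoref{evanna} (group coordinates into blocks $I_n$ of length $n+1$, localize the block-valued function by a width-$n$ slalom, pick a nonzero functional annihilating $\varphi(n)$, and read off the prediction of $f(d_n)$ from $f{\upharpoonright}d_n$), giving $\add(\Nwf)\le\non(\NAwf)=\minLc\le\eubd$, and then combining with $\add(\Nwf)\le\bfrak$ and item (1); the naive slalom-to-singleton reduction does not capture how the history $f{\upharpoonright}n$ is being used.
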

Here, given an ideal $\Iwf\subseteq\Pwf(X)$ containing all singletons, let
\begin{align*}
 \add(\Iwf)&=\min\set{|\Jwf|}{\Jwf\subseteq\Iwf,\,\bigcup\Jwf\notin\Iwf};\\
 \cov(\Iwf)&=\min\set{|\Jwf|}{\Jwf\subseteq\Iwf,\,\bigcup\Jwf=X};\\
 \non(\Iwf)&=\min\set{|A|}{A\subseteq X,\,A\notin\Iwf};\\
 \cof(\Iwf)&=\min\set{|\Jwf|}{\Jwf\subseteq\Iwf,\ \forall\, A\in\Iwf\ \exists\, B\in \Jwf\,(A\subseteq B)}.
\end{align*}
$\dfrak$ denotes, as usual the \emph{dominating number}, that is, the size of the
smallest family of functions $D\subseteq\baire$ such that for all $f\in\baire$ there is $g\in D$ with
$g(n)\geq f(n)$ for almost all $n$. In future, we shall abbreviate \emph{for almost all $n$} by $\forall^\infty n$, and write $f\leq^* g$ for $\forall^\infty n\, (f(n)\leq g(n))$. Call a family
of functions $F\subseteq\baire$ unbounded if for all $f\in\baire$  there is $g\in F$ which lies infinitely often above $f$. Then
\[\bfrak=\min\set{F\subseteq\baire}{F \text{\ is unbounded}}\]
is the \emph{unbounding number} which is dual to $\dfrak$. Finally, denote by $\Nwf$ and $\Mwf$ the $\sigma$-ideals of Lebesgue measure sets and of meager sets of~$\cantor$, respectively, and let $\Ewf$ be the ideal generated by $F_\sigma$ measure zero subsets of $\cantor$.
\newcommand{\NAwf}{\Nwf\!\Awf}

We complete this brief overview of ZFC results with another result of Brendle concerning the relationship between $\eubd$ and $\non(\NAwf)$ where $\NAwf$ denotes the $\sigma$-ideal of the null-additive subsets of $\cantor$ (see~\autoref{Def:nullad}). Namely, he states $\non(\NAwf)\leq\eubd$, but the proof
does not appear anywhere. For completeness, we offer a proof of this inequality in~\autoref{zfc-resul}  

As to consistency results, in~\cite{BrJ}, Brendle and Judad presented the property that helps us force $\bfrak$ small. Given a partial order $\Por$, a function $h\colon\Por\to\omega$ is a \emph{height function} iff $q\leq p$ implies $h(q)\geq h(p)$. A pair $\la\Por,h\ra$ has the property~\eqref{BrJprop} iff:
\begin{equation}
 \parbox{0.8\textwidth}{
 given a maximal antichain $\set{p_n}{n\in\omega}\subseteq\Por$ and $m\in\omega$, there is an $n\in\omega$ such that: whenever $p$ is incompatible with $\set{p_j}{j\in n}$ then $h(p)>m$.  
}
\tag{\faPagelines}
\label{BrJprop}
\end{equation}
They established that FS (finite support) iterations of ccc posets that fulfill~(\ref{BrJprop}) do not add dominating reals (so they do not increase $\bfrak$ small). Later, Brendle~\cite{BrendlevasionI} enhanced~\eqref{BrJprop} to prove the consistency of $\efrak<\eubd$. He showed that FS iterations of ccc posets with the property~\eqref{Breva} do not add predictors (so they do not increase $\efrak$) nor add dominating reals. We say that a pair $\la\Por,h\ra$ has the property~\eqref{Breva} iff:
\begin{equation}
 \parbox{0.8\textwidth}{
\begin{enumerate}[label=\rm (\Roman*)]
    \item given a $p\in\Por$, a maximal antichain $\set{p_n}{n\in\omega}\subseteq\Por$ of conditions below $p$ and $m\in\omega$, there is an $n\in\omega$ such that: whenever $q\leq p$ is incompatible with $\set{p_j}{j\in n}$ then $h(p)>m$; and \smallskip
    
    \item if $p, q\in\Por$ are compatible, then there is $r\leq p,q$ so that $h(r)\leq h(p)+h(q)$.
\end{enumerate}
}
\tag{\faThemeisle}
\label{Breva}
\end{equation}

Drawing inspiration from the property labeled as~(\ref{Breva}), the objective of this note is to establish a class of partially ordered sets (posets) referred to as \emph{$\sigma$-soft-linked}~(\autoref{maindef}). This class encompasses widely recognized types of forcing notions such as eventually different real forcing, meager forcing,  Dirichlet–Minkowski forcing, and others. Furthermore, we will provide evidence that any poset in this category, which is definable, maintains a small value of $\efrak$. Additionally, we will demonstrate that our notion is connected to the concept of ``Fr\'echet-linked in $\Por$" (abbreviated as $\Fr$-linked) as introduced by Mej\'ia~\cite{mejvert}.

\noindent\textbf{Notation.} A~\emph{forcing notion} is a pair $\la\Por,\leq\ra$ where $\Por\neq\emptyset$ and $\leq$ is a relation on~$\Por$ that satisfies reflexivity and 
transitivity. We also use the expression pre-ordered set (abbreviated p.o.\ set or just poset) to refer to a forcing notion. The elements of $\Por$ are called conditions and we say that a condition $q$ is \emph{stronger} than a condition $p$ if $q\leq p$.

\begin{definition}
Let $\Por$ be a forcing notion. 
\begin{enumerate}
    \item Say that $p, q\in\Por$ are \emph{compatible} (in $\Por$), denoted by $p\,\|_\Por\,q$, if $\exists r\in\Por\,(r\leq p\textrm{\ and\ }r\leq q)$. Say that $p, q\in\Por$ are \emph{incompatible} (in $\Por$) if they are not compatible in $\Por$, which is
denoted by $p\perp_\Por q$. 

When $\Por$ is clear from the context, we just write $p\,\|\,q$ and $p\perp q$.\smallskip

    \item Say that $A\subseteq\Por$ is an \emph{antichain} if $\forall p, q\in\Por\,(p\neq q\Rightarrow p\perp q)$. $A$ is a \emph{maximal antichain} on $\Por$ iff $A$ is an antichain and $\forall p\in\Por\,\exists q\in A\,(p\,\|_\Por\,q)$.\smallskip
    
    \item Say that $D\subseteq\Por$ is \emph{dense} (in $\Por$) if $\forall p\in\Por\,\exists q\in D\,(q\leq p)$.\smallskip

    \item Say that $G\subseteq\Por$ is a \emph{$\Por$-filter} if it satisfies
    \begin{enumerate}
        \item $G\neq\emptyset$;

        \item for all $p, q\in G$ there is some $r\in G$ such that $r\leq p$ and $r\leq q$; and 

        \item if $p\in\Por$, $q\in G$ and $q\leq p$, then $p\in G$.\smallskip
    \end{enumerate}

    \item Let $\mathcal D$ be a family of dense subsets of $\Por$. Say that $G\subseteq\Por$ is \emph{$\Por$-generic over $V$} if $G$ is
a $\Por$-filter and $\forall D\in\mathcal D\,(G\cap D\neq\emptyset)$. Denote by $\dot G_\Por$ the canonical name of the generic set. When $\Por$ is clear from the context, we just
write  $\dot G$.
\end{enumerate}
\end{definition}

\begin{fact}[{\cite{Gold}}]\label{basfor}
  Let $\Por$ be a forcing notion. Let $p, q\in\Por$. 
   \begin{enumerate}[label=\rm(\arabic*)]
\item $p\perp q$ iff $q\Vdash p\notin\dot G$.\smallskip

\item $G\subseteq\Por$ is a $\Por$-generic over $V$ iff for every maximal antichain $A\in V$, $|G\cap A|=1$.
   \end{enumerate}
\end{fact}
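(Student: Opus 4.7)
The plan is to verify each part separately; both are entirely standard forcing-theoretic manipulations, and nothing from the rest of the paper is needed.

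For part~(1), I would prove both directions by contraposition. For the easy direction, suppose $p\,\|\,q$ and fix $r\leq p,q$. From $r\leq p$ we get $r\Vdash p\in\dot G$, while from $r\leq q$ and the assumption $q\Vdash p\notin\dot G$ we would also have $r\Vdash p\notin\dot G$, a contradiction. For the other direction, assume $q\not\Vdash p\notin\dot G$ and pick $r\leq q$ with $r\Vdash p\in\dot G$. The key observation is that $\set{s\in\Por}{s\leq p}$ is dense below $r$: given any $s\leq r$, extend $s$ to a $\Por$-generic filter $H$ over a countable collection of dense sets; since $r\in H$ forces $p\in\dot G$, we have $p\in H$, and the filter property of $H$ produces $t\in H$ with $t\leq s$ and $t\leq p$. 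Specializing to $s=r$ yields $t\leq r\leq q$ with $t\leq p$, contradicting $p\perp q$.

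For part~(2), the forward direction combines two standard points: a maximal antichain $A$ is predense, so its downward closure is dense, and genericity of $G$ gives $G\cap A\neq\emptyset$; moreover, distinct elements of $A$ are pairwise incompatible while elements of a filter are pairwise compatible, so $|G\cap A|\leq1$. For the converse, assume $G$ is a $\Por$-filter meeting every maximal antichain $A\in V$ in exactly one point, and let $D\in V$ be dense. Working inside $V$, Zorn's lemma applied to the poset of antichains contained in $D$ produces a maximal such antichain $A\subseteq D$; density of $D$ together with the maximality of $A$ inside $D$ upgrades $A$ to a maximal antichain of $\Por$, since any $p\in\Por$ has a refinement $d\leq p$ in $D$, and by maximality of $A$ in $D$ the condition $d$ is compatible with some $a\in A$, hence so is $p$. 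By hypothesis $G\cap A$ is a singleton, whence $G\cap D\supseteq G\cap A\neq\emptyset$, finishing the proof.

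The only step deserving a moment of care is the density claim in part~(1), which implicitly invokes the forcing theorem (or, in the countable transitive model setup, the existence of $\Por$-generic filters containing any prescribed condition). Everything else is routine filter and antichain bookkeeping, so I would not expect any genuine obstacle.
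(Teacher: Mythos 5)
Your treatment of part~(1) is correct, though somewhat roundabout: once you have $r\le q$ with $r\Vdash p\in\dot G$, you can pass directly to a $\Por$-generic $H\ni r$ (in the usual countable-transitive-model setting), conclude $p,q\in H$ from upward closure and the truth lemma applied to $r\Vdash p\in\dot G$, and then let the filter property of $H$ produce a common extension of $p$ and $q$, contradicting $p\perp q$. Your detour through the density of $\set{s}{s\le p}$ below $r$ essentially re-derives the syntactic form of the forcing relation for $p\in\dot G$; it works, but it carries more machinery than the statement needs.

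The converse direction of part~(2) has a genuine gap. By the paper's definition, ``$G$ is $\Por$-generic over $V$'' already includes the requirement that $G$ be a $\Por$-filter. Your converse begins with ``assume $G$ is a $\Por$-filter meeting every maximal antichain ...''; but the actual hypothesis for this direction is only that $G\subseteq\Por$ and $|G\cap A|=1$ for every maximal antichain $A\in V$. You must \emph{derive} the filter properties from the antichain hypothesis, not assume them; otherwise the stated equivalence is not established. This is fixable along the same lines you already use: for upward closure, suppose $q\in G$, $q\le p$, $p\notin G$, and extend $\{p\}$ to a maximal antichain $A\in V$; the unique $a\in G\cap A$ satisfies $a\ne p$, hence $a\perp p$, hence $a\perp q$; extending $\{a,q\}$ to a maximal antichain $B\in V$ yields $a,q\in G\cap B$, contradicting $|G\cap B|=1$. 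For downward directedness, given $p,q\in G$, the same trick applied to $\{p,q\}$ shows $p\,\|\,q$; then run your dense-set argument on the dense set $\set{s}{(s\le p\wedge s\le q)\text{ or }s\perp p\text{ or }s\perp q}\in V$: the unique element of $G$ in a maximal antichain refining it cannot be incompatible with either $p$ or $q$ (that would again double up a maximal antichain against $G$), so it lies below both. Once the filter property is secured, the argument you gave for meeting every dense set goes through verbatim.
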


Define the order $\leq^*$ in $\Por$ as $q\leq^*p$ iff $\forall r\leq q\,(r\parallel p)$, i.e, $p\leq^* q\Leftrightarrow\{q\}$~is predense below~$p$. $\Pbb$~is separative if and only if ${\leq^*}$ equals~${\le}$. For $p\in\Pbb$ let $\Pbb|p=\set{q\in\Pbb}{q\le p}$.

We now review some basic notation about relational systems. A~\emph{relational system} is a triple~$\Rbf=\la X, Y, R\ra$ where $R$ is a relation and $X$ and $Y$ are non-empty sets. Such a relational system has two cardinal characteristics associated with it:
\begin{align*}
    \bfrak(\Rbf)&:=\min\set{|F|}{F\subseteq X\text{\ and }\neg\exists y\in Y\, \forall x\in F\colon x R y},\\
    \dfrak(\Rbf)&:=\min\set{|D|}{D\subseteq Y\text{\ and }\forall x\in X\, \exists y\in D\colon x R y}.
\end{align*}
Given another relational system $\Rbf'=\la X',Y',R'\ra$,  say that a pair $(\Psi_-,\Psi_+)\colon\Rbf\to\Rbf'$ is a \emph{Tukey connection from $\Rbf$ into $\Rbf'$} if 
 $\Psi_-\colon X\to X'$ and $\Psi_+\colon Y'\to Y$ are functions such that  $\forall\, x\in X\ \forall\, y'\in Y'\colon \Psi_-(x) R' y'$ then $ x R \Psi_+(y')$. Say that $\Rbf$ is \emph{Tukey below} $\Rbf'$, denoted by $\Rbf\leqT\Rbf'$, if there is a Tukey connection from $\Rbf$ to $\Rbf'$. 
 Say that $\Rbf$ is \emph{Tukey equivalent} to $\Rbf'$, denoted by $\Rbf\eqT\Rbf'$, if $\Rbf\leqT\Rbf'$ and $\Rbf'\leqT\Rbf$. It is well-known that $\Rbf\leqT\Rbf'$ implies $\bfrak(\Rbf')\leq\bfrak(\Rbf)$ and $\dfrak(\Rbf)\leq\dfrak(\Rbf')$. Hence, $\Rbf\eqT\Rbf'$ implies $\bfrak(\Rbf')=\bfrak(\Rbf)$ and $\dfrak(\Rbf)=\dfrak(\Rbf')$.

For instance, the cardinal characteristics associated with an ideal can be characterized by relational systems.

\begin{example}\label{exm:Iwf}
For $\Iwf\subseteq\pts(X)$, define the relational systems: 
\begin{enumerate}[label=(\arabic*)]
    \item $\Iwf:=\la\Iwf,\Iwf,\subseteq\ra$, which is a directed preorder when $\Iwf$ is closed under unions (e.g.\ an ideal).
    
    \item $\Cbf_\Iwf:=\la X,\Iwf,\in\ra$.
\end{enumerate}
It is well-known that, whenever $\Iwf$ is an ideal on $X$ containing $[X]^{<\aleph_0}$,
\begin{multicols}{2}
\begin{enumerate}[label= \rm (\alph*)]
    \item $\bfrak(\Iwf)=\add(\Iwf)$. 
    
    \item $\dfrak(\Iwf)=\cof(\Iwf)$.
    
    \item $\bfrak(\Cbf_\Iwf)=\non(\Iwf)$.

    \item $\dfrak(\Cbf_\Iwf)=\cov(\Iwf)$. 
\end{enumerate}
\end{multicols}
\end{example}

Here, as usual, given a formula $\phi$, $\exists^\infty\, n<\omega\colon \phi$ means that infinitely many natural numbers satisfy $\phi$.

Lastly, for $x\in\Ror$ let $\parallel x\parallel$ denote the distance of $x$ to the nearest integer.

\section{Evasion number and localization}\label{zfc-resul}

This section aims to prove the following theorem:

\begin{theorem}[{\cite{BrendlevasionI}}]\label{evanna}
$\non(\NAwf)\leq\eubd$.
\end{theorem}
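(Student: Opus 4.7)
The plan is to show $\non(\NAwf) \leq \efrak_b$ for every $b \in \omega^\omega$ with $b(n) \to \infty$; since $\eubd$ is the infimum of $\efrak_b$ over such $b$ (the bounded case is reduced to $b \equiv 2$ by monotonicity $\efrak_b \geq \efrak_2 = \efin$, together with a parallel argument carried out directly in $\cantor$), this yields the theorem. Set $h(n) := \lceil \log_2 b(n) \rceil$ and $m_n := \sum_{j < n} h(j)$, so that the blocks $[m_n, m_{n+1})$ partition $\omega$. Encode each $f \in \prod b$ as $x_f \in \cantor$ by letting $x_f\restrictto{[m_n,m_{n+1})}$ be the $h(n)$-bit binary expansion of $f(n)$. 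Given $F \subseteq \prod b$ with $|F| < \non(\NAwf)$, the set $X := \{x_f : f \in F\}$ is null-additive.

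I then invoke Shelah's well-known combinatorial characterization of $\NAwf$: $X \in \NAwf$ iff for every increasing $\ell : \omega \to \omega$ there exist an interval partition $\{I_k : k < \omega\}$ of $\omega$ with $|I_k| = \ell(k)$ and slaloms $\langle J_k : k < \omega \rangle$, $J_k \subseteq 2^{I_k}$, $|J_k| \leq k$, such that $\forall x \in X\,\forall^\infty k\,(x\restrictto{I_k} \in J_k)$. I choose $\ell$ recursively so that each $I_k$ fully contains at least $k + 1$ of the coding blocks $[m_n, m_{n+1})$; setting $N_k := \{n : [m_n, m_{n+1}) \subseteq I_k\}$, this guarantees $|N_k| \geq k + 1$ for all sufficiently large $k$. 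Decoding $J_k$ on the blocks indexed by $N_k$ produces $\tilde{J}_k \subseteq \prod_{n \in N_k} b(n)$ with $|\tilde{J}_k| \leq k$.

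Organize $\tilde{J}_k$ as a rooted tree $\tau_k$ whose levels are indexed by $N_k$ in increasing order: since $\tau_k$ has at most $k$ leaves, at most $k - 1$ of its levels contain a node with more than one child. Because $|N_k| > k - 1$, some level $n^*_k \in N_k$ is non-branching, meaning every node at that level has a unique child. Define the predictor $\pi := (D, \langle \pi_n : n \in D \rangle)$ by $D := \{n^*_k : k \geq k_0\}$ and $\pi_{n^*_k}(s) := t(n^*_k)$ for any $t \in \tilde{J}_k$ extending $s\restrictto{N_k \cap [\min N_k, n^*_k)}$ (well-defined by non-branching at level $n^*_k$; output anything if no such $t$ exists). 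For every $f \in F$, the inclusion $x_f\restrictto{I_k} \in J_k$ holds for almost all $k$, whence $f\restrictto{N_k} \in \tilde{J}_k$ and $\pi_{n^*_k}(f\restrictto{n^*_k}) = f(n^*_k)$ for all such $k$; hence $\pi$ predicts $f$, giving $\efrak_b > |F|$ and therefore $\non(\NAwf) \leq \efrak_b$. The main obstacle is calibrating $\ell$ against the possibly irregular bit-widths $h(n)$: one must solve a fixed-point style condition of the form $\ell(k) \geq (k + 2) \cdot h^*(\sum_{j \leq k} \ell(j))$, where $h^*$ is a nondecreasing majorant of $h$, which is what guarantees $|N_k| \geq k + 1$ uniformly. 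The tree-branching count yielding the non-branching level $n^*_k$ is the central combinatorial observation.
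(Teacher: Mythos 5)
Your proof is correct, and it takes a genuinely different route from the paper's. The paper works through Pawlikowski's theorem $\non(\NAwf)=\min\set{\blc_{b,\id_\omega}}{b\in\baire}$ and then establishes a Tukey connection $\mathbf{E}_b\leqT\Lc_0(c_b,\id_\omega)$, where $c_b(n)=\prod_{i\in I_n}b(i)$ is built from a partition of $\omega$ into intervals $I_n$ of length $n+1$. The combinatorial heart of the paper's argument is \emph{linear algebra}: regarding the slalom value $\varphi(n)$ (of size $\le n$) as a subset of the $(n+1)$-dimensional vector space $\mathbb{Q}^{I_n}$, one finds a nonzero functional $a^n$ annihilating $\varphi(n)$, and the coordinate $d_n=\max\set{i\in I_n}{a^n_i\neq 0}$ is then \emph{linearly} determined by its predecessors, which is exactly what a predictor needs. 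You instead encode $\prod b$ into $\cantor$ via binary blocks, invoke Shelah's combinatorial characterization of null-additivity directly (the paper uses the derived cardinal identity of Pawlikowski), and find the determined coordinate by a \emph{tree-branching count}: a tree with $\le k$ leaves has $\le k-1$ branching levels, so among $\ge k+1$ levels some level is non-branching, and the value there is determined by the earlier ones. Both arguments are instances of the same pigeonhole pattern ($k$ objects in $k+1$ slots force one slot to be determined), but yours finds a purely combinatorial, not necessarily linear, dependency; this makes your argument more elementary (no embedding into $\mathbb{Q}$-vector spaces, no division), and it is perhaps slightly more robust conceptually, at the price of the extra bookkeeping with the encoding into $\cantor$. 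The paper's version is more compact and fits the relational-system/Tukey framework used throughout.

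Two small remarks. First, the calibration of $\ell$ against the bit-widths is simpler than a fixed-point condition: since $M_k=\sum_{j<k}\ell(j)$ and the block endpoints $m_n$ are determined before $\ell(k)$ is chosen, one just sets $\ell(k):=\max\bigl(\ell(k-1)+1,\ m_{n_0+k+1}-M_k\bigr)$ where $n_0=\min\set{n}{m_n\ge M_k}$; the right-hand side does not involve $\ell(k)$, so no self-reference arises. Second, your proof actually goes through uniformly for all $b\in\baire$ (bounded or not), since for bounded $b$ the bit-widths $h(n)$ are bounded and the recursion for $\ell$ is even easier; the separate reduction of the bounded case to $b\equiv 2$ is not needed.
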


On the other hand, recently, the author, along with Diego Mej\'ia and Ismael Rivera-Madrid~\cite{CMR2} proved that $\add(\NAwf)=\non(\NAwf)$, so we obtain $\add(\NAwf)\leq\eubd$. Before entering into the details of the proof, we review some basic notions and notation:

\begin{definition}\label{Def:nullad}
A set $X\subseteq2^\omega$ is termed \emph{$\Nwf$-additive} if,  $A+X\in\Nwf$ for every $A\in\Nwf$. Denote by $\NAwf$ the collection of the $\Nwf$-additive subsets of $\cantor$. Notice that $\NAwf$ is a $\sigma$-ideal and $\NAwf\subseteq\Nwf$.
\end{definition}

We now introduce some terminology associated with slaloms. 

\begin{definition}\label{defloc}
Given a sequence of non-empty sets $b = \seq{b(n)}{n\in\omega}$ and $h\colon \omega\to\omega$, define 
\begin{align*}
 \prod b &:= \prod_{n\in\omega}b(n),  \\
 \Swf(b,h) &:= \prod_{n\in\omega} [b(n)]^{\leq h(n)}.
\end{align*}
For two functions $x\in\prod b$, $\varphi\in\Swf(b,h)$ and $w\in[\omega]^{\aleph_0}$, we define

\begin{enumerate}
    \item $x\,\in^*\varphi\textrm{\ iff\ }\forall^\infty n\in\omega\,(x(n)\in \varphi(n))$,\smallskip

    \item $x\in^{\circ}(\varphi,w)$ iff $\forall^\infty n\in w\,(x(n)\in \varphi(n))$.
\end{enumerate}
We define the \emph{localization cardinal} 
\[\blc_{b,h}=\min\set{|F|}{F\subseteq \prod b\;\&\;\neg\exists \varphi \in \Swf(b,h)\,\forall x \in F\,(x\in^* \varphi)},\]
and a \emph{variant of the localization cardinal}
\[\blcp_{b,h}=\min\set{|F|}{F\subseteq \prod b\;\&\;\neg\exists (\varphi,w) \in \Swf(b,h)\times[\omega]^{\aleph_0}\,\forall x \in F\,(x\in^{\circ}(\varphi,w))}.\]
It is not hard to see that $\blc_{b,h}\leq\blcp_{b,h}$ holds. Let  $\minLc:=\min\set{\blc_{b,\id_\omega}}{b\in\baire}$ where $\id_\omega$ denotes the identity function on $\omega$.
\end{definition}

Pawlikowski provided a characterization of the uniformity of $\NAwf$ in terms of localization cardinals. Namely, he proved:

\begin{theorem}[{\cite{paw85}}]\label{thm:paw}
 $\non(\NAwf)=\minLc$.    
\end{theorem}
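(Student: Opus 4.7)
The proof rests on the Bartoszy\'nski--Shelah combinatorial characterization of null-additive sets, which I would take as a black box: $X \subseteq \cantor$ lies in $\NAwf$ iff for every $f \in \baire$ with $f(n) \to \infty$ there exist an interval partition $\seq{I_n}{n<\omega}$ of $\omega$ and sets $S_n \subseteq 2^{I_n}$ with $|S_n| \leq f(n)$ such that every $x \in X$ satisfies $x \upharpoonright I_n \in S_n$ for almost all $n$. This is the point at which null-additivity becomes a purely slalom-theoretic property, and it is the natural bridge to $\minLc$.

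For $\non(\NAwf) \geq \minLc$, I would take $X \subseteq \cantor$ with $|X| < \minLc$ and verify the characterization. Given $f \to \infty$ (WLOG $\id \leq f$), fix an interval partition $(I_n)$ with $|I_n| = n$, let $b(n) := 2^{|I_n|}$, and encode each $x \in X$ as $\hat x \in \prod b$ via $\hat x(n) = $ integer value of $x \upharpoonright I_n$. Since $|\hat X| \leq |X| < \minLc \leq \blc_{b,\id}$, some $\varphi \in \Swf(b,\id)$ catches $\hat X$; decoding each $\varphi(n) \subseteq b(n)$ back into $S_n \subseteq 2^{I_n}$ yields $|S_n| \leq n \leq f(n)$, and the capturing condition transfers from $\hat X$ to $X$.

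For $\non(\NAwf) \leq \minLc$, I would fix $b \in \baire$ realizing $\blc_{b,\id} = \minLc$ and take $F \subseteq \prod b$ of size $\blc_{b,\id}$ not caught by any slalom in $\Swf(b,\id)$. Encode $F$ into $\hat F \subseteq \cantor$ using consecutive blocks $E_n$ of length $\ell(n) \geq \lceil \log_2 b(n) \rceil$, chosen to grow rapidly, and assume for contradiction $\hat F \in \NAwf$. Applying the characterization with a slowly growing $f$ furnishes an interval partition $(I_n)$ and slaloms $(S_n)$ catching $\hat F$. On
\[ w := \set{n < \omega}{E_n \subseteq I_{k(n)} \text{ for some } k(n)}, \]
extract $\varphi(n) \subseteq b(n)$ from $S_{k(n)}$ by projecting onto $E_n$ and decoding; then $|\varphi(n)| \leq |S_{k(n)}| \leq f(k(n))$, which can be forced below $n$ by tuning $\ell$ against $f$. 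This produces a pair $(\varphi,w) \in \Swf(b,\id) \times [\omega]^{\aleph_0}$ catching $F$ in the $\in^\circ$ sense of $\blcp_{b,\id}$; contradicting the choice of $F$ then reduces to either $\blc_{b,\id} = \blcp_{b,\id}$ for a well-chosen $b$ (for instance by filling $\varphi$ arbitrarily outside $w$) or to taking $\min_b$ across both notions.

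The main obstacle is this second direction: Shelah's partition $(I_n)$ is not under our control, so the encoding blocks $(E_n)$ and the test function $f$ must be calibrated so that (i) infinitely many $E_n$ fall inside a single $I_{k(n)}$ (ensuring $w$ is infinite), and (ii) the pulled-back slalom width stays inside $\id$ rather than drifting into a larger class. This is precisely why the paper introduces $\blcp$ alongside $\blc$: the variant that permits passage to an infinite $w$ is the clean combinatorial target of the extraction step, and its equivalence with $\blc$ (modulo the $\min_b$) is what lets the argument close.
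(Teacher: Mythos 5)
The paper does not prove this theorem; it cites it as Pawlikowski's result, so there is no internal proof to compare your argument against, and I evaluate it on its own terms. The central issue is the quantifier on the interval partition in the combinatorial characterization of $\NAwf$. Shelah's characterization is that $X\in\NAwf$ iff \emph{for every} interval partition $\seq{I_n}{n<\omega}$ of $\omega$ there exist $S_n\subseteq 2^{I_n}$ with $|S_n|\le n$ such that every $x\in X$ has $x{\upharpoonright}I_n\in S_n$ for almost all $n$. You have instead quantified the partition existentially, which is a strictly weaker property and does not characterize null-additivity.

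This slip happens to be benign in your first inequality $\non(\NAwf)\ge\minLc$ (you choose a partition which, under the universal quantifier, would simply be given to you; the same encoding works for any partition). It derails the second. Believing the partition is handed to you rather than chosen by you, you are driven into the extraction-over-$w$ maneuver, and what you finally produce is a pair $(\varphi,w)\in\Swf(b,\id_\omega)\times[\omega]^{\aleph_0}$ catching $F$ in the $\in^\circ$ sense. That contradicts unboundedness with respect to $\blcp_{b,\id_\omega}$, not with respect to $\blc_{b,\id_\omega}$; closing the argument would require $\blcp_{b,\id_\omega}\le\blc_{b,\id_\omega}$, and neither you nor the paper establishes that---the paper records only the easy inequality $\blc_{b,h}\le\blcp_{b,h}$, and it introduces $\blcp$ solely to phrase the Tukey connection $\mathbf{E}_b\leqT\Lc_0(c_b,\id_\omega)$ in the proof that $\non(\NAwf)\le\eubd$, not to support the present theorem. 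You flag the hole yourself, but it remains a hole. With the correct universal quantifier both the detour and the gap vanish: take the partition to be exactly your encoding blocks $(E_n)$, obtain $S_n\subseteq 2^{E_n}$ with $|S_n|\le n$ directly from the characterization, decode each $S_n$ to $\varphi(n)\in[b(n)]^{\le n}$, and conclude $x\in^*\varphi$ for every $x\in F$, contradicting the choice of $F$ with no appeal to $\blcp$ at all.
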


On the other hand, it was proved that $\minLc=\min\set{\blc_{b,h}}{b\in\baire}$ when $h$ goes to infinity in \cite[Lemma~3.8]{CM}. Therefore, $\non(\NAwf)=\min\set{\blc_{b,h}}{b\in\baire}$. 


\newcommand{\dlcp}{\mathfrak{d}^{\mathrm{Lc_0}}}
 \begin{definition}
Let $b, h$ be as in~\autoref{defloc}.
\begin{enumerate}[label=\rm(\arabic*)]
    \item Define $\Lc(b,h):=\la\prod b,\Scal(b,h),\in^*\ra$ ($\Lc$ stands for \emph{localization}), which is a relational system. Notice that $\blc_{b,h}:=\bfrak(\Lc(b,h))$ and $\dlc_{b,h}:=\dfrak(\Lc(b,h))$.

    \item Define $\Lc_0(b,h):=\la\prod b,\Scal(b,h)\times[\omega]^{\aleph_0},\in^\circ\ra$, which is a relational system. Put $\blcp_{b,h}:=\bfrak(\Lc_0(b,h))$ and $\dlcp_{b,h}:=\dfrak(\Lc_0(b,h))$.

    \item Assume $b\in((\omega+1)\smallsetminus2)^\omega$. Define $\mathbf{E}_b=\la\prod b,\Pi_b,\sqsubset^{\textbf{pr}}\ra$ ($\mathbf{E}_b$ stands for \emph{unbounded evasion}), which is a relational system. So it is clear that $\bfrak(\mathbf{E}_b)=\efrak_b$ and $\dfrak(\mathbf{E}_b)=\mathfrak{pr}_b$ ($\mathfrak{pr}_b$ is dual to $\efrak_b$ in a natural sense). 
\end{enumerate}
\end{definition}

\begin{remark}\label{Rem:Rb}
 Notice that, for a fixed $\pi\in\Pi_b$, $\set{f\in\prod b}{f\sqsubset^{\textbf{pr}}\pi}$ is meager set whenever $b\in((\omega+1)\smallsetminus2)^\omega$. Consequently, $\Cbf_\Mwf\leqT\mathbf{E}_b$, which implies that $\bfrak(\mathbf{E}_b)\leq\non(\Mwf)$ and $\cov(\Mwf)\leq\dfrak(\mathbf{E}_b)$.
\end{remark}

\begin{proof}[Proof of~\autoref{evanna}]
In view of~\autoref{thm:paw}, it suffices to prove~$\minLc\leq\eubd$. 
Let $b\in\baire$. Define $c_b\in\baire$ by $c_b(n)=\prod_{i\in I_n}b(i)$ where $I_n:=[\frac{n(n+1)}{2},\frac{n(n+1)}{2}+n]$. Note that $\seq{I_n}{n\in\omega}$ is a partition of $\omega$ into adjacent intervals such that the $n$-th interval $I_n$ has size $n+1$. Hence, it is enough to prove that $\mathbf{E}_b\leqT\Lc_0(c_b,\id_\omega)$.  To see this, we have to find maps $\Psi_-\colon\prod b\to\prod c_b$ and $\Psi_+\colon\Swf(c_b,\id_\omega)\to\Pi_b$ such that, for any $x\in\prod b$ and for any $\varphi\in\Swf(c_b,\id_\omega)$, $\Psi_-(x)\in^*\varphi$ implies $x=^{\textbf{pr}}\Psi_+(\varphi)$.

Let $x\in F$. Define $y_x\in \prod c_b$ by $y_x(n)=x{\upharpoonright}I_n$ for each $n\in\omega$. Put $\Psi_-(x)=y_x$. On the other hand, for $\varphi\in\Swf(c_b,\id_\omega)$ and for $w\in[\omega]^{\aleph_0}$. We can interpret $\varphi(n)$ as a~subset of the $(n+1)$-dimensional $Q$-vector space $Q^{I_n}$. Since $|\varphi(n)|=n$, there is a non-zero functional $a^n\colon Q^{I_n}\to Q$ which annihilates every vector in $\varphi(n)$. For $i\in I_n$ let $e_i\in Q^{I_n}$ be the unit vector defined by $e_i(j)=\delta_{i,j}$ (Kronecker), let $a_i^n=a^n(e_i)$ and let $d_n=\max\set{i\in I_n}{a^n_i\neq0}$. Then it is clear that for every $x\in F$ and $n\in w$ with $y_x(n)\in\varphi(n)$, rewriting $a^n(y_x(n))=0$, we obtain
 \[x(d_n)=-\frac{1}{d_n}\sum_{i\in I_n\cap d_n}a_i^n\cdot x(i).\]
 Now, define a predictor $\Psi_+(\varphi)=\seq{\pi_d}{d\in D}$ by 
 \[D:=\set{d_n}{n\in\omega}\textrm{\ and\ }\pi_d(\sigma)=-\frac{1}{d_n}\sum_{i\in I_n\cap d_n}a_i^n\cdot \sigma(I).\]
It is not hard to see that $y_x\in^*\varphi$ implies $x=^{\textbf{pr}}\pi$.
\end{proof}

\section{Soft-linkedness}

Within this section, our primary objective is to formalize the approach of Brendle and Judad in order to introduce the central concept of this study, called “$\mathrm{leaf}$-linked”. Additionally, we will investigate the correlation between $\mathrm{leaf}$-linked and $\Fr$-linked, as demonstrated in~\autoref{SoftFr}.

\subsection{Soft-linkedness}
\

We now proceed to acquaint our linkedness property.

\begin{definition}\label{maindef}
  Let $\Por$ be a forcing notion and $\theta$ be a cardinal number.
\begin{enumerate}[label=\rm (\arabic*)]
    \item A set $Q\subseteq\Por$ is \emph{leaf-linked} if, for any maximal antichain $\set{p_n}{n\in\omega}\subseteq\Por$, there is some $n\in\omega$ such that $\forall p \in Q\,\exists j<n\,(p\,||\,p_j).$\smallskip
    
    \item\label{maindefa}  A set $Q\subseteq\Por$ is \emph{$\mathit{leaf}^*$-linked} if, for any $p\in\Por$ and for any maximal antichain $\set{p_n}{n\in\omega}\subseteq\Por$ of conditions below $p$, there is some $n\in\omega$ such that $\forall q \in Q\,[q\leq p)
    \Rightarrow\exists j<n\,(q\,||\,p_j)]$.\smallskip
    
    \item\label{maindefb}  Say that $\Por$ is \emph{$\theta$-soft-linked} if there is a sequence $\seq{Q_\alpha}{\alpha<\theta}$ of subsets of $\Por$ fulfilling the following:  
    \begin{enumerate}[label=\rm (\alph*)]
        \item\label{maindefba}  $Q_\alpha$ is $\mathrm{leaf}$-linked,  

        \item\label{maindefbb} $\bigcup_{\alpha<\theta}Q_\alpha$ is dense in $\Por$, and

         \item\label{maindefbc} $\forall\alpha, \alpha'<\theta\,\exists\alpha^*<\theta\,\forall p\in Q_\alpha\,\forall q\in Q_{\alpha'}\,[p\,||q\Rightarrow\exists r\in Q_{\alpha^*}(r\leq p\wedge r\leq q)]$.
    \end{enumerate}
\end{enumerate}
\end{definition}

\begin{remark}
Let $Q\subseteq\Por$. 
\begin{enumerate}
    \item If $Q$ is $\mathrm{leaf}$-linked, then $Q$ is $\mathrm{leaf}^*$-linked.\smallskip   

    \item $Q$ is $\mathrm{leaf}^*$-linked iff for all $p\in\Por$, $Q|p:=\set{q\in Q}{q\leq p}$ is $\mathrm{leaf}$-linked.  
\end{enumerate}
\end{remark}


In the ensuing discussion, our main objective is to establish the correlation between our concept and the $\sigma$-Fr\'echet-linked notion introduced by Mej\'ia. He demonstrated that a $\sigma$-Fr-linked poset does not add dominating reals and that it preserves a specific type of maximal almost disjoint (mad) families. The findings in Mej\'ia's work were later enhanced in the study conducted by~Brendle, the author, and Mej\'ia~\cite{BCM}, which served as the motivation for constructing matrix iterations of ${<}\theta$-uf-linked posets. The purpose of these iterations was to enhance the separation of the left-hand side of Cicho\'n's diagram from the study conducted by Goldstern, Mej\'ia, and Shelah~\cite{GMS}, by incorporating the inequality $\cov(\Mwf)<\dfrak=\non(\Nwf)=\cfrak$.

We begin with some notation: 

\begin{itemize}
    \item Denote by $\Fr:=\set{\omega\menos a}{a\in[\omega]^{<\aleph_0}}$ the \emph{Fr\'echet filter}.

    \item A filter $F$ on $\omega$ is \emph{free} if $\Fr\subseteq F$. A set $x\subseteq\omega$ is \emph{$F$-positive} if it intersects every member of $F$. Denote by $F^+$ the family of $F$-positive sets. Note that $x\in\Fr^+$ iff $x$ is an infinite subset of $\omega$.
\end{itemize}

\begin{definition}[{\cite{mejvert, BCM}}]\label{Def:Fr}
    Let $\Por$ be a poset and $F$ a filter on $\omega$. A set $Q\subseteq \Por$ is \emph{$F$-linked} if, for any $\bar p=\seq{p_n}{n<\omega} \in Q^\omega$, there is some $q\in \Por$ such that \[q\Vdash\set{n\in\omega}{p_n\in\dot G}\in F^+.\]
    Observe that, in the case $F=\Fr$, the above equation is “$\set{n\in\omega}{p_n\in\dot G}$ is infinite”. 
    
    We say that $Q$ is \emph{uf-linked (ultrafilter-linked)} if it is $F$-linked for any filter $F$ on $\omega$ containing the \emph{Fr\'echet filter} $\Fr$.
    
    For an infinite cardinal $\theta$, $\Por$ is \emph{$\mu$-$F$-linked} if $\Por = \bigcup_{\alpha<\theta}Q_\alpha$ for some $F$-linked $Q_\alpha$ ($\alpha<\theta$). When these $Q_\alpha$ are uf-linked, we say that $\Por$ is \emph{$\theta$-uf-linked}.
\end{definition}

It is clear that any uf-linked set $Q\subseteq \Por$ is F-linked, and consequently also $\Fr$-linked. But for ccc poset we have:

\begin{lemma}[{\cite[Lem~5.5]{mejvert}}]
If $\Por$ is ccc then any subset of $\Por$ is uf-linked iff it is Fr-linked. 
\end{lemma}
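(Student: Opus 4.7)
The direction \emph{uf-linked $\Rightarrow$ Fr-linked} is built into the definitions, since $\Fr$ is itself a free filter on $\omega$. For the converse, fix a filter $F$ on $\omega$ with $\Fr \subseteq F$ and a sequence $\bar p = \seq{p_n}{n < \omega} \in Q^\omega$, and write $B := \set{n \in \omega}{p_n \in \dot G}$. The goal is to produce $q \in \Por$ with $q \Vdash B \in F^+$. I argue by contradiction: assume no such $q$ exists. Then any condition can be extended to one forcing $B \notin F^+$, i.e.\ forcing $\exists A \in \check F\,(A \cap B = \emptyset)$, and a further strengthening decides a specific witness $A \in F$ with $q \Vdash A \cap B = \emptyset$. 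Hence the set
\[
D := \set{q \in \Por}{\exists A \in F\; q \Vdash A \cap B = \emptyset}
\]
is dense in $\Por$. By the ccc, $D$ contains a countable maximal antichain $\seq{q_k}{k < \omega}$; for each $k$, fix $A_k \in F$ with $q_k \Vdash A_k \cap B = \emptyset$. Unpacking, this means $q_k \Vdash p_n \notin \dot G$ for every $n \in A_k$.

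The crux is a diagonal extraction across the countably many witnesses. Because $F$ is a filter containing $\Fr$, every finite intersection $\bigcap_{k \leq i} A_k$ belongs to $F$ and is therefore infinite. Recursively choose $n_0 < n_1 < \cdots$ with $n_i \in \bigcap_{k \leq i} A_k$. The subsequence $\seq{p_{n_i}}{i < \omega}$ still lies in $Q^\omega$, so $\Fr$-linkedness of $Q$ yields some $q^* \in \Por$ with $q^* \Vdash \set{i}{p_{n_i} \in \dot G}$ is infinite. By maximality of the antichain, $q^*$ is compatible with some $q_k$; fix a common extension $r \leq q^*, q_k$. For every $i \geq k$ one has $n_i \in A_k$, so $r \leq q_k$ forces $p_{n_i} \notin \dot G$. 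Thus $r$ forces the (infinite, by $r \leq q^*$) set $\set{i}{p_{n_i} \in \dot G}$ to be contained in $\{0, \ldots, k-1\}$, a contradiction.

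The step I expect to feel the most subtle is the passage from the hypothesis that no $q$ forces $B \in F^+$ to the usable dense set $D$: this requires extracting a specific $A \in F$ (not merely a name for one) below a generic extension, using that $F$ is a ground-model set. Once $D$ is in hand, the ccc restricts the supply of witnesses $A_k$ to countably many, and the filter's closure under \emph{finite} intersections---rather than the countable intersections one would need without ccc---is exactly what makes the diagonal subsequence available to trigger $\Fr$-linkedness and close the contradiction.
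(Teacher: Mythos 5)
Your proof is correct. The paper does not supply its own proof of this lemma---it is cited to Mej\'ia---but the argument you give is the natural one and, to my knowledge, matches the original. Both directions are handled properly: uf-linked implies $\Fr$-linked is immediate since $\Fr$ is itself a free filter; for the converse you correctly negate the conclusion, pass via the forcing theorem to a dense set of conditions carrying a \emph{ground-model} witness $A\in F$ (the point you rightly flag as subtle: $r_0\Vdash\dot A\in\check F$ forces $\dot A=\check A$ for some $A\in F$ on a dense set of extensions, because $F$ lives in the ground model), thin to a countable maximal antichain $\seq{q_k}{k<\omega}$ with witnesses $A_k$ by ccc, and use that $F\supseteq\Fr$ makes every finite intersection $\bigcap_{k\le i}A_k$ infinite to diagonalize out a subsequence $\seq{p_{n_i}}{i<\omega}$; the $q^*$ that $\Fr$-linkedness yields for this subsequence is then incompatible with every $q_k$, contradicting maximality. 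The only cosmetic remark is that the antichain may be finite, in which case one should allow the indexing of the $q_k$'s and $A_k$'s to terminate (or pad with $A_k=\omega$); this does not affect the argument.
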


One of the main results of this section is:

\begin{theorem}\label{SoftFr}
Assume that $\Por$ is ccc. If $\Por$ is $\theta$-$\mathrm{soft}$-linked poset, then it is $\theta$-$\Fr$-linked.
\end{theorem}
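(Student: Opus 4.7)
The plan is to show directly that each set $Q_\alpha$ in the $\theta$-soft-linked decomposition $\seq{Q_\alpha}{\alpha<\theta}$ is itself $\Fr$-linked; combined with the density of $\bigcup_{\alpha<\theta}Q_\alpha$ in $\Por$, this yields the desired $\theta$-$\Fr$-linked cover (any residual conditions of $\Por\setminus\bigcup_\alpha Q_\alpha$ may be absorbed at negligible cost in the applications, where $|\Por|\le\theta$).

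Fix $\alpha<\theta$ and a sequence $\bar p=\seq{p_n}{n<\omega}$ of conditions in $Q_\alpha$. I aim to find $q^*\in\Por$ forcing $\set{n}{p_n\in\dot G}$ to be infinite. The natural candidate is any $q^*$ beneath which every tail $\set{p_n}{n\ge k}$ is predense, for then each tail meets $\dot G$ in any generic extension containing $q^*$. Set
\[
D:=\set{q\in\Por}{\exists k\in\omega\ \forall n\ge k\ (q\perp p_n)};
\]
the existence of a suitable $q^*$ is equivalent to the assertion that $D$ is not dense in $\Por$.

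Suppose, for contradiction, that $D$ is dense. Using ccc, extract a countable maximal antichain $\seq{q_m}{m<\omega}$ of $\Por$ contained in $D$, and fix $k_m\in\omega$ witnessing $q_m\in D$. Apply the $\mathrm{leaf}$-linkedness of $Q_\alpha$ to this antichain to obtain $N\in\omega$ such that every condition in $Q_\alpha$ is compatible with some $q_j$ for $j<N$. Choose any $n\ge\max\set{k_j}{j<N}$: then $p_n\in Q_\alpha$ must be compatible with some $q_j$, $j<N$, yet $n\ge k_j$ simultaneously forces $p_n\perp q_j$ — a contradiction. Hence $D$ is not dense and any $q^*\in\Por$ with no extension in $D$ delivers the required witness.

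The main obstacle is the extraction of a \emph{countable} maximal antichain inside $D$; this is precisely where ccc is essential, since the $\mathrm{leaf}$-linked property is phrased only against countable antichains. Everything else is a simple pigeonhole: once $N$ is supplied by $\mathrm{leaf}$-linkedness, any $n$ exceeding the finitely many thresholds $k_0,\ldots,k_{N-1}$ produces the contradiction. Notice that the remaining clauses~\ref{maindefbb} and~\ref{maindefbc} of the definition of $\theta$-soft-linkedness are not consumed; only the $\mathrm{leaf}$-linkedness of each $Q_\alpha$, together with ccc, enters the proof.
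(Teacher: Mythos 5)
Your argument is correct, but it takes a genuinely different route from the paper's. The paper deduces \autoref{SoftFr} from \autoref{thm:a1}, which establishes a three-way equivalence for subsets of a ccc poset between $\Fr$-linkedness, $\mathrm{leaf}$-linkedness, and the condition that for every $\Por$-name $\dot n$ of a natural number there is $m$ with no $p\in Q$ forcing $m<\dot n$; the needed implication ``$\mathrm{leaf}$-linked $\Rightarrow$ $\Fr$-linked'' is obtained by chaining $\ref{Soft}\Rightarrow\ref{ConFr}$ with $\neg\ref{Fr}\Rightarrow\neg\ref{ConFr}$, both of which manipulate names and antichains deciding them. You instead prove the implication directly: given $\bar p\in Q_\alpha^\omega$, you introduce the set $D$ of conditions that are eventually incompatible with all $p_n$, observe that non-density of $D$ yields a condition below which every tail $\{p_n\mid n\ge k\}$ is predense (hence the required witness), and you refute density of $D$ by extracting, via ccc, a countable maximal antichain inside $D$, applying $\mathrm{leaf}$-linkedness to it, and pigeonholing over the finitely many thresholds $k_0,\dots,k_{N-1}$. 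This is a cleaner and more self-contained combinatorial argument for the specific implication needed here, sidestepping the name-manipulation in the paper's clause~\ref{ConFr}; what the paper's route buys is the full three-way equivalence, recorded as \autoref{thm:a1} for independent use elsewhere. Your structure is in fact nicely dual to the paper's direct proof of $\ref{Fr}\Rightarrow\ref{Soft}$: there, a sequence of bad conditions $q_m$ is fed into $\Fr$-linkedness; here, a maximal antichain of bad conditions is fed into $\mathrm{leaf}$-linkedness. One small caveat: your opening remark that residual conditions of $\Por\setminus\bigcup_\alpha Q_\alpha$ can be absorbed ``where $|\Por|\le\theta$'' is not a general repair of the mismatch between the density requirement in \autoref{maindef}~\ref{maindefbb} and the covering requirement $\Por=\bigcup_\alpha Q_\alpha$ in \autoref{Def:Fr} (in typical applications $|\Por|$ exceeds $\theta$); but the paper's own proof passes over this same point silently, so it is not a defect specific to your approach.
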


In order to prove the theorem above, all we have to do is to prove that any subset of a poset~$\Por$ is $\Fr$-linked, it is still leaf-linked. However, it is worth noting that we have much more:

\begin{lemma}\label{thm:a1}
Let $\Por$ be a ccc forcing notion and a set $Q\subseteq\Por$. The following statements are equivalent:
\begin{enumerate}[label=\rm (\arabic*)]
    \item\label{Fr} $Q$ is $\Fr$-linked.\smallskip
    
    \item\label{Soft} $Q$ is $\mathrm{leaf}$-linked.\smallskip

    \item\label{ConFr} for each $\Por$-name $\dot n$ of a natural number there is some $m<\omega$ such that $\forall p\in Q\,(p\not\Vdash m<\dot n)$. 
\end{enumerate}
\end{lemma}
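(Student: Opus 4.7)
The plan is to establish the cycle $(2)\Rightarrow(3)\Rightarrow(1)\Rightarrow(2)$. The ccc hypothesis will enter twice, each time to ensure that a relevant maximal antichain is countable; the technical heart of the argument is a density-plus-name construction used in $(3)\Rightarrow(1)$.

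For $(2)\Rightarrow(3)$, given a $\Por$-name $\dot n$ for a natural number, I would first fix a countable maximal antichain $\{a_k:k<\omega\}$ of conditions each deciding $\dot n$, writing $a_k\Vdash\dot n=f(k)$ for some $f\colon\omega\to\omega$. Applying leaf-linkedness to $\{a_k:k<\omega\}$ yields some $N<\omega$ such that every $p\in Q$ is compatible with some $a_k$ with $k<N$. Setting $m:=\max\{f(k):k<N\}$, the desired conclusion $\forall p\in Q(p\not\Vdash m<\dot n)$ follows, because any $p\in Q$ forcing $m<\dot n$ would admit a common extension $r\leq p,a_k$ (for $k<N$ with $a_k\,\|\,p$) simultaneously forcing $\dot n=f(k)\leq m$ and $m<\dot n$, which is absurd.

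For $(3)\Rightarrow(1)$, let $\bar p=\seq{p_n}{n<\omega}\in Q^\omega$ and assume for contradiction that no condition forces $\{n:p_n\in\dot G\}$ to be infinite. By a standard density argument (using that $q\not\Vdash\phi$ entails some $q'\leq q$ with $q'\Vdash\neg\phi$, and then refining to decide an upper bound), the set $D:=\{q'\in\Por:\exists k<\omega,\, q'\Vdash\{n:p_n\in\dot G\}\subseteq k\}$ is dense in $\Por$. By ccc, fix a countable maximal antichain $A\subseteq D$ and, for each $q'\in A$, a witness $k_{q'}<\omega$ with $q'\Vdash\{n:p_n\in\dot G\}\subseteq k_{q'}$. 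Define a $\Por$-name $\dot m$ for a natural number by declaring $\dot m:=k_{q'}$ whenever $q'\in A\cap\dot G$. Applying $(3)$ to $\dot m$ produces $m^*<\omega$ with $\forall p\in Q(p\not\Vdash m^*<\dot m)$. However, $p_{m^*}\in Q$ forces both $m^*\in\{n:p_n\in\dot G\}$ (since $p_{m^*}\in\dot G$) and $\{n:p_n\in\dot G\}\subseteq\dot m$ (via the unique $q'\in A\cap\dot G$), hence $p_{m^*}\Vdash m^*<\dot m$, contradicting the choice of $m^*$.

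For $(1)\Rightarrow(2)$, suppose $Q$ is not leaf-linked and pick a maximal antichain $\{a_n:n<\omega\}$ together with a sequence $\seq{q_N}{N<\omega}\in Q^\omega$ such that $q_N\perp a_j$ for all $j<N$. By $\Fr$-linkedness applied to $\seq{q_N}{N<\omega}$, some $r\in\Por$ forces $\{N:q_N\in\dot G\}$ to be infinite. Since $\{a_n:n<\omega\}$ is maximal, extend $r$ to a common lower bound $r'\leq r,a_{n^*}$ for some $n^*$. The basic fact that $a_{n^*}\perp q_N$ is equivalent to $a_{n^*}\Vdash q_N\notin\dot G$ then gives $r'\Vdash q_N\notin\dot G$ for every $N>n^*$, contradicting $r'\Vdash\{N:q_N\in\dot G\}$ is infinite. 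The principal obstacle is the name construction in $(3)\Rightarrow(1)$: one must glue the local bounds $k_{q'}$ along the antichain $A$ into a single well-defined $\Por$-name $\dot m$ for a natural number, and this gluing is what requires the countability of $A$ afforded by ccc.
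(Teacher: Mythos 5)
Your proof is correct and follows essentially the same route as the paper: the same cycle $(2)\Rightarrow(3)\Rightarrow(1)\Rightarrow(2)$, with each implication resting on the same idea (a maximal antichain deciding $\dot n$ plus leaf-linkedness for $(2)\Rightarrow(3)$; building a name $\dot m$ bounding $\{n:p_n\in\dot G\}$ and testing it against $(3)$ for $(3)\Rightarrow(1)$; and using Fr-linkedness against a witnessing sequence of $Q$-conditions that dodge larger and larger initial segments of the antichain for $(1)\Rightarrow(2)$). The only cosmetic differences are that you spell out the density-plus-mixing construction of $\dot m$ that the paper takes for granted, and you phrase the contradictions slightly differently (e.g.\ extending below a member of the antichain rather than deriving $q\Vdash\forall n\,(p_n\notin\dot G)$).
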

\begin{proof}
\noindent$\ref{Fr}\Rightarrow\ref{Soft}$: Let $\set{p_n}{n\in\omega}$ be a maximal antichain in $\Por$. Towards a~contradiction let us assume $\forall m\,\exists q_m\in Q\,\forall n<m\,(q_m\perp p_n)$. By $\Fr$-linkedness, choose $q\in\Por$ such that $q\Vdash``\exists^\infty m\in\omega\,(q_m\in\dot G)"$. Since each $q_m$ fulfills $\forall n<m\,(q_m\perp p_n)$, $q\Vdash\exists^\infty m\forall n<m\,(p_n\not\in\dot G)$. Hence, $q\Vdash\forall n<\omega\,(p_n\not\in\dot G)$. On the other hand, since $\set{p_n}{n\in\omega}$ is a maximal antichain, by using~\autoref{basfor} $\Vdash \exists n\in\omega\,(p_n\in\dot G)$, which establishes a~contradiction.

\noindent$\ref{Soft}\Rightarrow\ref{ConFr}$: Assume that $\dot n$ is a $\Por$-name of a natural number and assume towards a contradiction that for each $m\in\omega$, there is some $q_m\in Q$ that forces $m<\dot n$. Let $\set{p_k}{k\in\omega}$ be a maximal antichain deciding the value $\dot n$ and let $n_k$ be such that $p_k\Vdash``\dot n=n_k"$. By~soft-linkedness, choose $m\in\omega$ such that $\forall p\in Q\,\exists k<m\,(p\parallel p_k)$. Next, let $k^\star$ be larger than $\max\set{n_k}{k<m}$. Then $q_{k^\star}\Vdash k^\star<\dot n$ and since $q_{k^\star}\in Q$, then there is $k<m$ such that $q_{k^\star}\parallel p_k$. Let  $r\leq q_{k^\star}, p_k$. Then $r\Vdash \dot n=k<k^\star<\dot n$, which is a contradiction. 

\noindent$\neg\ref{Fr}\Rightarrow\neg\ref{ConFr}$: Assume that there is a sequence $\seq{p_{n}}{n < \omega}$ in $Q$ such that $\Vdash``\set{n\in\omega}{p_n\in \dot G}$ finite". Then there is a $\Por$-name $\dot M$ in $\omega$ such that $\Vdash\set{n\in\omega}{p_n\in \dot G}\subseteq\dot M$. Towards a~contradiction suppose that $\exists m\,\forall q\in Q\,(q\not\Vdash m<\dot M)$. Choose $m\in\omega$ such that $\forall q\in Q\,(q\not\Vdash m<\dot M)$. For any $n\in\omega$, $p_n\not\Vdash m<\dot M $. Next, find 
$r\leq p_m$ such that $r\Vdash m\geq\dot M$, on the other hand, since  $r\Vdash p_m\in G$, $r\Vdash m<\dot M$. This is a~contradiction. 
\end{proof}

\begin{remark}
We, along with Mej\'ia, proved the direction \noindent$\ref{ConFr}\Rightarrow\ref{Fr}$ in~\autoref{thm:a1}. 
\end{remark}

Next, we proceed to derive several consequences stemming from~\autoref{maindef}. These ramifications hold significant relevance in substantiating the theorem about the preservation of the evasion number, as explored further in~\autoref{thm:preva}.

\begin{lemma}\label{solfch}
Let $\theta$ be a cardinal number. Consider the following properties of a~poset~$\Pbb$:
\begin{enumerate}[label=\rm (\arabic*)]
\item\label{solfch:A} $\Por$ is $\theta$-soft-linked.\smallskip
\item\label{solfch:B}
There is a~sequence $\seq{ Q_\alpha}{\alpha<\theta}$ of soft-linked
subsets of\ ~$\Pbb$ such that $Q=\bigcup_{\alpha<\theta}Q_\alpha$ is dense
in~$\Pbb$ and there are functions $h\colon Q\to\theta$ and $g^\star\colon\theta\times Q\to\theta$ such that
$\forall\alpha<\theta$ $h^{-1}\llbracket\{\alpha\}\rrbracket\subseteq Q_\alpha$ and
\begin{equation*}
\forall\alpha<\theta\,
\forall p\in Q_{\alpha}\,
\forall p'\in Q\,
[p\parallel p'\Rightarrow\exists r\in Q_{g^*(\alpha,p')}\,(
r\le p\text{ and }r\le p')].
\label{solfch:Beq}
\tag{\faPencilSquareO$_\mathrm{2}$}
\end{equation*}
\item\label{solfch:C}
There is a~sequence $\seq{Q_\alpha}{\alpha<\theta}$ of soft-linked
subsets of~$\Pbb$ such that $Q=\bigcup_{\alpha<\theta}Q_\alpha$ is dense
in~$\Pbb$ and there is $\bar g\colon Q\times Q\to\theta$ such that
\begin{equation*}
\forall p,p'\in Q\,
[p\parallel p'\Rightarrow\exists r\in Q_{\bar g(p,p')}\,(
r\le p\text{ and }r\le p')].
\tag{\faPencilSquareO$_\mathrm{3}$}
\end{equation*}
\end{enumerate}
Then $\textup{\ref{solfch:A}}\Rightarrow\textup{\ref{solfch:B}}\Rightarrow\textup{\ref{solfch:C}}$.
\end{lemma}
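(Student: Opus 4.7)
Both implications amount to replacing the Skolem-style existential quantifiers over the indices $\alpha^*$ in the definition by explicit choice functions; no combinatorics is required beyond what already appears in \autoref{maindef}. The plan is to carry the very same witnessing sequence $\langle Q_\alpha : \alpha<\theta\rangle$ across all three items, and to introduce only the bookkeeping functions $h,g,\bar g$ needed to turn the existential clauses into functional ones.

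For \ref{solfch:A}$\Rightarrow$\ref{solfch:B}, I would start from a sequence $\langle Q_\alpha : \alpha<\theta\rangle$ witnessing \ref{solfch:A} and set $Q := \bigcup_{\alpha<\theta} Q_\alpha$. Using AC on $\theta$ choices, pick $h\colon Q\to\theta$ so that $p\in Q_{h(p)}$ for every $p\in Q$; then $h^{-1}\llbracket\{\alpha\}\rrbracket\subseteq Q_\alpha$ by construction. Applying AC once more to \autoref{maindef}\ref{maindefbc}, fix a function $\sigma\colon\theta\times\theta\to\theta$ such that $\sigma(\alpha,\alpha')$ is a valid witness $\alpha^*$ for the pair $(\alpha,\alpha')$, and define
\[
g(\alpha,p') := \sigma(\alpha, h(p')) \qquad (\alpha<\theta,\ p'\in Q).
\]
The verification is then a one-line check: if $p\in Q_\alpha$ and $p'\in Q$ are compatible, then $p'\in Q_{h(p')}$, hence by clause \ref{maindefbc} applied to $(\alpha,h(p'))$ there is $r\le p,p'$ with $r\in Q_{\sigma(\alpha,h(p'))}=Q_{g(\alpha,p')}$, as required by \ref{solfch:Beq}.

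For \ref{solfch:B}$\Rightarrow$\ref{solfch:C}, I would keep the sequence $\langle Q_\alpha\rangle$ and the function $h$ given by \ref{solfch:B}, and simply set
\[
\bar g(p,p') := g(h(p), p')\qquad (p,p'\in Q).
\]
Given compatible $p,p'\in Q$, the inclusion $h^{-1}\llbracket\{h(p)\}\rrbracket\subseteq Q_{h(p)}$ gives $p\in Q_{h(p)}$, so the defining property of $g$ produces $r\le p,p'$ lying in $Q_{g(h(p),p')}=Q_{\bar g(p,p')}$. Density of $Q$ and leaf-linkedness of each $Q_\alpha$ are inherited verbatim from \ref{solfch:B}.

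There is essentially no obstacle: the only conceptual point is that \ref{maindef}\ref{maindefbc} is a purely existential statement in $\alpha^*$, and one must invoke choice to turn it into the function $\sigma$ (and then $g$) used in \ref{solfch:B}; once that Skolemization is in place, \ref{solfch:C} is obtained by composing with $h$. I would also briefly remark that the implications are strict in content only in how much information the witness carries: \ref{solfch:A} depends on the index pair, \ref{solfch:B} on an index and a condition, and \ref{solfch:C} on a pair of conditions, so each step is genuinely a weakening.
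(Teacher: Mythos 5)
Your proof is correct and follows essentially the same route as the paper's: build a section $h$ of $\bigcup_\alpha Q_\alpha$, Skolemize the existential in \autoref{maindef}\ref{maindefbc} to a function $\theta\times\theta\to\theta$, and compose with $h$ to get $g^\star$, then compose again to get $\bar g$. The only cosmetic difference is that the paper defines $h(p)=\min\{\alpha:p\in Q_\alpha\}$ rather than invoking AC (which trivially guarantees the section property and sidesteps choice), and uses the names $g$ and $g^\star$ where you use $\sigma$ and $g$.
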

\begin{proof}
In all cases, we can define $h\colon Q\to\theta$ by
\[h(p)=\min\set{\alpha<\theta}{p\in Q_\alpha}\] for
$p\in Q$ because $Q=\bigcup_{\alpha<\theta}Q_\alpha$.

\noindent$\ref{solfch:A}\Rightarrow\ref{solfch:B}$. By~employing~\autoref{maindef}~\ref{maindefb}~\ref{maindefbc}, we can define a function  $g\colon\theta\times\theta\to\theta$ such that $g\colon(\alpha,\alpha')\mapsto\alpha^*$. Define $g^\star\colon\theta\times Q\to\theta$ by $g^\star(\alpha,p)=g(\alpha,h(p))$ for
$(\alpha,p)\in\theta\times Q$.
Assume that $p\in Q_\alpha$ and $p'\in Q$ and $p\parallel p'$.
Then $p'\in Q_{h(p')}$ and $g^\star(\alpha,p')=g(\alpha,h(p'))$.
Therefore by~\ref{solfch:A} there is $r\in Q_{g(\alpha,h(p'))}=Q_{g^\star(\alpha,p')}$ such that
$r\le p,p'$.

\noindent$\ref{solfch:B}\Rightarrow\ref{solfch:C}$.
Define $G\colon Q\times Q\to\theta$ by $\bar g(p,p')=g^\star(h(p),p')$ for
$(p,p')\in Q\times Q$.
Assume that $p,p'\in Q$ and $p\parallel p'$.
Then $p\in Q_{h(p)}$ and by~\ref{solfch:B} there is $r\in Q_{g^\star(h(p),p')}=Q_{\bar g(p,p')}$
such that $r\le p,p'$.
\end{proof}

The following lemma establishes two more consequences of $\theta$-soft-linked and gives a characterization of $\sigma$-soft-linked in the particular case $\theta=\omega$.

\begin{lemma}\label{solft:one}
Let $\theta$ be a cardinal number and let $\Pbb$ be a poset. Consider the following properties of $\Por$:
\begin{enumerate}[label=\rm (\arabic*)]
\item \label{solft:onea}
$\Pbb$ is $\theta$-soft-linked.\smallskip
\item\label{solft:oneb}
There is a~dense set $Q\subseteq\Pbb$ and there are 
functions $h\colon Q\to\theta$ and $g\colon\theta\times\theta\to\theta$ with the
following properties:
\begin{enumerate}[label=\rm (\roman*)]
\item
For every $\alpha<\theta$ the set\/ $\{p\in Q\mid h(p)=\alpha\}$ is leaf-linked.
\item
$\forall p,q\in Q$
$[p\parallel q\Rightarrow
\exists r\in Q$ $(r\le p,q$ and $h(r)\le g(h(p),h(q)))]$.
\end{enumerate}\smallskip
\item \label{solft:onec}
There is a~dense set $Q\subseteq\Pbb$ and there are
functions $h\colon Q\to\theta$ and $g\colon\theta\times\theta\to\theta$ with the
following properties:
\begin{enumerate}[label=\rm (\roman*)]
\item
For every $\alpha<\theta$ the set\/ $\{p\in Q\mid h(p)\le\alpha\}$ is
leaf-linked.
\item
$\forall p,q\in Q$
$[p\parallel q\Rightarrow
\exists r\in Q$ $(r\le p,q$ and $h(r)\le g(h(p),h(q)))]$.
\end{enumerate}
\end{enumerate}
Then $\ref{solft:onec}\Rightarrow\ref{solft:oneb}$; $\ref{solft:onea}\Rightarrow\ref{solft:oneb}$; $\ref{solft:onec}\Rightarrow\ref{solft:onea}$ for regular~$\theta$; and
$\ref{solft:onea}\Leftrightarrow\ref{solft:oneb}\Leftrightarrow\ref{solft:onec}$ for $\theta=\omega$.
\end{lemma}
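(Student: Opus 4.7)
The plan is to establish the three listed implications and then close the cycle for $\theta=\omega$ via the remaining implication $\ref{solft:oneb}\Rightarrow\ref{solft:onec}$. The only step that genuinely uses cardinal arithmetic on $\theta$ is $\ref{solft:onec}\Rightarrow\ref{solft:onea}$; every other direction reduces to two elementary facts about leaf-linked subsets of a poset: any subset of a leaf-linked set is leaf-linked (the same $n$-witness against a maximal antichain works), and a \emph{finite} union of leaf-linked sets is leaf-linked (take the maximum of the individual witnesses).

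For $\ref{solft:onea}\Rightarrow\ref{solft:oneb}$, I would take the sequence $\seq{Q_\alpha}{\alpha<\theta}$ witnessing $\theta$-soft-linkedness, set $Q=\bigcup_{\alpha<\theta}Q_\alpha$, and define $h(p)=\min\set{\alpha<\theta}{p\in Q_\alpha}$. Applying clause~\ref{maindefbc} of~\autoref{maindef} to each pair $(\alpha,\alpha')\in\theta\times\theta$ yields a function $g\colon\theta\times\theta\to\theta$ by selecting the corresponding $\alpha^*$; if $p,q\in Q$ are compatible then $p\in Q_{h(p)}$ and $q\in Q_{h(q)}$, so some $r\in Q_{g(h(p),h(q))}$ satisfies $r\le p,q$, giving $h(r)\le g(h(p),h(q))$. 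Condition~(i) of~\ref{solft:oneb} is inherited from leaf-linkedness of each $Q_\alpha$, since $\{p\in Q\mid h(p)=\alpha\}\subseteq Q_\alpha$. The implication $\ref{solft:onec}\Rightarrow\ref{solft:oneb}$ is immediate by the same subset remark: $\{p\in Q\mid h(p)=\alpha\}\subseteq\{p\in Q\mid h(p)\le\alpha\}$, with the same $(Q,h,g)$ working.

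The crux is $\ref{solft:onec}\Rightarrow\ref{solft:onea}$ for regular~$\theta$. Given $(Q,h,g)$ as in~\ref{solft:onec}, I would put $Q_\alpha:=\{p\in Q\mid h(p)\le\alpha\}$; this is leaf-linked by hypothesis, and $\bigcup_{\alpha<\theta}Q_\alpha=Q$ is dense in $\Pbb$. To verify clause~\ref{maindefbc} of~\autoref{maindef}, for $\alpha,\alpha'<\theta$ I would define
\[\alpha^*:=\sup\set{g(\beta,\beta')+1}{\beta\le\alpha,\,\beta'\le\alpha'},\]
a supremum of a set of ordinals below~$\theta$ of cardinality $<\theta$, hence $<\theta$ by the regularity of~$\theta$. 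For compatible $p\in Q_\alpha$ and $q\in Q_{\alpha'}$, clause~(ii) of~\ref{solft:onec} produces $r\in Q$ with $r\le p,q$ and $h(r)\le g(h(p),h(q))<\alpha^*$, so $r\in Q_{\alpha^*}$. This is exactly the spot where singular $\theta$ would derail the argument: the sup could fail to be below~$\theta$.

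Finally, for $\theta=\omega$ the cycle is closed by $\ref{solft:oneb}\Rightarrow\ref{solft:onec}$. For each $n<\omega$ the set $\{p\in Q\mid h(p)\le n\}=\bigcup_{k\le n}\{p\in Q\mid h(p)=k\}$ is a finite union of leaf-linked sets, hence leaf-linked; so the same $Q,h,g$ supplied by~\ref{solft:oneb} witness~\ref{solft:onec}. The main conceptual obstacle in the whole proof is the regularity argument in the previous paragraph; the other three implications are routine translations between the formulations.
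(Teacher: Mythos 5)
Your proof is correct and follows essentially the same route as the paper: the same choice of $Q$ and $h$ for $\ref{solft:onea}\Rightarrow\ref{solft:oneb}$, the same definition $Q_\alpha=\set{p\in Q}{h(p)\le\alpha}$ with the regularity-of-$\theta$ argument on the supremum for $\ref{solft:onec}\Rightarrow\ref{solft:onea}$, and the finite-union-of-leaf-linked observation to close the cycle at $\theta=\omega$. The cosmetic $+1$ in your definition of $\alpha^*$ is harmless and changes nothing of substance.
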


\begin{remark}
The function $h$ in conditions~\ref{solft:oneb} and~\ref{solft:onec} of~\autoref{solft:one} may be not monotone.  
\end{remark}

\begin{proof}[Proof of~\autoref{solft:one}]
The implication \indent$\ref{solft:onec}\Rightarrow\ref{solft:oneb}$ is trivial.

\noindent$\ref{solft:onea}\Rightarrow\ref{solft:oneb}$. Let $\langle Q_\alpha\mid\alpha<\theta\rangle$ be a~sequence of 
subsets of~$\Pbb$ witnessing that $\Pbb$~is $\theta$-soft-linked.
The set $Q:=\bigcup_{\alpha<\theta}Q_\alpha$ is a~dense subset of~$\Pbb$.
For $p\in Q$ define
$h(p)=\min\{\alpha<\theta\mid p\in Q_\alpha\}$.
By using~\autoref{maindef}~\ref{maindefb}~\ref{maindefbc} for every $\alpha,\beta<\theta$ define
\[g(\alpha,\beta)=\min\set{\gamma<\theta}{\forall p\in Q_\alpha
\forall q\in Q_\beta\,[p\parallel q\Rightarrow\exists r\in Q_\gamma\,(r\le p,q)]}.\]
Condition (i)~holds because every set $\{p\in Q\mid h(p)=\alpha\}$ is a~subset
of the leaf-linked set~$Q_\alpha$ and is therefore leaf-linked.
Assume $p,q\in Q$ and $p\parallel q$.
Then $p\in Q_{h(p)}$ and $q\in Q_{h(q)}$ and so there is $r\in Q_{g(h(p),h(q))}$
such that $r\le p$ and $r\le q$.
By definition of~$h$, $h(r)\le g(h(p),h(q))$ and hence (ii)~holds.

\noindent$\ref{solft:onec}\Rightarrow\ref{solft:onea}$ for regular~$\theta$.
By~using~(i) for every $\alpha<\theta$ the set $Q_\alpha=\set{p\in Q}{h(p)\le\alpha}$
is leaf-linked and $Q=\bigcup_{\alpha<\theta}Q_\alpha$ is a~dense subset
of~$\Pbb$.
We verify the condition \ref{maindefbc} in ~\autoref{maindef}~\ref{maindefb}.
Fix $\alpha,\alpha'\le\theta$.
Define \[\alpha^*=\sup\set{g(\xi,\eta)}{\xi,\eta\le\max\{\alpha,\alpha'\}};\]
$\alpha^*<\theta$ because $\theta$~is regular.
Assume $p\in Q_\alpha$, $q\in Q_{\alpha'}$, and $p\parallel q$.
Then $g(h(p),h(q))\le\alpha^*$ because $h(p)\le\alpha$ and $h(q)\le\alpha'$.
By~using~(ii) there is $r\le p,q$ such that $h(r)\le g(h(p),h(q))\le\alpha^*$ and
hence $r\in Q_{\alpha^*}$.\smallskip

\noindent$\ref{solft:oneb}\Rightarrow\ref{solft:onec}$ for $\theta=\omega$.
Note that $\set{p\in Q}{h(p)\le n}$ is a~finite union of leaf-linked sets
$\set{p\in Q}{h(p)=k}$ for $k\le n$ and is therefore leaf-linked.
\end{proof}
\newcommand{\up}{up}
\newcommand{\mup}{$\uparrow$up}


In the rest of this section, we study some properties of pairs $\la \Por, h\ra$ which allows us to guarantee that  $\Por$ is  $\sigma$-soft-linked. We next fix some terminology. 

Let $P, Q\subseteq\Por$ and $p\in\Por$. We write $p\perp P$ if $p\perp q$ for all $q\in P$ and $Q\perp P$ if $p\perp P$ for every $p\in Q$.
We consider the following property of pairs $\la\Pbb,h\ra$ where $\Por$ is a poset and and $h$ a height function on $\Por$: Say that $\la\Pbb,h\ra$ has \emph{flash property} if:
\begin{equation}
 \parbox{0.8\textwidth}{
\begin{enumerate}[label=\rm (\Roman*)]
    \item given $m\in\omega$ and a 
 finite non-maximal non-empty antichain $P\subseteq\Por$, there is some finite subset $R$ of 
$\Por$ such that 
$R\perp P$ and $\forall p\in\Pbb$
$[(p\perp P$ and $h(p)\le m)\Rightarrow\exists r\in R\, (p\le r)]$; and \medskip
    
    \item if $p, q\in\Por$ are compatible, then there is $r\leq p,q$ so that $h(r)\leq h(p)+h(q)$.
\end{enumerate}
}
\tag{\faFlash}
\label{Miropr}
\end{equation}

\begin{lemma}
A~ccc poset $\Por$ with a~height function $h\colon\Por\to\omega$
satisfying~\emph{(\ref{Miropr})} is $\sigma$-soft-linked.
\end{lemma}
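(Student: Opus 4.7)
The plan is to invoke Lemma~\ref{solft:one}\ref{solft:onec} with $\theta=\omega$, taking $Q:=\Por$, using the given height function $h$, and setting $g(\alpha,\beta):=\alpha+\beta$. Putting $Q_m:=\{p\in\Por : h(p)\leq m\}$, the density of $\bigcup_{m<\omega}Q_m=\Por$ is immediate from $h\colon\Por\to\omega$, and condition~(ii) of~\ref{solft:onec} follows directly from~(II) of~(\ref{Miropr}): a compatible pair $p,q$ admits a refinement $r\leq p,q$ with $h(r)\leq h(p)+h(q)=g(h(p),h(q))$.

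The main task is therefore condition~(i): each $Q_m$ is leaf-linked. Fix a maximal antichain $A=\{p_n : n<\omega\}$, which by ccc may be assumed countable; the case $A$ finite is immediate, so assume $A$ is infinite. The plan is to apply property~(I) of~(\ref{Miropr}) to the finite non-maximal antichains $P^N:=\{p_0,\ldots,p_{N-1}\}$ with parameter~$m$, obtaining finite sets $R^N\subseteq\Por$ with $R^N\perp P^N$ such that every $p\in Q_m$ with $p\perp P^N$ extends some $r\in R^N$. A first preliminary reduction is that we may take $R^N\subseteq Q_m$: drop each $r\in R^N$ not reached by any $p\in Q_m$ with $p\leq r$ (the covering property survives), and any surviving $r\in R^N$ must satisfy $h(r)\leq h(p)\leq m$ by monotonicity of $h$. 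Leaf-linkedness of $Q_m$ at $A$ is equivalent to $R^N=\emptyset$ for some $N$, so I would argue by contradiction and assume $R^N\neq\emptyset$ for every $N$.

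Under this contradiction hypothesis, I would organize the $R^N$'s into a finitely branching tree~$T$ whose level-$N$ nodes are the elements of $R^N$ and in which $r\in R^{N+1}$ is declared a child of a chosen $r'\in R^N$ whenever $r\leq r'$. Such a parent always exists, because $R^{N+1}\subseteq Q_m\cap\{p : p\perp P^{N+1}\}\subseteq Q_m\cap\{p : p\perp P^N\}$, and the latter set is covered by $R^N$ via~(I) at level~$N$. König's Lemma then produces an infinite branch: a decreasing chain $r^1\geq r^2\geq\cdots$ with each $r^N\in Q_m$ and $r^N\perp p_j$ for all $j<N$.

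The remaining — and hardest — step is deriving an explicit contradiction from this chain. In the Boolean completion of $\Por$ the maximality of $A$ forces $\bigwedge_N r^N=0$, but to contradict maximality within $\Por$ we need a condition actually living in~$\Por$ that is incompatible with every element of~$A$. My plan for this final step is to exploit the ccc hypothesis together with~(II), possibly applying~(I) once more to a finite antichain judiciously assembled from members of the chain and of~$A$, so as to extract a concrete $\Por$-witness violating the maximality of~$A$. Bridging the Boolean limit to such an explicit $\Por$-witness is, in my view, the combinatorial core of the proof and the step most in need of a delicate construction.
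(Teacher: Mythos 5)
Your framing is exactly the paper's: verify Lemma~\ref{solft:one}\ref{solft:onec} for $\theta=\omega$ with $Q=\Por$, $Q_m=\{p:h(p)\le m\}$, and $g(n,m)=n+m$, observe that density and condition~(ii) are immediate from~(II) of~(\ref{Miropr}), and reduce everything to showing each $Q_m$ is leaf-linked. Your reduction to $R^N\subseteq Q_m$ (via monotonicity of $h$) and the observation that each element of $R^{N+1}$ lies below some element of $R^N$ (since it is still in $Q_m$ and still $\perp P^N$) also match the paper. Up to that point you have reproduced the structure of the paper's proof.

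Where you depart is the finishing move, and there your proposal has a genuine gap that you yourself flag. You organize the $R^N$'s into a finitely-branching tree and invoke K\"onig's Lemma to extract a descending chain $r^1\ge r^2\ge\cdots$ with $r^N\in Q_m$ and $r^N\perp p_j$ for $j<N$, and you then correctly observe that nothing so far produces a condition of $\Por$ below that chain: the Boolean infimum is $0$ and ccc gives no lower bound. This is precisely the obstacle that the flash property~(\ref{Miropr}) does \emph{not} remove. The K\"onig-plus-lower-bound argument is the one used for the \emph{club} property in the proof of Lemma~\ref{Br:lemma}, and it succeeds there only because clause~(I) of~(\ref{Brtrevol}) explicitly guarantees that decreasing sequences of bounded height have lower bounds; the flash property has no such clause, so your planned ``delicate construction'' bridging the Boolean limit to an explicit $\Por$-witness cannot be built from the stated hypotheses, and a further application of~(I) or~(II) to a cleverly chosen finite antichain will not manufacture one.

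The paper avoids the descending chain entirely. Its finishing move is a pigeonhole against the single finite set $R_0$: from the observation that every element of $R_n$ lies below some element of $R_0$, one replaces $R_n$ by $R_n':=\{r\in R_0:\exists p\in R_n\,(p\le r)\}\subseteq R_0$, checks that the replaced sets still carry the covering data of~($\oplus$), and then uses maximality of the antichain to find $m_0$ with $R_0\not\perp P_{m_0}$ pointwise; since $R_{m_0}\subseteq R_0$, this forces $R_{m_0}=\emptyset$, which is exactly the desired leaf-linkedness at stage $m_0$. No lower bound for an infinite chain is ever constructed. So to complete your proof you should abandon the branch-through-the-tree plan and instead push everything into $R_0$: replace each $R^N$ by its ``image'' inside $R^0$ and observe that a finite subset of $R^0$ whose members all eventually meet the antichain cannot survive past the stage $m_0$ at which all of $R^0$ has been caught. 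That substitution is the combinatorial core you were looking for, and it is a genuinely different idea from the K\"onig's Lemma route.
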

\begin{proof}
For $m\in\omega$ let \[Q_m:=\set{p\in\Por}{h(p)\leq m}\textrm{\ and\ } Q:=\bigcup_{m\in\omega}Q_m.\] 
To prove the lemma, it suffices to verify the property~\ref{solft:onec} in~\autoref{solft:one}
for $\theta=\omega$, for the height function $h$, and for the function
$g(n,m)=n+m$.
Condition~(I) is fulfilled by property~(II).
To prove the condition~(i) we must prove that every set $Q_m$ is leaf-linked. Let $P=\set{p_k}{k\in\omega}$ be arbitrary maximal antichain in~$\Pbb$.
Denote $P_n=\set{p_k}{k\le n}$.
By (I) of~(\ref{Miropr}), for every set~$P_n$ there is some finite subset $R_n$ of $\Por$ such that
$R_n\perp P_n$ and $\forall p\in Q_m\,[p\perp P_n\Rightarrow\exists r\in R_n\,(p\le r)]$.
We can assume that $R_n\in[Q_m]^{<\omega}$ because
$(p\in Q_m$ and $p\le r)\Rightarrow r\in Q_m$.
Therefore,
\begin{equation*}
\forall n\in\omega\, \exists R_n\in[Q_m]^{<\omega}\smallsetminus\,\{\emptyset\}\,
[R_n\perp P_n\text{ and }
\forall p\in Q_m\, (p\perp P_n\Rightarrow\exists r\in R_n\, (r\le p))].
\tag{$\oplus$}
\label{ch:soft}
\end{equation*}
By~\eqref{ch:soft} for every $p\in R_{n+1}$ there is $r\in R_n$ such that $r\le p$
(because $p\in Q_m$ and $p\perp P_n$).
In this way, by induction, we prove that
$\forall n\in\omega\,\forall p\in R_n\,\exists r\in R_0\,(r\le p)$.
Denote by $R_n'$ the set $\set{r\in R_0}{\exists p\in R_n\,(r\le p)}$.
Then~(\ref{ch:soft}) holds with the sets $R_n'$ instead of~$R_n$
and therefore without loss of generality, we can assume that $R_n\subseteq R_0$
for all $n\in\omega$.
Since $P$~is a~maximal antichain, there is $m_0\in\omega$
such that $\forall p\in R_0\,(p\not\perp P_{m_0})$. Then it follows that $R_{m_0}=\emptyset$ because $R_{m_0}\perp P_{m_0}$ and
$R_{m_0}\subseteq R_0$.
Then by~(\ref{ch:soft}) for $n=m_0$ we get $\forall p\in Q_m\,(p\not\perp P_{m_0})$.
This finishes the proof that $Q_m$~is leaf-linked.
\end{proof}

\begin{remark}\label{var}
Consider the following variants of (I) of~(\ref{Miropr}):
\begin{enumerate}[label=\rm (I$^\arabic*$)]
\item\label{var1}
$\forall m\in\omega\,\exists R\in[\Pbb]^{<\omega}\,\forall p\in\Pbb\,[h(p)\le m\Rightarrow\exists r\in R\,(p\le r)]$.\smallskip
\item\label{var2}
$\forall m\in\omega\,\forall q\in\Pbb\smallsetminus\{1_\Pbb\}\,\exists R\in[\Pbb]^{<\omega}\,(R\perp q)$ and $\forall p\in\Pbb\,[(p\perp q$ and $h(p)\le m)\Rightarrow\exists r\in R\,(p\le r)]$.\smallskip
\item\label{var3}
$\forall m\in\omega\,(\forall q',q\in\Pbb$ with $q\le q'$ and $q'\not\leq^* q)\,\exists R\in[\Pbb]^{<\omega}\,(R\le q'\textrm{\ and\ }R\perp q$) and $\forall p\in\Pbb\,[(p\le q'$ and $p\perp q$ and $h(p)\le m)\Rightarrow\exists r\in R\,(p\le r)]$.
\end{enumerate}
Note that~\ref{var1} and~\ref{var2} are instances
of~(I) for $P=\emptyset$ and $|P|=1$, respectively.
One can observe that
\begin{itemize}
\item
$\text{(I)}\Rightarrow$~\ref{var2};\smallskip
\item \ref{var3}${}\Leftrightarrow\forall q'\in\Pbb$ $(\Pbb|q'$
satisfies~\ref{var2}); \smallskip
\item
if $\Pbb$ has the largest element, then
\ref{var3}\/ $\Rightarrow$~\ref{var2}.
\end{itemize}
\end{remark}

We also consider the following property of pairs $\la\Pbb,h\ra$ where $\Por$ is a poset and and~$h$ a height function on $\Por$: Say that $\la\Pbb,h\ra$ has the \emph{club property} if:
\begin{equation}
 \parbox{0.8\textwidth}{
\begin{enumerate}[label=\rm(\Roman*)]

   \item if $\set{p_n}{n<\omega}$ is decreasing and $\exists m\in\omega\,\forall n\in\omega\,(h(p_n)\leq m)$, then $\exists p\in\Por\,\forall n\in\omega\,(p\leq p_n)$;\smallskip

     \item $\forall m\in\omega\,\forall P\in[\Por]^{<\omega}\smallsetminus\{\emptyset\}\,\exists R\in[\Por]^{<\omega}\,(R\perp P)$ and $\forall p\in\Por\,(h(p)\leq m)\,[p\perp P\Rightarrow\exists r\in R\,(p\le r)]$;\smallskip

    \item if $p, q\in\Por$ are compatible, then there is $r\leq p,q$ so that $h(r)\leq h(p)+h(q)$.
   
\end{enumerate}
 }
 \label{Brtrevol}
 \tag{$\clubsuit$}
 \end{equation}

The proof of the coming lemma relies essentially on ~\cite[Lem.~1.2]{BrJ}.

\begin{lemma}\label{Br:lemma}
A~ccc poset $\Por$ with a~height function $h\colon\Por\to\omega$
satisfying~\emph{(\ref{Brtrevol})} is $\sigma$-soft-linked.
\end{lemma}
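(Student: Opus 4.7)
The plan is to apply~\autoref{solft:one} with $\theta=\omega$, $Q:=\Pbb$, the given height function $h$, and $g(n,k):=n+k$, in order to verify condition~\ref{solft:onec} and hence conclude $\sigma$-soft-linkedness via~\ref{solft:onea}. Item~(ii) of~\ref{solft:onec} is exactly clause~(III) of~(\ref{Brtrevol}) with this $g$, so the only task left is to verify item~(i): that for every $m<\omega$ the set
\[Q_m:=\set{p\in\Pbb}{h(p)\leq m}\]
is leaf-linked.

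Fix $m<\omega$ and suppose, for a~contradiction, that $Q_m$ is not leaf-linked; fix a~maximal antichain $\set{p_k}{k<\omega}\subseteq\Pbb$ witnessing this, so that for every $n<\omega$ some $q\in Q_m$ satisfies $q\perp P_n$, where $P_n:=\set{p_k}{k\leq n}$. For each $n$, apply clause~(II) of~(\ref{Brtrevol}) to $m$ and the finite non-empty set $P_n$ to obtain a finite $R_n\subseteq\Pbb$ with $R_n\perp P_n$ such that every $p\in Q_m$ with $p\perp P_n$ admits some $r\in R_n$ with $p\leq r$. Since $h$ is monotone, $p\leq r$ forces $h(r)\leq h(p)\leq m$, so we may assume $R_n\subseteq Q_m$; moreover, the failure of leaf-linkedness guarantees $R_n\neq\emptyset$ for every $n$.

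The key step, in the spirit of~\cite[Lem.~1.2]{BrJ}, is to extract an infinite $\leq$-descending chain with one representative in each $R_n$. The iteration trick is this: if $p\in R_{n+1}\subseteq Q_m$, then $p\perp P_{n+1}\supseteq P_n$, so the defining property of $R_n$ produces some $r\in R_n$ with $p\leq r$. Iterating, for every $N<\omega$ one obtains a chain $r_N\leq r_{N-1}\leq\dots\leq r_0$ with $r_i\in R_i$. Consider the tree
\[T:=\bigcup_{N<\omega}\set{(r_0,\dots,r_N)}{r_i\in R_i\text{ and }r_{i+1}\leq r_i\text{ for all }i<N};\]
since each $R_n$ is finite, $T$ is finitely branching and contains nodes at every finite level, so K\"onig's lemma delivers an infinite branch $\seq{r_n}{n<\omega}$ with $r_n\in R_n\subseteq Q_m$ and $r_{n+1}\leq r_n$.

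Finally, clause~(I) of~(\ref{Brtrevol}) applies to the bounded-height descending sequence $\seq{r_n}{n<\omega}$, yielding some $r^\infty\in\Pbb$ with $r^\infty\leq r_n$ for every $n<\omega$. Incompatibility is inherited by stronger conditions, so $r_n\perp P_n$ gives $r^\infty\perp p_k$ for every $k<\omega$, contradicting the maximality of the antichain $\set{p_k}{k<\omega}$. Hence $Q_m$ is leaf-linked, and the appeal to~\autoref{solft:one} completes the argument. The main obstacle I anticipate is precisely the construction of this descending chain: a naive pigeonhole on the witnesses $q$ only yields pairwise compatible, not pairwise comparable, elements of the $R_n$, which is why one must iterate the Brendle--Judah map $R_{n+1}\to R_n$ and invoke K\"onig's lemma to forge a single chain along which clause~(I) can fire.
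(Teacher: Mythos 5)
Your proof is correct and follows essentially the same strategy as the paper's: verify condition~\ref{solft:onec} of~\autoref{solft:one} with $Q_m=\set{p}{h(p)\le m}$ and $g(n,k)=n+k$, obtain item~(ii) from clause~(III), and show each $Q_m$ is leaf-linked by applying clause~(II) to the initial segments $P_n$, organizing the resulting nonempty finite sets $R_n\subseteq Q_m$ into a finitely branching tree via the map $R_{n+1}\to R_n$, invoking K\"onig's lemma to extract a descending chain, and using clause~(I) to produce a lower bound contradicting maximality of the antichain. Your write-up is, if anything, slightly more explicit than the paper's about why $R_n\subseteq Q_m$ (monotonicity of $h$) and about the tree/K\"onig step, but the argument is the same.
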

\begin{proof}
For $m\in\omega$ let \[Q_m:=\set{p\in\Por}{h(p)\leq m}\textrm{\ and\ } Q:=\bigcup_{m\in\omega}Q_m.\] 
We verify \ref{up-softc} of~\autoref{solft:one} for $\theta=\omega$, for the height function $h$, and for the function $g(n,m)=n+m$. The only nontrivial condition is (i), so we show it, i.e., that each $Q_m$ is leaf-linked. To this, assume that $\set{p_n}{n\in\omega}\subseteq\Por$ is a  maximal antichain in $\Por$ and show that there is some $n\in\omega$ such that $\forall q \in Q_m\,\exists j<n\,(q\,||\,p_j)$. Suppose not. Then 
for each $n\in\omega$ let $R_n$ be a finite subset of $\Por$ obtained by employing~(II) of~(\ref{Brtrevol}) to the family $P=\set{p_i}{i<n}$. For each $n\in\omega$, enumerate $R_n$ as $\set{q_j^n}{j<k_n}$ for $k_n<\omega$. 

By assumption, none of these sets can be empty and we can assume that $q_j^n\in Q_m$ for all $j$,~$n$. By~applying~(II) of~(\ref{Brtrevol}) they form
an $\omega$-tree with finite levels with respect to $``\leq"$. Then by Konig's lemma, there is a function $f\in\omega^\omega$ such that $q^0_{f(0)}\leq q^1_{f(1)}\leq q^3_{f(3)}\leq\cdots.$ By~(I) of~(\ref{Brtrevol}) there is a condition $q\leq q_{f(n)}^n$ for all $n$, contradicting
the fact that $\set{p_n}{n\in\omega}$ is a maximal antichain.   
\end{proof}

\subsection{Weaknnes versions of Soft-linkedness}
\

Continuing our study of soft-linkedness, we collect now two new weak notions concerning not adding predictors. The following definitions are related to~\autoref{maindef} and point in this direction.

\begin{definition}\label{def:wsolf}
 Let $\Por$ be a forcing notion and $\theta$ be a cardinal number.
\begin{enumerate}[label=\rm (\arabic*)]
    \item\label{def:upsoft} We say that \emph{$\Pbb$ is $\theta$-\up-soft-linked} if there is a~sequence
$\seq{Q_\alpha}{\alpha<\theta}$ of upwards closed leaf-linked subsets
of~$\Pbb$ such that\smallskip
\begin{enumerate}[label=\rm (\alph*)]
    \item\label{def:upsofta} $\bigcup_{\alpha<\theta}Q_\alpha$ is dense in~$\Pbb$, and \smallskip

    \item\label{def:upsoftb} \autoref{maindef}~\ref{maindefb}~\ref{maindefbc} holds.\smallskip
    
\end{enumerate}

    \item\label{def:rupsoft} We say that \emph{$\Pbb$ is $\theta$-\mup-soft-linked} if there is an increasing
sequence $\seq{ Q_\alpha}{\alpha<\theta}$ of upwards closed leaf-linked
subsets of~$\Pbb$ such that\smallskip
\begin{enumerate}[label=\rm (\alph*)]
    \item\label{def:rupsofta} $\bigcup_{\alpha<\theta}Q_\alpha$ is dense in~$\Pbb$ and \smallskip

    \item\label{def:rupsoftb} \autoref{maindef}~\ref{maindefb}~\ref{maindefbc} holds.
\end{enumerate}
\end{enumerate} 
Notice that $\theta$-soft-linked\/ $\Rightarrow\theta$-\up-soft-linked\/   $\Rightarrow\theta$-\mup-soft-linked.
\end{definition}

Just as in~\autoref{solft:one}, we have:

\begin{lemma}\label{up-soft}
For a~poset\/~$\Pbb$ and a cardinal number $\theta$ consider the following properties:
\begin{enumerate}[label=\rm (\arabic*)]
\item\label{up-softa}
$\Pbb$ is $\theta$-\up-soft-linked.\smallskip
\item\label{up-softb}
There is a~dense set $Q\subseteq\Pbb$ and there are
functions $h\colon Q\to\theta$ and $g\colon\theta\times\theta\to\theta$ with the
following properties:
\begin{enumerate}[label=\rm (\roman*)]
\item
For every $\alpha<\theta$ the set\/ $\{p\in Q\mid h(p)=\alpha\}$ is leaf-linked.
\item
$\forall p,q\in Q$
$[p\parallel q\Rightarrow
\exists r\in Q$ $(r\le p,q$ and $h(r)\le g(h(p),h(q)))]$.
\item
$\forall p,q\in Q$ $q\le p\Rightarrow h(q)\ge h(p)$.\smallskip
\end{enumerate}
\item\label{up-softc}
There is a~dense set $Q\subseteq\Pbb$ and there are
functions $h\colon Q\to\theta$ and $g\colon\theta\times\theta\to\theta$ with the
following properties:
\begin{enumerate}[label=\rm (\roman*)]
\item
For every $\alpha<\theta$ the set\/ $\{p\in Q\mid h(p)\le\alpha\}$ is
leaf-linked.
\item
$\forall p,q\in Q$
$[p\parallel q\Rightarrow
\exists r\in Q$ $(r\le p,q$ and $h(r)\le g(h(p),h(q)))]$.
\item
$\forall p,q\in Q\,(q\le p\Rightarrow h(q)\ge h(p))$.
\end{enumerate}
\end{enumerate}
Then $\ref{up-softc}\Rightarrow\ref{up-softb}$ and $\ref{up-softa}\Rightarrow\ref{up-softb}$;
$\ref{up-softc}\Rightarrow\ref{up-softa}$ for regular~$\theta$; and
$\ref{up-softa}\Leftrightarrow\ref{up-softb}\Leftrightarrow\ref{up-softc}$ for $\theta=\omega$.
\end{lemma}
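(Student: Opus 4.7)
The plan is to adapt the proof of~\autoref{solft:one} essentially verbatim, with the sole new ingredient being a~two-way correspondence between ``upwards closure of $Q_\alpha$'' and ``monotonicity clause~(iii) of~$h$'' with respect to the forcing order (recall that $q\le p$ means $q$~is stronger than~$p$). The implication $\ref{up-softc}\Rightarrow\ref{up-softb}$ is immediate, since $\set{p\in Q}{h(p)=\alpha}$ is a~subset of the leaf-linked set $\set{p\in Q}{h(p)\le\alpha}$, and leaf-linkedness is inherited by subsets (the defining condition ``$\forall p\in Q\,\exists j<n\ldots$'' only gets weaker on a~subset).

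For $\ref{up-softa}\Rightarrow\ref{up-softb}$, I~copy the corresponding step of~\autoref{solft:one}: set $Q=\bigcup_{\alpha<\theta}Q_\alpha$, put $h(p)=\min\set{\alpha<\theta}{p\in Q_\alpha}$, and extract~$g$ from clause~\ref{maindefbc} of~\autoref{maindef}~\ref{maindefb}; conditions~(i) and~(ii) are verified exactly as there, while the new monotonicity~(iii) follows from upwards closure, because if $q\le p$ and $h(q)=\beta$, then $q\in Q_\beta$ forces $p\in Q_\beta$, whence $h(p)\le\beta=h(q)$. Conversely, for $\ref{up-softc}\Rightarrow\ref{up-softa}$ with $\theta$~regular, I~set $Q_\alpha=\set{p\in Q}{h(p)\le\alpha}$; the upwards closure of each $Q_\alpha$ is the other reading of~(iii) (if $q\in Q_\alpha$ and $q\le p$, then $h(p)\le h(q)\le\alpha$), leaf-linkedness comes from~(i), density of $\bigcup_{\alpha<\theta}Q_\alpha$ is immediate, and clause~\ref{maindefbc} of~\autoref{maindef}~\ref{maindefb} is verified as in~\autoref{solft:one} by taking $\alpha^*=\sup\set{g(\xi,\eta)}{\xi,\eta\le\max\{\alpha,\alpha'\}}$, which stays below~$\theta$ by regularity.

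Finally, for $\theta=\omega$ the direction $\ref{up-softb}\Rightarrow\ref{up-softc}$ follows from the observation that $\set{p\in Q}{h(p)\le n}=\bigcup_{k\le n}\set{p\in Q}{h(p)=k}$ is a~finite union of leaf-linked sets, which is itself leaf-linked (given a~maximal antichain, take the maximum of the finitely many witnesses). I~do not anticipate a~genuine obstacle: every step is a~direct transcription of~\autoref{solft:one}, with the monotonicity clause~(iii) tracking the upwards-closure requirement in both directions.
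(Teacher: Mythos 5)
Your proposal matches the paper's own proof essentially line by line: the same choice of $Q$, $h$, and $g$ in each direction, the same $\alpha^* = \sup\set{g(\xi,\eta)}{\xi,\eta\le\max\{\alpha,\alpha'\}}$ with regularity, and the same finite-union observation for $\theta=\omega$. The one genuinely new point over~\autoref{solft:one}, namely that clause~(iii) and upwards closure of the $Q_\alpha$'s translate into one another in both directions, is exactly what the paper records, so the proof is correct and takes the same route.
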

\begin{proof}
$\ref{up-softc}\Rightarrow\ref{up-softb}$ is trivial.

\noindent$\ref{up-softa}\Rightarrow\ref{up-softb}$
Let $\langle Q_\alpha\mid\alpha<\theta\rangle$ be a~sequence of upwards closed
 subsets of~$\Pbb$ witnessing that $\Pbb$~is $\theta$-\up-soft-linked.
The set $Q=\bigcup_{\alpha<\theta}Q_\alpha$ is a~dense subset of~$\Pbb$.
For $p\in Q$ define
\[h(p)=\min\{\alpha<\theta\mid p\in Q_\alpha\}.\]
The condition (iii) holds because the sets $Q_\alpha$ are upwards closed.
The condition (i)~holds because every set $\{p\in Q\mid h(p)=\alpha\}$ is a~subset
of the leaf-linked set~$Q_\alpha$ and is therefore leaf-linked.
By using~\ref{def:upsoft}~\ref{def:upsoftb} of~\autoref{def:wsolf} for every $\alpha,\beta<\theta$ define
\[g(\alpha,\beta)=\min\set{\gamma<\theta}{\forall p\in Q_\alpha\,\forall q\in Q_\beta\,[p\parallel q\Rightarrow\exists r\in Q_\gamma\,(r\le p,q)]}.\]
Assume $p,q\in Q$ and $p\parallel q$.
Then $p\in Q_{h(p)}$ and $q\in Q_{h(q)}$ and so there is $r\in Q_{g(h(p),h(q))}$
such that $r\le p$ and $r\le q$.
By the definition of~$h$, $h(r)\le g(h(p),h(q))$ and hence (ii)~holds.

\noindent$\ref{up-softc}\Rightarrow\ref{up-softa}$ for regular~$\theta$.
By (i) and~(iii) for every $\alpha<\theta$ the set
$Q_\alpha=\set{p\in Q}{h(p)\le\alpha}$ upwards closed leaf-linked and
$Q=\bigcup_{\alpha<\theta}Q_\alpha$ is a~dense subset
of~$\Pbb$.
We verify~\ref{def:upsoft}~\ref{def:upsoftb} of~\autoref{def:wsolf}.
Fix $\alpha,\alpha'\le\theta$.
Define \[\alpha^*=\sup\{g(\xi,\eta)\mid\xi,\eta\le\max\{\alpha,\alpha'\}\}.\]
Note that $\alpha^*<\theta$ because $\theta$~is regular.
Assume $p\in Q_\alpha$, $q\in Q_{\alpha'}$, and $p\parallel q$.
Then $g(h(p),h(q))\le\alpha^*$ because $h(p)\le\alpha$ and $h(q)\le\alpha'$.
By~(ii) there is $r\le p,q$ such that $h(r)\le g(h(p),h(q))\le\alpha^*$ and
hence $r\in Q_{\alpha^*}$.

\noindent$\ref{up-softb}\Rightarrow\ref{up-softc}$ for $\theta=\omega$.
Note that $\set{p\in Q}{h(p)\le n}$ is a~finite union of leaf-linked sets
$\set{p\in Q}{h(p)=k}$ for $k\le n$ and is therefore leaf-linked.
\end{proof}

The following lemma presents another result similar to the previous one.

\begin{lemma}\label{mup-soft}
Let $\Por$ be forcing notion and let $\theta$ be a~cardinal number. Assume that $\Pbb$ is $\theta$-\mup-soft-linked. Then there is a~dense set $Q\subseteq\Pbb$ and there are 
functions $h\colon Q\to\theta$ and $g\colon\theta\times\theta\to\theta$ with the
following properties:
\begin{enumerate}[label=\rm(\roman*)]
\item
For every $\alpha<\theta$, the set\/ $\{p\in Q\mid h(p)\le\alpha\}$ is
leaf-linked.
\item
$\forall p,q\in Q$
$[p\parallel q\Rightarrow
\exists r\in Q$ $(r\le p,q$ and $h(r)\le g(h(p),h(q)))]$.
\item
$\forall p,q\in Q\,(q\le p\Rightarrow h(q)\ge h(p))$.
\end{enumerate}

If $\theta$ is regular, the converse holds.
\end{lemma}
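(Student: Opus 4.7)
The proof closely follows the pattern of \autoref{up-soft}, with the key new feature being that the increasing nature of the witnessing sequence upgrades condition~(i) from the form ``$\set{p\in Q}{h(p)=\alpha}$ leaf-linked'' (as in the non-monotone ``up'' case, cf.~\ref{up-softb} of~\autoref{up-soft}) to the stronger form ``$\set{p\in Q}{h(p)\le\alpha}$ leaf-linked''. I would prove the forward implication in ZFC and then verify the converse under the assumption that $\theta$ is regular.

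\textbf{Forward direction.} Let $\seq{Q_\alpha}{\alpha<\theta}$ be an increasing sequence of upwards closed leaf-linked subsets of $\Pbb$ witnessing $\theta$-\mup-soft-linkedness, and put $Q=\bigcup_{\alpha<\theta}Q_\alpha$ and $h(p)=\min\set{\alpha<\theta}{p\in Q_\alpha}$ for $p\in Q$. Property~(i) is immediate from monotonicity: if $h(p)\le\alpha$ then $p\in Q_{h(p)}\subseteq Q_\alpha$, so $\set{p\in Q}{h(p)\le\alpha}\subseteq Q_\alpha$, and a subset of a leaf-linked set is leaf-linked. For property~(iii), suppose $q\le p$ in $Q$ and $h(q)<h(p)$; then $q\in Q_{h(q)}$ and the upwards closedness of $Q_{h(q)}$ applied to $q\le p$ would force $p\in Q_{h(q)}$, contradicting the minimality of $h(p)$. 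For property~(ii), invoking \autoref{maindef}~\ref{maindefb}~\ref{maindefbc} (which is condition~\ref{def:rupsoftb} of \autoref{def:wsolf}~\ref{def:rupsoft}), define
\[
g(\alpha,\beta)=\min\set{\gamma<\theta}{\forall p\in Q_\alpha\,\forall q\in Q_\beta\,[p\parallel q\Rightarrow\exists r\in Q_\gamma\,(r\le p,q)]};
\]
then for compatible $p,q\in Q$ there is $r\in Q_{g(h(p),h(q))}$ below both, and by definition of $h$ we obtain $h(r)\le g(h(p),h(q))$.

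\textbf{Converse for regular $\theta$.} Given $Q$, $h$, $g$ satisfying (i)--(iii), set $Q_\alpha=\set{p\in Q}{h(p)\le\alpha}$. This sequence is increasing, each term is leaf-linked by~(i), and $\bigcup_{\alpha<\theta}Q_\alpha=Q$ is dense in $\Pbb$. Upwards closedness of $Q_\alpha$ comes directly from~(iii): if $p\in Q_\alpha$ and $p\le q$, then $h(q)\le h(p)\le\alpha$. To verify condition~\ref{def:rupsoftb}, fix $\alpha,\alpha'<\theta$ and define
\[
\alpha^*=\sup\set{g(\xi,\eta)}{\xi,\eta\le\max\{\alpha,\alpha'\}}.
\]
Here regularity of $\theta$ is used (and indispensable): the set inside the supremum has cardinality strictly less than $\theta$, so $\alpha^*<\theta$. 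Then for compatible $p\in Q_\alpha$ and $q\in Q_{\alpha'}$, property~(ii) produces $r\le p,q$ with $h(r)\le g(h(p),h(q))\le\alpha^*$, i.e.\ $r\in Q_{\alpha^*}$. The only real subtlety throughout is keeping straight the forcing-theoretic direction of ``upwards closed'' (closed under weakening conditions, i.e., $q\in Q_\alpha$ and $q\le p$ imply $p\in Q_\alpha$), but this is precisely what dovetails with the monotonicity condition~(iii); everything else is a routine transcription of the argument for \autoref{up-soft}.
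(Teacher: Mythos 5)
Your proof is correct and follows essentially the same route as the paper's: the same choice of $Q$, $h(p)=\min\set{\alpha}{p\in Q_\alpha}$, and $g(\alpha,\beta)$ defined via a minimum, the same appeal to~\ref{def:rupsoft}~\ref{def:rupsoftb} of~\autoref{def:wsolf}, and the same use of regularity of $\theta$ to keep $\alpha^*<\theta$ in the converse. The only cosmetic difference is in condition~(i), where the paper observes the equality $\set{p\in Q}{h(p)\le\alpha}=Q_\alpha$ while you use only the inclusion $\subseteq$ plus the fact that subsets of leaf-linked sets are leaf-linked — both are fine.
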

\begin{proof}
 Let $\langle Q_\alpha\mid\alpha<\theta\rangle$ be an increasing sequence of
upwards closed subsets of~$\Pbb$ witnessing that $\Pbb$~is 
$\theta$-\mup-soft-linked.
The set $Q=\bigcup_{\alpha<\theta}Q_\alpha$ is a~dense subset of~$\Pbb$.
For $p\in Q$ define
\[h(p)=\min\set{\alpha<\theta}{p\in Q_\alpha}.\]
Condition~(iii) holds because the sets $Q_\alpha$ are upwards
closed.
Condition (i)~holds because $\{p\in Q\mid h(p)\le\alpha\}=Q_\alpha$.
By using~\ref{def:rupsoft}~\ref{def:rupsoftb} of~\autoref{def:wsolf} for every $\alpha,\beta<\theta$ define
\[g(\alpha,\beta)=\min\set{\gamma<\theta}{\forall p\in Q_\alpha\,\forall q\in Q_\beta\,[p\parallel q\Rightarrow\exists r\in Q_\gamma\,(r\le p,q)]}.\]
Assume $p,q\in Q$ and $p\parallel q$.
Then $p\in Q_{h(p)}$ and $q\in Q_{h(q)}$ and so there is $r\in Q_{g(h(p),h(q))}$
such that $r\le p$ and $r\le q$.
By definition of~$h$, $h(r)\le g(h(p),h(q))$ and hence (ii)~holds.

For the converse, let $Q_\alpha=\set{p\in Q}{h(p)\le\alpha}$ for $\alpha<\theta$.
By (i) and~(iii), $\langle Q_\alpha\mid\alpha<\theta\rangle$ is an increasing
sequence of upwards closed sets and
$Q=\bigcup_{\alpha<\theta}Q_\alpha$ is a~dense subset of~$\Pbb$.
We now verify~\ref{def:rupsoft}~\ref{def:rupsoftb} of~\autoref{def:wsolf}.
Fix $\alpha,\alpha'\le\theta$.
Define \[\alpha^*=\sup\{g(\xi,\eta)\mid\xi,\eta\le\max\{\alpha,\alpha'\}\}.\]
Notice first that $\alpha^*<\theta$ because $\theta$~is regular.
Assume $p\in Q_\alpha$, $q\in Q_{\alpha'}$, and $p\parallel q$.
Then $g(h(p),h(q))\le\alpha^*$ because $h(p)\le\alpha$ and $h(q)\le\alpha'$.
By~(ii) there is $r\le p,q$ such that $h(r)\le g(h(p),h(q))\le\alpha^*$ and
hence $r\in Q_{\alpha^*}$.
\end{proof}

Combining the above theorem with~\autoref{mup-soft} yields:

\begin{corollary}
 Let $\theta$ be a regular cardinal. Then $\theta$-\mup-soft-linked matches $\theta$-\up-soft-linked.
 \qed
\end{corollary}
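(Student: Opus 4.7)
The plan is to combine~\autoref{up-soft} and~\autoref{mup-soft}. For regular~$\theta$, \autoref{mup-soft} provides a~full characterization of $\theta$-\mup-soft-linkedness: it is equivalent to the existence of a~dense set $Q\subseteq\Pbb$ together with functions $h\colon Q\to\theta$ and $g\colon\theta\times\theta\to\theta$ satisfying (i)~sub-level leaf-linkedness of $\{p\in Q : h(p)\le\alpha\}$, (ii)~$g$-bounded compatibility, and (iii)~$h$-monotonicity with respect to the poset order. These three clauses are syntactically identical to condition~\ref{up-softc} of~\autoref{up-soft}.

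For the direction $\theta$-\mup-soft-linked $\Rightarrow$ $\theta$-\up-soft-linked, I~would take an $\uparrow$up-soft-linked witness, extract the triple $(Q,h,g)$ via the (always-valid) forward direction of~\autoref{mup-soft}, and then invoke the implication $\ref{up-softc}\Rightarrow\ref{up-softa}$ from~\autoref{up-soft}, which is valid for regular~$\theta$. This immediately yields $\theta$-up-soft-linkedness.

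The reverse direction, $\theta$-\up-soft-linked $\Rightarrow$ $\theta$-\mup-soft-linked, is exactly the chain of implications recorded immediately after~\autoref{def:wsolf}, so no further work is needed for it.

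The main obstacle I~anticipate is merely bookkeeping, since the output of~\autoref{mup-soft} matches clause~\ref{up-softc} of~\autoref{up-soft} verbatim. The genuine use of the regularity of~$\theta$ is already packaged into both lemmas through the $\sup$-construction $\alpha^*=\sup\set{g(\xi,\eta)}{\xi,\eta\le\max\{\alpha,\alpha'\}}<\theta$, which keeps the compatibility bounds below~$\theta$; as this step is the only place regularity is essential, no extra argument is needed beyond quoting the two lemmas.
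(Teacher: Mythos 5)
Your plan inverts which direction of the equivalence is trivial and which one carries the content, and the substantive direction ends up resting on an unjustified citation. The implication $\theta\text{-}\mup\text{-soft-linked}\Rightarrow\theta\text{-}\up\text{-soft-linked}$ needs no machinery at all: an increasing sequence of upwards-closed leaf-linked sets satisfying \autoref{def:wsolf}~\ref{def:rupsoft}~\ref{def:rupsofta}--\ref{def:rupsoftb} is in particular a sequence satisfying \autoref{def:wsolf}~\ref{def:upsoft}~\ref{def:upsofta}--\ref{def:upsoftb}, so you get the conclusion by forgetting the monotonicity of the sequence. Running it through the forward direction of \autoref{mup-soft} and then $\ref{up-softc}\Rightarrow\ref{up-softa}$ of \autoref{up-soft} is correct but spends the lemmas on the one direction that does not need them.

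The direction that actually requires an argument is $\theta\text{-}\up\text{-soft-linked}\Rightarrow\theta\text{-}\mup\text{-soft-linked}$, and here your proposal has a genuine gap. You dispatch it by pointing to the chain of implications stated right after \autoref{def:wsolf}; but that statement is asserted without proof, it is formulated for arbitrary $\theta$ (so if it were available for free the regularity hypothesis of the corollary would be vacuous), and inspection of the definitions shows that passing from an arbitrary sequence of upwards-closed leaf-linked sets to an \emph{increasing} one is exactly where the work lies --- for instance, the naive replacement $Q'_\alpha=\bigcup_{\beta\le\alpha}Q_\beta$ produces an increasing sequence of upwards-closed sets, but a union of $|\alpha|$ many leaf-linked sets need not be leaf-linked once $\alpha$ is infinite. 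The intended argument, in the spirit of the paper's pointer to \autoref{up-soft} together with \autoref{mup-soft}, is to extract from the \up{}-soft-linked witness a triple $(Q,h,g)$ and feed it into the converse of \autoref{mup-soft}; that converse demands that the \emph{sub-level} sets $\{p\in Q\mid h(p)\le\alpha\}$ be leaf-linked, which is condition \ref{up-softc} and not the condition \ref{up-softb} that $\ref{up-softa}\Rightarrow\ref{up-softb}$ of \autoref{up-soft} hands you. Bridging from \ref{up-softb} to \ref{up-softc} is where regularity (or $\theta=\omega$) must enter, and this is precisely the step your proposal silently skips by citing the Notice.
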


\section{Not adding predictors}

The purpose of this section is to deal with the preservation of $\efrak$  along FS iterations of $\theta$-soft-linked posets, which is based on Brendle's ideas to prove the consistency of $\efrak<\eubd$ in~\cite[Sec.4.3]{BrendlevasionI}. We shall prove, in fact, that $\theta$-soft-linked posets do not add predictors, so they do not increase $\efrak$ (see~\autoref{thm:preva}).  We also provide various examples of $\sigma$-soft-linked posets in this section.

\subsection{A brief summary of preservation theory}
\

We begin with some basic definitions but necessary to develop our goodness property:

We say that $\Rbf=\la X, Y, {\sqsubset}\ra$ is a \textit{relational system} if it consists of two non-empty sets $X$ and $Y$ and a relation ${\sqsubset}$.
\begin{enumerate}[label=(\arabic*)]
    \item A set $F\subseteq X$ is \emph{$\Rbf$-bounded} if $\exists\, y\in Y\ \forall\, x\in F\colon x \sqsubset y$. \smallskip
    \item A set $D\subseteq Y$ is \emph{$\Rbf$-dominating} if $\forall\, x\in X\, \exists\, y\in D\colon x \sqsubset y$. \smallskip
    \item Let $M$ be a set. Say that $x$ is \emph{$\Rbf$-unbounded over $M$} if $\forall\, y \in  Y\cap M\colon \neg(x \sqsubset y)$.
\end{enumerate}

We associate two cardinal characteristics with this relational system $\Rbf$: 
\begin{align*}
    \bfrak(\Rbf) & :=\min\set{|F|}{ F\subseteq X  \text{ is }\Rbf\text{-unbounded}} \text{ the \emph{unbounding number of $\Rbf$}, and}\\
    \dfrak(\Rbf) & :=\min\set{|D|}{ D\subseteq Y \text{ is } \Rbf\text{-dominating}} \text{ the \emph{dominating number of $\Rbf$}.}
\end{align*}

We now introduce two important concepts before providing some examples of $\sigma$-soft-linked posets. These concepts were originally introduced by Judah and Shelah~\cite{JS}, they were improved by Brendle~\cite{Br}, and generalized by the author and Mej\'ia~\cite[Sect.~4]{CM}.


\begin{definition}\label{def:Prs}
We say that $\Rbf=\langle X,Y,\sqsubset\rangle$ is a \textit{generalized Polish relational system (gPrs)} if
\begin{enumerate}[label=\rm(\arabic*)]
\item $X$ is a perfect Polish space,\smallskip
\item $Y=\bigcup_{e\in \Omega}Y_e$ where $\Omega$ is a non-empty set and, for some Polish space $Z$, $Y_e$ is non-empty and analytic in $Z$ for all $e\in \Omega$, and\smallskip
\item \label{def:Prsc}${\sqsubset}=\bigcup_{n<\omega}{\sqsubset_{n}}$ where $\seq{{\sqsubset_{n}}}{ n<\omega}$  is an increasing sequence of closed subsets of $X\times Y$ such that, for any $n<\omega$ and for any $y\in Y$,
$(\sqsubset_{n})^{y}=\set{x\in X}{x\sqsubset_{n}y }$ is closed nowhere dense.
\end{enumerate}
If $|\Omega|=1$, we just say that $\Rbf$ is a \emph{Polish relational system (Prs)}.
\end{definition}

\begin{remark}\label{Prsremark}
By~\autoref{def:Prs}~\ref{def:Prsc}, $\Cbf_{\Mwf(X)} \leqT \Rbf$. Therefore, $\bfrak(\Rbf)\leq \non(\Mwf)$ and $\cov(\Mwf)\leq\dfrak(\Rbf)$.
\end{remark}

\begin{definition}\label{def:good}
A poset $\Por$ is \textit{$\theta$-$\Rbf$-good} if, for any $\Por$-name $\dot{h}$ for a member of $Y$, there is a non-empty set $H\subseteq Y$ (in the ground model) of size ${<}\theta$ such that, for any $x\in X$, if $x$~is $\Rbf$-unbounded over  $H$ then $\Vdash x\not\sqsubset \dot{h}$.

We say that $\Por$ is \textit{$\Rbf$-good} if it is $\aleph_1$-$\Rbf$-good. Notice that $\theta<\theta_0$
implies that any $\theta$-$\Rbf$-good poset is $\theta_0$-$\Rbf$-good. Also, if $\Por \lessdot\Qor$ and $\Qor$~is $\theta$-$\Rbf$-good, then $\Por$ is $\theta$-$\Rbf$-good.
\end{definition}

The preceding definition is a standard property associated with preserving $\bfrak(\Rbf)$ small and $\dfrak(\Rbf)$ large after forcing extensions. 

Below, we present some examples of good posets that will be used in the proof of~\autoref{FSitPb}:

\begin{example}\label{exm:good}
We indicate the type of posets that are good for the Prs of Cicho\'n's diagram.
  \begin{enumerate}[label=(\arabic*)]
     \item\label{exm:gooda}  The relational system $\baire:=\la\omega^\omega,\omega^\omega,\leq^*\ra$ is Polish. All of the forcing presented in~\autoref{exmsoft} are examples of $\baire$-good sets. More generally, $\sigma$-$\Fr$-linked posets are $\baire$-good.  (see~\cite{mejvert,BCM}). In particular, $\sigma$-soft-linked poset is $\omega^\omega$-good by~\autoref{SoftFr}.
     
     \item\label{exm:goodb}  Define $\Omega_n:=\set{a\in [2^{<\omega}]^{<\aleph_0}}{\Lb(\bigcup_{s\in a}[s])\leq 2^{-n}}$ (endowed with the discrete topology) and put $\Omega:=\prod_{n<\omega}\Omega_n$ with the product topology, which is a perfect Polish space. For every $x\in \Omega$ denote 
     \[N_{x}:=\bigcap_{n<\omega}\bigcup_{m\geq n}\bigcup_{s\in x(m)}[s],\] which is clearly a Borel null set in $2^{\omega}$.
       
    Define the Prs $\Cn:=\la \Omega, 2^\omega, \sqsubset\ra$ where $x\sqsubset z$ iff $z\notin N_{x}$. Recall that any null set in $2^\omega$ is a subset of $N_{x}$ for some $x\in \Omega$, so $\Cn$ and $\Cbf_\Nwf^\perp$ are Tukey-Galois equivalent. Hence, $\bfrak(\Cn)=\cov(\Nwf)$ and $\dfrak(\Cn)=\non(\Nwf)$.
    
 Any $\mu$-centered poset is $\mu^+$-$\Cn$-good (\cite{Br}). In particular, $\sigma$-centered posets are $\Cn$-good.

 \item Let $\Mbf := \la 2^\omega,\Ior\times 2^\omega, \sqsubm \ra$ where
     \[x \sqsubm (I,y) \text{ iff }\forall^\infty n\colon x\frestr I_n \neq y\frestr I_n.\]
     This is a Polish relational system and $\Mbf\eqT \Cbf_\Mwf$ (see~ \cite{blass}).

     Note that, whenever $M$ is a transitive model of $\thzfc$, $c\in 2^\omega$ is a Cohen real over $M$ iff $c$ is $\Mbf$-unbounded over $M$.

     \item\label{exm:goodc} Kamo and Osuga~{\cite{KO}} define a gPrs with parameters $\varrho,\rho\in\omega^\omega$, which we denote by $\Lc^*(\varrho,\rho)$. We review its properties for the paper's purpose. Assume that $\varrho>0$ and $\rho\geq^* 1$.

\begin{enumerate}[label = \rm (\alph*)]
    \item\label{KOa} $\Lc^*(\varrho,\rho)\leqT \Lc(\varrho,\rho)$~\cite[Lem.~4.21]{CM}.
    \item\label{KOd} Any $\theta$-centered poset is $\theta^+$-$\Lc^*(\varrho,\rho)$-good~\cite[Lem.~4.24]{CM}.
\end{enumerate}
  \end{enumerate}
\end{example}

The Prs defined below is inspired by~\cite[Sec.~4.3]{BrendlevasionI}.

\begin{definition}
A pair $\pi=(\seq{A_k}{k\in\omega}, \seq{\pi_k}{k\in\omega})$ is called \textit{generalized predictor} iff for all $k\in\omega$,
\begin{itemize}
    \item $A_k\subseteq[k,\omega)$ is finite,\smallskip

    \item $\pi_k$ is a function with $\dom\pi_k=\set{\sigma\in\omega^{<\omega}}{\mathrm{lh}(\sigma)\in A_k}$, and \smallskip

    \item $\ran\pi_k\subseteq[\omega]^{<\omega}$.
\end{itemize}

Let $\Pi$ be the collection of generalized predictors. For $\pi\in\Pi$ and $f\in\omega^\omega$, we say   \[\pi\textit{\ predicts\ }f~ (\textrm{denoted by\ } f\in^{\textbf{pr}}\pi)\textrm{\ iff\ } \forall^\infty k\in\omega\,\exists\ell\in A_k(f(\ell)\in \pi_k(f{\upharpoonright}\ell));\]
otherwise, $f$~\emph{evades}~$\pi$. Define the relational system $\mathbf{E}=\la\omega^\omega,\Pi,\in^{\textbf{pr}}\ra$. The original definition of predicting is a~particular instance of this notion (in the case $A_k=\{\ell_k\}$, $\ell_k<\ell_{k+1}$ and $|\pi_k(\sigma)|=1$ for $\sigma\in\omega^{\ell_k}$).
\end{definition}

\subsection{Some examples of \texorpdfstring{$\sigma$}{}-soft-linked posets}\label{exmsoft}
\ 

Now we look at instances of $\sigma$-soft-linked posets. To illustrate the next example, it is necessary to present the definition of ~\emph{$(b,h)$-ED (eventually different real) forcing}, denoted by $\Eor^h_b$, which was introduced by Kamo and Osuga in~\cite{KO} and which has been extensively investigated in \cite{CM, BCM, BCM2}.  This forcing is used to increase $\balc_{b,h}$ (the \emph{anti-localization cardinal}, see e.g.~\cite{CMlocalc} for the definition and more).

\begin{definition}\label{EDforcing}
Fix $b\colon\omega\to(\omega+1)\menos\{0\}$ and $h\in\omega^\omega$ such that $\lim_{i\to+\infty}\frac{h(i)}{b(i)}=0$ (when $b(i)=\omega$, interpret $\frac{h(i)}{b(i)}$ as $0$).
      Define the \emph{$(b,h)$-ED forcing} $\Eor^h_b$ as the poset whose conditions are of the form $p=(s,\varphi)$ such that, for some $m:=m_{p}<\omega$, 
         \begin{enumerate}
            \item $s\in\Seq_{<\omega}(b)=\bigcup_{n<\omega}\prod_{k<n}b(k)$, $\varphi\in\Swf(b,m\cdot h)$, and\smallskip
            \item $m\cdot h(i)<b(i)$ for every $i\geq|s|$.
         \end{enumerate}
      Ordered by $(t,\psi)\leq(s,\varphi)$ iff $s\subseteq t$, $\forall i<\omega(\varphi(i)\subseteq\psi(i))$ and $t(i)\notin\varphi(i)$ for all $i\in|t|\menos|s|$. For $s\in\Seq_{<\omega}(b)$ and $m<\omega$ put    
      \[E^h_b(s,m):=\set{(t,\varphi)\in\Eor^h_b}{t=s\text{\ and }m_{(t,\varphi)}\leq m}.\]
      Denote $\Eor_b:=\Eor^1_b$, $\Eor:=\Eor_\omega$, $E_b(s,m):=E^1_b(s,m)$, and $E(s,m):=E_\omega(s,m)$.

     It is not hard to see that $\Eor_b^h$ is $\sigma$-linked. Even more, whenever $b\geq^*\omega$, $\Eor_b^h$ is $\sigma$-centered.

     When $h\geq^*1$, $\Eor_b^h$ adds an eventually different real\footnote{More generally, $\Eor_b^h$ adds a real $e\in\prod b$ such that $\forall^\infty i<\omega\,(e(i)\notin\varphi(i))$ for any $\varphi\in\Scal(b,h)$ in the ground model.} in $\prod b$.  
\end{definition}

\begin{example}
 Let $b,h$ be as in~\autoref{EDforcing}. Then $\Eor^h_b$ is $\sigma$-soft-linked. In particular, $\Eor$ is $\sigma$-soft-linked.
\end{example}
In point of fact, it is clear that $\bigcup_{(s,m)}E^h_b(s,m)$ is dense in $\Eor^h_b$ and by \cite[Lem.~3.8]{BCM}, it is known that $E^h_b(s,m)$ is Fr-linked, so by~\autoref{thm:a1} it is leaf-linked. Therefore,~\ref{maindefba} and \ref{maindefbb} of \autoref{maindef}~\ref{maindefb} are met. 

Let $m, m'\in\omega$, and $s, s'\in\Seq_{<\omega}(b)$. Wlog assume that $s'$ is longer than $s$. Next, let $s^*:=s'$ and $m^*:=m+m'$. It can be proved that for any $(t,\psi)\in E^h_b(s,m)$ and $(t',\psi')\in E^h_b(s',m')$ compatible then there is $(t^*,\psi^*)\in E^h_b(s^*,m^*)$ extending $(t,\psi)$ and $(t',\psi')$. Hence, ~\ref{maindefbc} of~\autoref{maindef}~\ref{maindefb} holds.

We now introduce a forcing to increase $\blc_{b,h}$, which typically adds \emph{a slalom}. This forcing was introduced by Brendle and Mej\'ia~\cite{BrM}.

\begin{definition}
For $b,h\in\omega^\omega$ such that
$\forall i<\omega\,(b(i)>0$ and $h(i)\le b(i))$,
$\lim_{i\to\infty}h(i)=\infty$, and
$\lim_{i\to\infty}h(i)/b(i)=0$  define the localization forcing $\LOCor_{b,h}$ by \[\LOCor_{b,h}:=\set{(p,n)}{p\in\Swf(b,h),\, n<\omega\textrm{\ and\ }\exists m<\omega\,\forall i<\omega\,(|p(i)|\leq m)},\]
ordered by $(p',n')\leq(p,n)$ iff $n\leq n'$, $p'{\upharpoonright}n=p$, and $\forall i<\omega(p(i)\subseteq p'(i))$. 
\end{definition}

\begin{example}
 $\LOCor_{b,h}$ is $\sigma$-soft-linked.
\end{example}
Indeed, for $s\in\Scal_{<\omega}(b,h)=\bigcup_{k\in\omega}\prod_{n<k}[b(n)]^{\le h(n)}$, and $m<\omega$, define
\[
 L_{b,h}(s,m):=\set{(p,n)\in\LOCor_{b, h}}{s\subseteq p,\, n=|s|, \textrm{\ and\ }\forall i<\omega\,(|p(i)|\leq m)}.  
\] 
Notice that  $\bigcup_{(s,m)\in\Scal_{<\omega}(b,h)\times\omega}\LOCor_{b,h}(s,m)$ is dense in $\LOCor_{b,h}$ and  also note that $L_{b,h}(s,m)$ is Fr-linked by~\cite[Rem.~3.31]{mejvert} (see also~\cite[Lem.~4.10]{Car4E}). Since $L_{b,h}(s,m)$ is Fr-linked, by~\autoref{thm:a1}, it is leaf-linked. Hence, ~\ref{maindefba} and \ref{maindefbb} of \autoref{maindef}~\ref{maindefb} hold.

Let $m, m'\in\omega$, and $s, s'\in\Scal_{<\omega}(b,h)$. Wlog assume that $s'$ is longer than $s$. Put $s^*:=s'$ and $m^*:=m+m'$. If $(p,n)\in L_{b,h}(s,m)$ and $(p',n')\in L_{b,h}(s',m')$ with $(p,\varphi)\,\|\,(p',\varphi')$, then there is a $(p^*,n^*)\in L_{b,h}(s^*,m^*)$ such that $(p^*,n^*)\leq (p,n), (p',n')$. This shows ~\ref{maindefbc} of~\autoref{maindef}~\ref{maindefb}.

We now proceed to present the meager forcing (denoted by $\Mor$) proposed by Judah and Shelah~\cite{JS}. They demonstrated by them that $\Mor$ does not add dominating reals and increases $\non(\Mwf)$.
  
\begin{definition}
For $\eta\subseteq2^{<\omega}$ denote $n_\eta=\sup\set{|s|}{s\in \eta}$. Let $T$ be the set of the all $\eta\subseteq2^{<\omega}$ such that 
\begin{enumerate}[label=\rm(\alph*)]
    \item $n_\eta\in\omega$,\smallskip

    \item if $v\in\eta$ and $k<|v|$, then $v{\upharpoonright}k\in\eta$; and\smallskip

    \item if $v\in\eta$ and $|v|<n_\eta$, then $v{}^{\smallfrown}0\in\eta$ or $v{}^{\smallfrown}1\in\eta$.
\end{enumerate}

For $\eta\in T$ let $[\eta]=\set{x\in2^\omega}{x{\upharpoonright}n_\eta\in\eta}\subseteq2^\omega$ and $\tilde \eta=\set{u\in\eta}{|u|=n_\eta}$. Define the \emph{meager forcing $\Mor$} as the set \[\Mor=\set{(\eta, H)}{\eta\in T\text{\ and\ }H\subseteq[\eta] \text{\ is  finite} }\] ordered by $(\nu, K)\leq(\eta, H)$ iff $\tilde \eta=\nu\cap2^{n_\nu}$ and $H\subseteq K$.  It is not hard to see that $\Mor$ is $\sigma$-centered. Moreover, $\Mor$ adds a Cohen real.

Let 
\[\Mor^*=\set{(\eta, H)\in\Mor}{\forall v\in\tilde\eta\,(|H\cap[v]|=1)}.\]
Notice that $\Mor^*$ is dense in $\Mor$. 
\end{definition}
We prove that $\Mor^*$ is $\sigma$-soft-linked, so $\Mor$ is $\sigma$-soft-linked as well. First note that~\ref{maindefba} of \autoref{maindef} holds and \ref{maindefbb} of \autoref{maindef} holds by~\autoref{uf:M}.

Our focus in what follows is on proving~\autoref{uf:M}. The following linkedness is stronger than Fr-linkedness. 

\begin{definition}[{\cite{GMS}}]
Let $\Por$ be a poset, and let $D\subseteq \pts(\omega)$ be a non-principal ultrafilter. Say that $Q\subseteq\Por$ is \emph{$D$-$\lim$-linked} if there is a function $\lim^{D}\colon Q^\omega\to \Por$ and a $\Por$-name $\dot D'$ of an ultrafilter extending $D$ such that, for any $\bar q = \la q_i:\, i<\omega\ra \in Q^\omega$,
\[{\lim}^{D}\, \bar q \Vdash \dot{W}(\bar{q})\in \dot D'.\]     
A set $Q\subseteq \Por$ has \emph{uf-$\lim$-linked} if it is $D$-\textrm{lim}-linked for any ultrafilter $D$. 

For an infinite cardinal $\theta$, the poset $\Por$ is~\emph{uniformly $\mu$-$D$-$\lim$-linked} if $\Por = \bigcup_{\alpha<\theta}Q_\alpha$ where each $Q_\alpha$ is $D$-$\lim$-linked and the $\Por$-name $\dot D'$ above mentioned only depends on $D$ and not on $Q_\alpha$, although we have different limits for each $Q_\alpha$. When these $Q_\alpha$ are \emph{uf-$\lim$-linked}, we say that $\Por$ is \emph{uniformly $\mu$-uf-$\lim$-linked}.
\end{definition}

\begin{remark}\label{rem:uf}
For instance, random forcing is $\sigma$-uf-linked (see~\cite[Lem.~3.29]{mejvert}), but it may not be $\sigma$-uf-$\lim$-linked (cf.~\cite[Rem.~3.10]{BCM}). It is clear that any uf-$\lim$-linked set $Q\subseteq \Por$ is uf-linked, which implies $\Fr$-linked.  
\end{remark}

Our objective is to demonstrate that $\Mor^*$ is uniformly $\sigma$-uf-$\lim$-linked, witnessed by
\[M_\eta:=M^*_\eta=\set{(\eta',H)\in\Mor^*}{\eta'=\eta}\]
for $\eta\in T$.  Let $D$ be an ultrafilter on $\omega$, and $\bar{p}=\seq{p_n}{n\in\omega}$ be a sequence in $M^*_\eta$ with $p_n = (\eta,H_n)$. Let $m_n:=|\eta\cap2^{n_\eta}|$. Considering the lexicographic order $\lhd$ of $\cantor$, let $\set{x_{n,k}}{k<m_n}$ be a $\lhd$-increasing enumeration of $H_n$. Next, find $a_0\in D$ and $m_0<\omega$ such that $H_{n}=\set{x_{n,k}}{k\in m_0}$ for all $n\in a_0$. For each $k<m_0$, define $x_k=\lim_n^D x_{n,k}$ in $\cantor$ by
\[x_k(i)=s\textrm{\ iff\ }\set{n\in a_0}{x_{n,k}(i)=s}\in D,\]
which matches the topological $D$-limit. 
Then, the $D$-limit of $H_n$ can be defined as $H:=\set{x_k}{k<m_0}$ and $\lim^D \bar p := (\eta,H)$. It is clear that this limit is in $M^*_\eta$.

\begin{theorem}\label{uf:M}
The poset $\Mor^*$ is uniformly $\sigma$-uf-$\lim$-linked:
For any ultrafilter $D$ on $\omega$, there is a $\Mor^*$-name of an ultrafilter $\dot D'$ on $\omega$ extending $D$ such that,
for any $\eta\in T$, and $\bar p\in M_{\eta}^{\omega}$, $\lim^D \bar p \Vdash \dot W(\bar p)\in \dot D'$. 
\end{theorem}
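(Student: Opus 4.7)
The plan is to take $\dot D'$ to be a $\Mor^*$-name for any ultrafilter extending both $D$ and the family
\[
\set{\dot W(\bar p)}{\eta\in T,\ \bar p\in M_\eta^\omega\text{ in }V,\ \lim\nolimits^D\bar p\in\dot G},
\]
where $\dot W(\bar p):=\set{n<\omega}{p_n\in\dot G}$. Because this family is defined only from $D$ and $\dot G$, so is $\dot D'$, which gives the uniformity required by the definition. Once the filter generated in the extension by $D$ together with these $\dot W(\bar p)$'s is known to be proper, one extends it to a full ultrafilter by the standard enumeration-and-choice routine (enumerate the $\Mor^*$-names for subsets of $\omega$ and decide each maximally), and then $\lim^D\bar p\Vdash\dot W(\bar p)\in\dot D'$ holds by construction. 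So the real task is to verify properness.

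The key combinatorial lemma is the following: for every $b\in D$, every $k<\omega$, all $\eta_1,\dots,\eta_k\in T$, all $\bar p^{(i)}=\seq{p_n^{(i)}}{n<\omega}\in M_{\eta_i}^\omega$, and every $q\in\Mor^*$ with $q\leq\lim^D\bar p^{(i)}$ for every $i\le k$, the condition $q$ forces $b\cap\bigcap_{i\le k}\dot W(\bar p^{(i)})$ to be infinite. Write $\lim^D\bar p^{(i)}=(\eta_i,H^{(i)})$ with $H^{(i)}=\set{x^{(i)}_{k'}}{k'<m_i}$ and $x^{(i)}_{k'}=\lim_n^D x^{(i)}_{n,k'}$ on some $a_0^{(i)}\in D$, and write $q=(\eta',H')$. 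Since convergence in $2^\omega$ is pointwise and $x^{(i)}_{k'}\in H'\subseteq[\eta']$, each set
\[
C_{i,k'}:=\set{n\in a_0^{(i)}}{x^{(i)}_{n,k'}\upharpoonright n_{\eta'}=x^{(i)}_{k'}\upharpoonright n_{\eta'}}
\]
belongs to $D$; hence $C:=b\cap\bigcap_{i,k'}C_{i,k'}\in D$, so $C$ is infinite. For every $n\in C$ one has $H_n^{(i)}:=\set{x^{(i)}_{n,k'}}{k'<m_i}\subseteq[\eta']$, so $r:=(\eta',H'\cup\bigcup_{i\le k}H_n^{(i)})$ is a condition in $\Mor$ extending $q$ and each $p_n^{(i)}$; refining $\eta'$ by splitting its top level enough times to separate the finitely many points of $H'\cup\bigcup_i H_n^{(i)}$ produces an extension of $r$ in $\Mor^*$ that forces $n\in b\cap\bigcap_i\dot W(\bar p^{(i)})$. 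This proves the lemma.

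Granted the lemma, the filter generated by $D$ and the $\dot W(\bar p)$'s is forced to be proper: any purported witness that it contains $\emptyset$ would be a condition below finitely many $\lim^D\bar p^{(i)}$'s forcing $b\cap\bigcap_i\dot W(\bar p^{(i)})=\emptyset$ for some $b\in D$, contradicting the lemma. Extending to a name $\dot D'$ for an ultrafilter is then routine, and the final clause $\lim^D\bar p\Vdash\dot W(\bar p)\in\dot D'$ is built into the construction.

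The principal obstacle is the last step in the lemma, namely refining the ad hoc $\Mor$-condition $r=(\eta',H'\cup\bigcup_i H_n^{(i)})$ to one that lies in $\Mor^*$, where every top-level node of the tree meets $[v]$ in exactly one element of the finite-set part. This is achieved by subdividing each $v\in\tilde\eta'$ into sufficiently many successors and extending the finite part so that each new top-level node carries a unique distinguished point; the bookkeeping is forcing-specific but elementary, and it is what ties the argument to the structure of $\Mor^*$ rather than to ccc posets in general.
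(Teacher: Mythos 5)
Your proof is correct and follows essentially the same route as the paper: you prove the same finite-intersection claim (that any $q\in\Mor^*$ below finitely many $D$-limits $\lim^D\bar p^{(i)}$ fails to force $a\cap\bigcap_i\dot W(\bar p^{(i)})$ to be empty, for $a\in D$), via the same mechanism of pointwise $D$-convergence of the finite point-sets and the density of $\Mor^*$ in $\Mor$ to regularize the combined condition. Stating the conclusion as "forced infinite" rather than "forced nonempty" is a cosmetic strengthening obtained by the same argument below arbitrary extensions of $q$, and the remainder (generating and extending to an ultrafilter name) is the same routine step the paper leaves implicit.
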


It suffices to show the following to establish the previous theorem:

\begin{clm}
      Assume $M<\omega$, $\set{\eta_k}{k<M}\subseteq T$, $\set{\bar{p}^k}{k<M}$ is such that each $\bar{p}^k=\seq{ p_{k,n}}{n<\omega}$ is a sequence in $M_{\eta_k}$, $q_k$ is the $D$-limit of $\bar{p}^k$ for each $k<M$, and $q\in\Mor^*$ is stronger than every $q_k$. If $a\in D$ then $q$ forces that $a\cap\bigcap_{k<M}\dot{W}(\bar{p}^k)\neq\emptyset$.
\end{clm}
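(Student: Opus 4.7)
The plan is to reduce the claim to a genericity argument: it suffices to show that for every $q^\star\leq q$ there exist $n\in a$ and $q^{\star\star}\leq q^\star$ with $q^{\star\star}\leq p_{k,n}$ for all $k<M$, since then $q^{\star\star}\Vdash n\in a\cap\bigcap_{k<M}\dot W(\bar p^k)$. So I fix an arbitrary $q^\star=(\nu,K)\leq q$ and hunt for such an $n$ and such a common refinement.

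Write $q_k=(\eta_k,H_k)$ with $H_k=\{x_{k,j}:j<m_k\}$ and, on a $D$-large set of indices, $p_{k,n}=(\eta_k,H_{k,n})$ with $H_{k,n}=\{x_{k,n,j}:j<m_k\}$, where $x_{k,j}$ is the topological $D$-limit of $x_{k,n,j}$. Because $q^\star\leq q\leq q_k$, one has $H_k\subseteq K\subseteq[\nu]$, so each $x_{k,j}\in[\nu]$. Since $[\nu]$ is a clopen neighborhood of $x_{k,j}$ in $\cantor$ and $D$-limits commute with the product topology, for every $k<M$ and $j<m_k$ the set $B_{k,j}:=\{n:x_{k,n,j}\in[\nu]\}$ lies in $D$. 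Intersecting $a$ with all these $B_{k,j}$ and with the stabilization set $a_0$ yields a $D$-large, hence nonempty, set of ``good'' indices; pick any $n$ from it.

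Now form the finite set $L:=K\cup\bigcup_{k<M}H_{k,n}\subseteq[\nu]$ and choose a level $N\geq n_\nu$ large enough to separate every pair of distinct points of $L$. Put $\mu:=\{x{\upharpoonright}\ell : x\in L,\ \ell\leq N\}$. Then $\mu\in T$ (the conditions on $T$ are easy to check, since every non-top node is a strict initial segment of some $x\in L$), $\mu\cap 2^{n_\nu}=\tilde\nu$ because $L\subseteq[\nu]$, and by the choice of $N$ each top vertex $v\in\tilde\mu$ carries exactly one point of $L$, so $q^{\star\star}:=(\mu,L)\in\Mor^*$. The verifications $q^{\star\star}\leq q^\star$ and $q^{\star\star}\leq p_{k,n}$ for every $k<M$ follow from $K\subseteq L$, $H_{k,n}\subseteq L$, and the fact that $\nu$ extends $\eta_k$ (inherited from $q^\star\leq q\leq q_k$), which propagates to $\mu$ at the appropriate levels.

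The principal technical obstacle is ensuring that the common refinement actually lies in $\Mor^*$ and not merely in $\Mor$: the defining requirement that each top vertex of the tree bears exactly one forbidden point is what forces us to push $N$ high enough to separate all elements of $L$. Once that separation is in place, everything else is routine, and the heart of the argument is just the transfer of the ``points converging to points'' description of $D$-limits into membership in the clopen basic set $[\nu]$.
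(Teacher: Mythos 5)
Your argument is correct and follows the same skeleton as the paper's proof: fix an arbitrary $(\nu,K)\leq q$, use the topological $D$-limit to find a $D$-large set of indices $n$ for which every $H_{k,n}$ lands inside $[\nu]$, intersect with $a$, and merge the forbidden sets. The only real divergence is in the last step: the paper forms $r=(\nu,K\cup\bigcup_{k<M}H_{k,n})$, observes that $r\in\Mor$ and refines it using the density of $\Mor^*$ in $\Mor$, whereas you construct the $\Mor^*$-condition directly by growing the tree to a level $N$ that separates the finitely many forbidden points so that each top node carries exactly one of them. The two finishes are interchangeable — your explicit construction is essentially the proof that $\Mor^*$ is dense — so nothing is gained or lost; your version is marginally more self-contained, the paper's marginally shorter. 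One small presentational note: the paper phrases the $D$-large set via the restriction equality $x^{k,n}_j\restriction n_\nu=x^k_j\restriction n_\nu$, while you phrase it as $x_{k,n,j}\in[\nu]$; these are equivalent since $x^k_j\in[\nu]$, and both yield $H_{k,n}\subseteq[\nu]$.
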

\begin{proof}
 Write $p_{k,n}=(s_k,t_k,H_{k,n})$, $q_k=(s_k,t_k,H_{k})$ where each $H_k = \set{x^k_j}{j<m_{0}^k}$ is the $D$-limit of $H_{k,n} = \set{x^{k,n}_j}{j<m_{0}^k}$ (increasing $\lhd$-enumeration) with $m_{0}^k\leq m_k$, for all $n\in a$ (wlog). Assume that $(\eta, H)\leq q\leq q_k$ in $\Mor$ for all $k<M$. Then $\tilde \eta_k=\eta\cap2^{n_\eta}$ and $H_k\subseteq H$, therefore \[a_k=\set{n\in\omega}{\forall j<m_{0}^{k}\,( x^{k,n}_j{\restriction}n_{\eta}=x_j^k{\restriction}n_{\eta})}\]
is in~$D$. Hence, $a\cap\bigcap_{k<M}b_k\neq\emptyset$, so choose $n\in a\cap\bigcap_{k<M}b_k$, and put $r:=(\eta,H')$ where $H':=H\cup \bigcup_{k<M}H_{k,n}$.  Then $H_{k,n}\subseteq[\eta]$ because for every $j<m_0^k$,
$x^{k,n}_j{\restriction}n_{\eta}=x_j^k{\restriction}n_{\eta}\in\tilde\eta$, $H'\subseteq[\eta]$, so $r$ is a condition in $\Mor$ and $r$ is stronger than $q$ and $p_{n,k}$ for any $k<M$. Since $\Mor^*$ is dense in $\Mor$, we can find $r'\leq r$, which implies that $r'$ forces $n\in a\cap\bigcap_{k<M}\dot{W}(\bar{p}^k)$.     
\end{proof}

We have seen that $M_\eta$ are uf-lim-linked, so by~\autoref{rem:uf} $M_\eta$ are Fr-linked. Hence, by~\autoref{thm:a1}, $M_\eta$ are leaf-linked, so the condition ~\ref{maindefba} of~\autoref{maindef}~\ref{maindefb} holds. It remains to prove~\ref{maindefbc} of~\autoref{maindef}~\ref{maindefb}. To see this,
let $(\eta,H)\in M_{\eta}$ and $(\eta',H')\in M_{\eta'}$. Wlog assumes that $\eta'$ is longer than $\eta$. So put $\eta^*=\eta'$. It is not difficult to see that, if $(\eta,H)\in M_{\eta}$ and $(\eta',H')\in M_{\eta'}$ with $(\eta,H)\,\|\,(\eta',H')$, then there is a $(\eta^*,H^*)\in M_{\eta^*}$ such that $(\eta^*,H^*)\leq (\eta,H), (\eta',H')$. 

To state the next example, we need the definition of the Dirichlet–Minkowski forcing, due to 
Bukovsk\'y, Rec\l aw, and Repick\'y~\cite[Sec.~7]{BRR}. They used this forcing notion to increase the uniformity of the collection of the Dirichlet sets ($D$-sets) of $[0,1]$. The forcing Dirichlet–Minkowski forcing is defined by \[\DMor:=\set{(s,F)}{s\in\omega^{<\omega}\textrm{\ and\ }F\in[[0,1]]^{<\omega}}.\] 
We order $\DMor$ by $(s,F)\leq(t,H)$ iff $t\subseteq t$, $H\subseteq F$, and $\forall i\in\dom(s\smallsetminus t)\,\forall x\in H\,(\parallel s(i)\cdot x\parallel<\frac{1}{i+1})$. 
The poset $\DMor$ is $\sigma$-centered, since for $s\in\omega^{<\omega}$ the set 
\[M_{s}=\set{(t,A)\in\DMor}{t=s}\]
is centered and $\bigcup_{s\in\omega^{<\omega}}M_{s}=\DMor$.

We claim that $\DMor$ is $\sigma$-soft-linked. In order to see this, we shall verify that 
the sets \[M_{s,m}=\set{(t,F)\in\DMor}{t=s\textrm{\ and\ }|F|\leq m},\]
$s\in\omega^{<\omega}$ and $m\in\omega$, witness that $\DMor$ is $\sigma$-soft-linked. We leave to the reader to check the~\ref{maindefbb} and~\ref{maindefbc} of~\autoref{maindef}~\ref{maindefb}. We show that~\ref{maindefba} of~\autoref{maindef}~\ref{maindefb}. Because of~\autoref{thm:a1}, it suffices to prove that the sets $M_{s,m}$ are Fr-linked.

As in the previous example, we proceed to show a version of Fr-linked.  We shall demonstrate that the sets $M_{s,m}$ (as defined above) witness that $\DMor$ is uniformly $\sigma$-uf-$\lim$-linked. For an ultrafilter $D$ on $\omega$,  and $\bar{p}=\seq{p_n}{n\in\omega}\in M_{s,m}$, we show how to define $\lim^D\bar p$. Let $p_n=(t,F_n) \in M_{s,m}$. Let $\set{x_{n,k}}{k<m_n}$ be a $\lhd$-increasing enumeration of $F_n$ where $m_n \leq m$. Next find an unique  $m_*\leq m$ such that $A:=\set{n\in\omega}{m_n=m_*}\in D$. For each $k<m_*$, define $x_k:=\lim_n^D x_{n,k}$ in $[0,1]$ where $x_k$ is the unique member of $[0,1]$ such that $\set{n\in A}{x_{n,k}= x_k} \in D$ (this coincides with the topological $D$-limit). Therefore, we can think of $F:=\set{x_k}{k<m_*}$ as the $D$-limit of $\seq{F_n}{n<\omega}$, so we define $\lim^D \bar p:=(t,F)$. Note that $\lim^D \bar p \in M_{s,m}$. 

\begin{theorem}\label{ufQf}
The poset $\DMor$ is uniformly $\sigma$-uf-$\lim$-linked: 
If $D$ is an ultrafilter on $\omega$, 
then there is a  $\DMor$-name of an ultrafilter $\dot D'$ on $\omega$ extending $D$ such that, for any $s\in\omega^{<\omega}$, $m\in\omega$ and $\bar p\in M_{s,m}^\omega$, $\lim^D \bar p \Vdash W(\bar p)\in \dot D'$.
\end{theorem}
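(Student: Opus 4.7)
The plan is to mirror the argument used for \autoref{uf:M} (and its accompanying claim): define $\dot D'$ as a $\DMor$-name for an ultrafilter extending $D$ together with every set $\dot W(\bar p)$ arising from a sequence $\bar p\in M^\omega_{s,m}$, and reduce the theorem to a single combinatorial claim, namely that finite intersections of such $\dot W(\bar p^k)$'s with members of $D$ are forced non-empty by any common extension of the limits $\lim^D\bar p^k$.

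More precisely, I would formulate and prove the following. Assume $M<\omega$, and for each $k<M$ we are given $(s_k,m_k)\in\omega^{<\omega}\times\omega$, a sequence $\bar p^k=\seq{p_{k,n}}{n<\omega}\in M_{s_k,m_k}^\omega$ with $p_{k,n}=(s_k,F_{k,n})$, $F_{k,n}=\set{x_j^{k,n}}{j<m_*^k}$ (in $\lhd$-increasing order, with $m_*^k$ fixed on a $D$-large set $A_k$), and $q_k=\lim^D\bar p^k=(s_k,F_k)$ where $F_k=\set{x_j^k}{j<m_*^k}$ with $x_j^k=\lim^D_n x_j^{k,n}$ in $[0,1]$. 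Suppose $q=(s^*,F)\in\DMor$ satisfies $q\le q_k$ for every $k<M$, and let $a\in D$. Then $q$ forces $a\cap\bigcap_{k<M}\dot W(\bar p^k)\neq\emptyset$. Once this claim is established, standard ultrafilter-name arguments (exactly as in the $\Mor^*$ case) deliver $\dot D'$.

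To prove the claim, I use that $\|s^*(i)\cdot y\|$ is a continuous function of $y\in[0,1]$, so for each $k<M$, each $i\in\dom(s^*)\setminus\dom(s_k)$ and each $j<m_*^k$, the set
\[
U_{k,i,j}:=\left\{y\in[0,1]\;\middle|\;\|s^*(i)\cdot y\|<\tfrac{1}{i+1}\right\}
\]
is open; the relation $q\le q_k$ says $x_j^k\in U_{k,i,j}$. Since $x_j^k$ is the topological $D$-limit of $\seq{x_j^{k,n}}{n<\omega}$, the set $b_{k,i,j}:=\set{n<\omega}{x_j^{k,n}\in U_{k,i,j}}$ lies in $D$. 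As the index set ranges over finitely many $(k,i,j)$, the intersection
\[
b:=a\cap\bigcap_{k<M}A_k\cap\bigcap_{k<M}\bigcap_{i\in\dom(s^*)\setminus\dom(s_k)}\bigcap_{j<m_*^k}b_{k,i,j}
\]
is in $D$, hence non-empty. Pick $n\in b$ and put $r:=(s^*,\,F\cup\bigcup_{k<M}F_{k,n})$.

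The final step is to verify that $r\in\DMor$ and $r\le q$, $r\le p_{k,n}$ for every $k<M$. The inequality $r\le q$ is immediate because the trunk is unchanged. For $r\le p_{k,n}$ one must check that for every $i\in\dom(s^*)\setminus\dom(s_k)$ and every $x\in F_{k,n}$, $\|s^*(i)\cdot x\|<\frac{1}{i+1}$; but this is precisely the content of $n\in b_{k,i,j}$ for all $j<m_*^k$. Hence $r\Vdash n\in \dot W(\bar p^k)$ for each $k<M$ and $n\in a$, proving the claim. The only real obstacle is the interplay between the $D$-limit and the strict Dirichlet inequality; the key observation that neutralises it is that $y\mapsto\|s^*(i)\cdot y\|$ is continuous, so strict Dirichlet constraints define open conditions and are therefore $D$-stably inherited from the $D$-limit back to a $D$-positive set of indices.
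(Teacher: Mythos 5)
Your proof is correct and follows essentially the same strategy as the paper's: reduce to a claim that a common extension $q$ of the $D$-limits forces $a\cap\bigcap_{k<M}\dot W(\bar p^k)\neq\emptyset$, observe that each Dirichlet constraint $\|s^*(i)\cdot y\|<\tfrac1{i+1}$ is an open condition on $y\in[0,1]$ so that the topological $D$-limit property yields $D$-large index sets, intersect finitely many of these with $a$, pick $n$, and build $r=(s^*,F\cup\bigcup_{k<M}F_{k,n})$. The only (cosmetic) deviation is that you work coordinate-by-coordinate with open sets $U_{k,i,j}\subseteq[0,1]$ where the paper works with a single open neighbourhood $U_k\subseteq[0,1]^{m_{*,k}}$, and you are slightly more careful to intersect with the sets $A_k$ ensuring $|F_{k,n}|=m_*^k$, a point the paper leaves implicit.
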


Proof of the following is sufficient to prove the above theorem.

\begin{clm}
      Assume $M<\omega$, $\set{(s_k, m_k)}{k<M}\subseteq\omega^{<\omega}\times \omega$, $\set{\bar{p}^k}{k<M}$ is such that each $\bar{p}^k=\seq{ p_{k,n}}{n<\omega}$ is a sequence in $M_{s_k,m_k}$, $q_k$ is the $D$-limit of $\bar{p}^k$ for each $k<M$, and $q\in\DMor$ is stronger than every $q_k$. Then, for any $a\in D$, there are $n<\omega$ and $q'\leq q$ stronger than $p_{k,n}$ for all $k<M$ (i.e.\ $q'$ forces $a\cap\bigcap_{k<M}\dot{W}(\bar{p}^k)\neq\emptyset$).
\end{clm}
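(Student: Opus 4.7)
Write $q=(s,F)$ and, for each $k<M$, $p_{k,n}=(s_k,F_{k,n})$. Using the construction of the $D$-limit that precedes the claim, we may assume (on a $D$-large set $B_k\subseteq\omega$) that $|F_{k,n}|$ equals a fixed $m_*^k\le m_k$ and that the lex-increasing enumeration is $F_{k,n}=\set{x^{k,n}_j}{j<m_*^k}$, with $F_k=\set{x^k_j}{j<m_*^k}$ where $x^k_j$ is the topological $D$-limit of $\seq{x^{k,n}_j}{n\in B_k}$ in $[0,1]$. The hypothesis $q\le q_k=(s_k,F_k)$ unpacks to $s_k\subseteq s$, $F_k\subseteq F$, and
\[
\|s(i)\cdot x^k_j\|<\tfrac{1}{i+1}\quad\text{for every }i\in\dom(s)\setminus\dom(s_k)\text{ and every }j<m_*^k.
\]

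The key observation is continuity: for each $i$, the set $O_i:=\{y\in[0,1]:\|s(i)\cdot y\|<\tfrac{1}{i+1}\}$ is open, and by the preceding display $x^k_j\in O_i$ for every $i\in\dom(s)\setminus\dom(s_k)$ and every $j<m_*^k$. By the definition of topological $D$-limit, this forces
\[
U^k_{i,j}:=\set{n\in B_k}{x^{k,n}_j\in O_i}\in D.
\]
Since there are only finitely many $k<M$, finitely many $i\in\dom(s)\setminus\dom(s_k)$, and finitely many $j<m_*^k$, the set
\[
A:=a\cap\bigcap_{k<M}B_k\cap\bigcap_{k<M}\bigcap_{i,j}U^k_{i,j}
\]
is a finite intersection of elements of $D$, hence belongs to $D$ and in particular is nonempty. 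Fix any $n\in A$.

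Define $q':=\bigl(s,\ F\cup\bigcup_{k<M}F_{k,n}\bigr)$, which is a legitimate condition in $\DMor$. Trivially $q'\le q$, since the stem is unchanged and the real-set coordinate has grown. For each $k<M$, verify $q'\le p_{k,n}=(s_k,F_{k,n})$: the stem inclusion $s_k\subseteq s$ comes from $q\le q_k$; the inclusion $F_{k,n}\subseteq F\cup\bigcup_{k'<M}F_{k',n}$ is clear; and for every $i\in\dom(s)\setminus\dom(s_k)$ and every $x=x^{k,n}_j\in F_{k,n}$, the choice $n\in U^k_{i,j}$ gives exactly $\|s(i)\cdot x\|<\tfrac{1}{i+1}$. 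Thus $q'\le p_{k,n}$ for every $k<M$, so $q'\Vdash p_{k,n}\in\dot G$, i.e.\ $q'\Vdash n\in\dot W(\bar p^k)$; combined with $n\in a$ we get $q'\Vdash a\cap\bigcap_{k<M}\dot W(\bar p^k)\ne\emptyset$, as desired.

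\textbf{Main obstacle.} The only delicate step is transferring the Diophantine inequality $\|s(i)\cdot x\|<\tfrac{1}{i+1}$, which we know for $x\in F_k$ on the ``fresh'' indices $i\in\dom(s)\setminus\dom(s_k)$, to $x\in F_{k,n}$ for a single $n$ that works uniformly across all $k<M$. This is precisely what is made possible by continuity of $y\mapsto\|s(i)\cdot y\|$ plus the fact that $F_k$ is the coordinate-wise topological $D$-limit of $\seq{F_{k,n}}{n\in B_k}$, which upgrades the pointwise inequality into a $D$-large set of ``good'' $n$; everything else is routine bookkeeping.
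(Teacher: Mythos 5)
Your proof is correct and follows essentially the same route as the paper's: identify the $D$-large sets on which the Diophantine inequalities from $q\le q_k$ transfer to the approximating conditions $p_{k,n}$ via topological $D$-limits, intersect finitely many such sets with $a$ to pick $n$, and then take $q'=(s,F\cup\bigcup_{k<M}F_{k,n})$. The paper packages the continuity argument as a single open neighborhood $U_k$ in $[0,1]^{m_{*,k}}$ while you split it coordinate-by-coordinate into the sets $O_i$ and $U^k_{i,j}$, but this is only a cosmetic difference.
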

\begin{proof}
Write $p_{k,n}=(s_k,F_{k,n})$, $q_k=(s_k,F_{k})$ where each $F_k = \set{x^k_j}{j<m_{*,k}}$ is the $D$-limit of $F_{k,n} = \set{x^{k,n}_j}{j<m_{*,k}}$ (increasing $\lhd$-enumeration) with $m_{*,k}\leq m_k$. Assume that $q = (s,F)\leq q_k$ in $\DMor$ for all $k<M$. Let 
\[U_k:=\largeset{\seq{x_j}{j<m_{*,k}}}{
 \forall j<m_{*,k}\,\forall i\in\dom(s\smallsetminus s_k)\,\bigg( \parallel s(i)\cdot x_j\parallel<\frac{1}{i+1}\bigg)}\]
which is an open neighborhood of $\seq{x^k_j}{j<m_{*,k}}$ in $[0,1]^{m_{*,k}}$. 
Then 
\[b_k := \largeset{n<\omega }{  \forall j<m_{*,k}\,\forall i\in\dom(s\smallsetminus s_k)\colon  \parallel s(i)\cdot x_{j}^{k,n}\parallel<\frac{1}{i+1}}\in D.\]
Hence, $a\cap\bigcap_{k<M}b_k\neq\emptyset$, so choose $n\in a\cap\bigcap_{k<M}b_k$ and put $q'=(s,F')$ where $F':=F\cup \bigcup_{k<M}F_{k,n}$. This is a condition in $\DMor$ because $|F'|\leq |F|+\sum_{k<M}m_{*,k}$. Furthermore, $q'$ is stronger than $q$ and $p_{n,k}$ for any $k<M$.
\end{proof}

In order to establish the consistency of $\efrak<\eubd$, Brendle~\cite{BrendlevasionI} introduced the following forcing notion: 
\begin{definition}\label{Brpr}
 Given a $b\in\omega^\omega$ define the forcing $\Pror_b$ as the poset whose conditions are of the form $p=(A,\seq{ \pi_n}{n\in A}, F)$ such that 
\begin{enumerate}[label=(\roman*)]
    \item $A\in[\omega]^{<\aleph_0}$,\smallskip
    \item $\forall n\in A$, $\pi_n\colon\prod_{m<n}b(m)\to b(n)$, and \smallskip
    \item $F$ finite set of functions and $\forall f\in F\,\exists k\leq\omega\,(f\in\prod_{n<k}b(n))$, i.e., $F$ is a finite subset of $\Seq_{<\omega}(b)\cup\Seq(b)$.
\end{enumerate}
The order is defined by 
$(A',\pi',F')\le(A,\pi,F)$ iff
\begin{enumerate}[label=(\alph*)]

\item\label{Brpr:a}
$A\subseteq A'$, $\pi\subseteq\pi'$,\smallskip
\item\label{Brpr:b}
if $A'\ne A$ and $A\ne\emptyset$, then $\max A<\min(A'\smallsetminus A)$,\smallskip
\item\label{Brpr:c}
$\forall f\in F\,\exists g\in F'\,(f\subseteq g)$, and\smallskip
\item\label{Brpr:d}
$\forall f\in F\,\forall n\in (A'\smallsetminus A)\cap\dom(f)\,(\pi'_n(f{\restriction}n)=f(n))$.
\end{enumerate}

    
    
Furthermore, $\Pror_b$ is $\sigma$-centered (and thus in particular ccc). In particular, if $(A',\pi',F')\le(A,\pi,F)$,
$f_0,f_1\in F$,
$n\in\dom(f_0)\cap\dom(f_1)\smallsetminus A$,
$f_0{\restriction}n=f_1{\restriction}n$, and
$f_0(n)\ne f_1(n)$, then $n\notin A'$.
For any two conditions $p_i=(A_{p_i},\pi_{p_i},F_{p_i})$, $i\in2$,
in~$\Pror_b$,
\begin{align*}
p_0\parallel p_1\Leftrightarrow{}
&(A_{p_0}\cup A_{p_1},\pi_{p_0}\cup\pi_{p_1},F_{p_0}\cup F_{p_1})
\text{ is a~condition}\\*
&\text{and this condition  is a lower bound of $p_0$ and $p_1$ in $\Pror_b$}
\tag{$\boxtimes$}\label{Brbox}\\
\Leftrightarrow{}
&A_{p_0}=A_{p_1}\text{ and }\pi_{p_0}=\pi_{p_1}\text{ or }\\
&\exists i\in2\
A_{p_i}\subsetneq A_{p_{1-i}}\text{ and }\pi_{p_i}\subsetneq\pi_{p_{1-i}}\text{ and }
\max A_{p_i}<\min(A_{p_{1-i}}\smallsetminus A_{p_i})\\
&\phantom{\exists i\in2}\text{ and }
\forall f\in F_{p_i}\ \forall n\in(A_{p_{1-i}}\smallsetminus A_{p_i})\cap\dom(f)\
(\pi_{p_{1-i}})_n(f{\restriction}n)=f(n).
\end{align*}
\end{definition}

Let $G$ be a $\Pror_b$-generic filter over $V$. In $V[G]$, define 
\[\pi_{\gen}:=\bigcup\largeset{\seq{ \pi_n}{n\in A}}{\exists A,F( (A,\seq{ \pi_n}{n\in A},F) \in G)}.\]

Then $\pi_{\gen}\in\Pi_b$ and for every $f\in \prod b\cap V\,(f\sqsubset^{\textbf{pr}}\pi_{\gen})$.

Our aim now is to show that any poset of the form $\Pror_b$ is $\sigma$-soft-linked.

\begin{lemma}\label{lem:Prb}
For any $b\in\omega^\omega$, $\Pror_b$ is $\sigma$-soft-linked.  
\end{lemma}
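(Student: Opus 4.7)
The plan is to decompose $\Pror_b$ by the ``stem'' $(A_p,\pi_p)$ together with a bound on $|F_p|$. For $A\in[\omega]^{<\aleph_0}$, $\pi=\seq{\pi_n}{n\in A}$ with $\pi_n\colon\prod_{k<n}b(k)\to b(n)$, and $m<\omega$, set
\[
Q_{A,\pi,m}:=\set{p\in\Pror_b}{A_p=A,\ \pi_p=\pi,\ |F_p|\leq m}.
\]
There are only countably many such triples and $\bigcup_{(A,\pi,m)}Q_{A,\pi,m}=\Pror_b$, so~\ref{maindefbb} of~\autoref{maindef}~\ref{maindefb} holds. For~\ref{maindefbc}, if $p\in Q_{A,\pi,m}$ and $p'\in Q_{A',\pi',m'}$ are compatible, then by~\eqref{Brbox} one of $(A,\pi),(A',\pi')$ extends the other; calling the larger one $(A^*,\pi^*)$, the triple $r:=(A^*,\pi^*,F_p\cup F_{p'})$ is a common lower bound lying in $Q_{A^*,\pi^*,m+m'}$. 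The resulting assignment $((A,\pi,m),(A',\pi',m'))\mapsto(A^*,\pi^*,m+m')$---set arbitrarily when $\pi\cup\pi'$ is not a function, in which case the implication in~\ref{maindefbc} is vacuous---witnesses~\ref{maindefbc}.

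The core of the argument is to show each $Q_{A,\pi,m}$ is leaf-linked. Since $\Pror_b$ is ccc, by~\autoref{thm:a1} this reduces to $\Fr$-linkedness, and I would follow the ultrafilter-limit strategy already applied in this section to $\Mor^*$ and $\DMor$. Fix a non-principal ultrafilter $D$ on $\omega$ and a sequence $\bar p=\seq{p_k=(A,\pi,F_k)}{k<\omega}$ in $Q_{A,\pi,m}$. By pigeonhole, some $m_0\leq m$ satisfies $\set{k<\omega}{|F_k|=m_0}\in D$; using any fixed total order on $\Seq_{<\omega}(b)\cup\prod b$, enumerate $F_k=\set{f^k_i}{i<m_0}$ on this $D$-set. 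For each $i<m_0$, the $D$-limit of $\seq{\mathrm{lh}(f^k_i)}{k<\omega}$ lies in $\omega+1$: if finite, further thinning along $D$ makes $f^k_i$ constantly equal to some $f_i\in\Seq_{<\omega}(b)$; if infinite, set $f_i\in\prod b$ by $f_i(j):=\lim_k^D f^k_i(j)$, well-defined because $b(j)<\omega$. Define $q:=(A,\pi,\set{f_i}{i<m_0})\in\Pror_b$; I claim $q\Vdash|\set{k<\omega}{p_k\in\dot G}|=\aleph_0$.

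By density, it suffices to verify that every $q'=(A',\pi',F')\leq q$ is compatible with $p_k$ for infinitely many~$k$. Let $L:=\max A'+1$; by the construction of the $f_i$, the set $W:=\set{k<\omega}{f^k_i\restriction L=f_i\restriction L\text{ for every }i<m_0}$ is a finite intersection of sets in $D$, hence belongs to $D$. For any $k\in W$, the triple $r:=(A',\pi',F'\cup F_k)$ is a common extension of $q'$ and $p_k$ in $\Pror_b$: clauses~\ref{Brpr:a}--\ref{Brpr:c} of the ordering in~\autoref{Brpr} are immediate, and the only nontrivial point is clause~\ref{Brpr:d} for $r\leq p_k$, namely $\pi'_n(f^k_i\restriction n)=f^k_i(n)$ for each $n\in(A'\smallsetminus A)\cap\dom(f^k_i)$. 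Since $n<L$ and $k\in W$, we have $f^k_i\restriction n=f_i\restriction n$ and $f^k_i(n)=f_i(n)$, so the required equality reduces to $\pi'_n(f_i\restriction n)=f_i(n)$, which is guaranteed by clause~\ref{Brpr:d} for $q'\leq q$ applied to $f_i\in F_q$. This reconciliation---$\pi'$ was forced to ``predict'' the limits $f_i$ when $q'$ was built below~$q$, yet it must also predict the originals $f^k_i$ on positions in $A'\smallsetminus A$---is the main obstacle, and is exactly where the $D$-limit's coincidence on the initial segment of length $L$ carries the argument through.
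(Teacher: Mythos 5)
Your proof is correct but follows a genuinely different route from the paper. The paper defines the height function $h(p)=\max(A_p\cup\{|F_p|\})$, takes the coarse decomposition $Q_m=\set{p}{h(p)\le m}$, and verifies the club property~(\ref{Brtrevol}); the technical work goes into checking condition~(II), which is done by restricting conditions $p\restriction k$ and arguing that incompatibility with a finite antichain can be detected on a bounded initial segment. You instead take the finer decomposition $Q_{A,\pi,m}$ indexed by the ``stem'' $(A,\pi)$ together with a bound on $|F|$, and reduce leaf-linkedness of each piece to $\Fr$-linkedness via~\autoref{thm:a1}, establishing $\Fr$-linkedness by the same ultrafilter-limit technique the paper applies to $\Mor^*$ and $\DMor$. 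The substance of your argument---taking $D$-limits of the $F_k$ coordinatewise, splitting cases by whether the $D$-limit length is finite or $\omega$, and observing that for any $q'\le q$ the clause~\ref{Brpr:d} obligation for $r\le p_k$ reduces, on the $D$-large set $W$ where $f^k_i$ and $f_i$ agree below $L=\max A'+1$, to the clause~\ref{Brpr:d} obligation already satisfied by $q'\le q$---is sound, and in fact shows the stronger fact that $\Pror_b$ is uniformly $\sigma$-uf-$\lim$-linked (you only extract $\Fr$-linkedness, which suffices).

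Two small points worth recording. First, you implicitly use that $\Pror_b$ is ccc to invoke~\autoref{thm:a1}; this holds since $\Pror_b$ is $\sigma$-centered, but it should be said. Second, countability of your index set relies on $b\in\omega^\omega$ having finite values: for fixed finite $A$ there are only finitely many $\pi=\seq{\pi_n}{n\in A}$ precisely because each $\pi_n\colon\prod_{m<n}b(m)\to b(n)$ is a map between finite sets. That hypothesis is given in the Lemma, so this is fine, but it is a place where the paper's height-function decomposition is slightly more robust (it stays countable regardless). Overall, your approach buys unification with the $\Mor^*$/$\DMor$ treatment earlier in the section and reveals the stronger uf-$\lim$-linkedness, while the paper's approach gives a verification entirely in the combinatorial spirit of Brendle--Judah without invoking the ultrafilter machinery.
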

\begin{proof}
Let $h\colon\Pror_b\to\omega$ be defined by $h(p)=\max(A_p\cup\{|F_p|\})$. One can see easily that is a~height function where $p=(A_p,\pi_p,F_p)$. We will verify that the pair $\la\Pbb,h\ra$ satisfies the conditions of the club property~(\ref{Brtrevol}):

(I): Assume that $P=\{p_n\mid n<\omega\}$ is a~decreasing sequence of conditions
in~$\Pror_b$ such that $\{h(p_n)\mid n<\omega\}$ is eventually constant.
Then the sequence $\{(A_{p_n},\pi_{p_n},|F_{p_n}|)\mid n<\omega\}$ is eventually
constant and eventually equal to some $(A,\pi,m)$.
Therefore, one can easily observe that there is $F\in[\omega^\omega]^m$ such that
$(A,\pi,F)$ is a~lower bound of~$P$.
This proves~(I).

(III): If $p,q\in\Pror_b$ are compatible, then
$r=(A_p\cup A_q,\pi_p\cup\pi_q,F_p\cup F_q)$ is a~lower bound of $p$ and~$q$
and $h(r)\le h(p)+h(q)$. Therefore (III)~holds.

It remains to prove~(II). To this end, for $m,k\in\omega$ denote $Q_m=\set{p\in\Pror_b}{h(p)\le m}$ and
$R_{m,k}=\set{p{\restriction}k}{p\in Q_m}$ where
$p{\restriction}k=
(A_p\cap k,\pi_p{\restriction}k,\set{f{\restriction}k}{f\in F_p})$
for $p\in\Pror_b$.
Every set $R_{m,k}$ is finite and
for every $p\in Q_m$, $p\le p{\restriction}k$ and
$h(p{\restriction}k)\le h(p)\le m$.
Thus, it is easy to see that (II) of~(\ref{Brtrevol}) implies (II') where 

\begin{enumerate}
\item[(II')]
$\forall m\in\omega\,\forall P\in[\Pror_b]^{<\omega}\smallsetminus\{\emptyset\}\,\exists k\ge m\,\forall p\in Q_m\,(p\perp P\Rightarrow(p{\restriction}k)\perp P)$.
\end{enumerate}
Therefore, to conclude the proof it is enough to check~(II'). To see this, let  $P\in[\Pror_b]^{<\omega}\smallsetminus\{\emptyset\}$. Find
$k\ge m$ such that $\bigcup_{q\in P}A_q\subseteq k$.
We show that $k$~is as required.
Let $p\in Q_m$ be such that $p\perp P$ and let $q\in P$ be arbitrary.
\begin{enumerate}[label=\rm(\roman*)]
    \item If $\pi_p\cup\pi_q$ is not a~function, then $(p{\restriction}k)\perp q$
because $\dom(\pi_q)=A_q\subseteq k$.
\end{enumerate}
Let us assume that $\pi_p\cup\pi_q$ is a~function.
Denote
$p'=(A_p\cup A_q,\pi_p\cup\pi_q,F_p\cup F_q)$ and
$p''=(A_p\cup A_q,\pi_p\cup\pi_q,(F_p{\restriction}k)\cup F_q)$.
Then $p',p''\in\Pror_b$, $p'\le p''$, and by~using~(\ref{Brbox}) of~\autoref{Brpr},
\begin{align*}
&p\perp q\Leftrightarrow\text{$p'$ is not a~lower bound of $p$ and~$q$;}\\*
&(p{\restriction}k)\perp q\Leftrightarrow
\text{$p''$ is not a~lower bound of $p{\restriction}k$ and~$q$.}
\end{align*}
We assume that $p\perp q$ and hence,
\begin{equation*}
\text{$p'\nleq p$ or $p'\nleq q$.}
\end{equation*}
Note that the relations $(p'\nleq p$ or $p'\nleq q)$ and
$(p''\nleq p{\restriction}k$ or $p''\nleq q)$ cannot be justified by
the violation of~\ref{Brpr:c} of~\autoref{Brpr}.

\begin{enumerate}[label=\rm(\roman*)]
\setcounter{enumi}{1}
\item Assume that $(p'\nleq p$ or $p'\nleq q)$ is justified by the violation
of~$(\beta)$,
i.e.,
\begin{align*}
&\text{$A_p\cup A_q\ne A_p$ and $A_p\ne\emptyset$ and
$\max A_p\ge\min((A_p\cup A_q)\smallsetminus A_p)$ or}\\*
&\text{$A_p\cup A_q\ne A_q$ and $A_q\ne\emptyset$ and
$\max A_q\ge\min((A_p\cup A_q)\smallsetminus A_q)$.}
\end{align*}
Since all mentioned sets are subsets of~$k$, this property gives
$(p''\nleq p{\restriction}k$ or $p''\nleq q)$ by violation of~\ref{Brpr:c} of~\autoref{Brpr}.
Therefore $(p{\restriction}k)\perp q$.
\end{enumerate}

\begin{enumerate}[label=\rm(\roman*)]
\setcounter{enumi}{2}
\item Assume that $(p'\nleq p$ or $p'\nleq q)$ is justified by a~violation
of~\ref{Brpr:b} of~\autoref{Brpr}, i.e., one of the following conditions holds:
\begin{align*}
&\exists f\in F_p\ \exists n\in((A_p\cup A_q)\smallsetminus A_p)\cap\dom(f)\
(\pi_q)_n(f{\restriction}n)\ne f(n)\text{ or}\\*
&\exists f\in F_q\ \exists n\in((A_p\cup A_q)\smallsetminus A_q)\cap\dom(f)\
(\pi_p)_n(f{\restriction}n)\ne f(n).
\end{align*}
\end{enumerate}
Since all mentioned sets are subsets of~$k$ by replacing $F_p$ with
$F_p{\restriction}k$ we get $(p''\nleq p{\restriction}k$ or $p''\nleq q)$
by violation of~\ref{Brpr:b} of~\autoref{Brpr}.
Therefore $(p{\restriction}k)\perp q$ also in this last case.
\end{proof}

\subsection{Forcing the evasion number to be small}
\ 

We now prove the main theorem of this section. This theorem generalizes all of the examples shown in~\autoref{exmsoft}.

\begin{theorem}\label{thm:preva}
  Let $\Por$ be a ccc forcing notion. Assume that $\Por$~is $\theta$-$\mathrm{soft}$-linked. Then $\Por$ is $\theta^+$-$\mathbf{E}$-good.
\end{theorem}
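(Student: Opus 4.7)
The strategy is to build a family $\set{\pi_\alpha}{\alpha<\theta}$ of ground-model generalized predictors, of size $\leq\theta$, which witnesses $\theta^+$-$\mathbf{E}$-goodness, following Brendle's approach in \cite[Sec.~4.3]{BrendlevasionI} adapted to the soft-linked setting. Fix a $\Por$-name $\dot\pi=(\seq{\dot A_k}{k<\omega},\seq{\dot\pi_k}{k<\omega})$ of a member of $\Pi$, together with a witnessing sequence $\seq{Q_\alpha}{\alpha<\theta}$ of $\theta$-soft-linkedness.

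For each $\alpha<\theta$ and $k<\omega$, the $\Por$-name $\max\dot A_k$ is a name for a natural number; since $Q_\alpha$ is $\Fr$-linked by cccness and \autoref{thm:a1}, clause \ref{ConFr} of that lemma produces $N_k^\alpha<\omega$ such that no $p\in Q_\alpha$ forces $N_k^\alpha<\max\dot A_k$. Analogously, for each $\ell\in[k,N_k^\alpha]$ and $\sigma\in\omega^\ell$, apply \autoref{thm:a1}\ref{ConFr} to the name $\dot m_{k,\ell,\sigma}$ defined as $\max\dot\pi_k(\sigma)$ when $\ell\in\dot A_k$ and $0$ otherwise, obtaining $M^\alpha_{k,\ell,\sigma}<\omega$. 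Then set $A_k^\alpha:=[k,N_k^\alpha]$ and $\pi_k^\alpha(\sigma):=[0,M^\alpha_{k,\mathrm{lh}(\sigma),\sigma}]$ for $\sigma$ with $\mathrm{lh}(\sigma)\in A_k^\alpha$; this defines a $\pi_\alpha\in\Pi$.

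The central observation is that any $p\in Q_\alpha$ forcing $\ell\in\dot A_k$ must satisfy $\ell\leq N_k^\alpha$: indeed, from $p\Vdash\ell\in\dot A_k$ we get $p\Vdash\max\dot A_k\geq\ell$, and $\ell>N_k^\alpha$ would entail $p\Vdash N_k^\alpha<\max\dot A_k$, contradicting the choice of $N_k^\alpha$. The analogous bound applies to values forced into $\dot\pi_k(\sigma)$ under the hypothesis $\mathrm{lh}(\sigma)\in\dot A_k$. Consequently, every witness for $\dot\pi$'s prediction at index $k$ that is forced by a condition in $Q_\alpha$ is automatically a witness for $\pi_\alpha$ at index $k$.

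To prove the goodness claim, let $f\in\omega^\omega$ evade every $\pi_\alpha$, and assume for contradiction that some $p\in\Por$ forces $f\in^{\textbf{pr}}\dot\pi$. First extend $p$ to $p'$ deciding the least tail-index $\dot K$ as a specific $K_0\in\omega$, so that $p'\Vdash\forall k\geq K_0\,\exists\ell\in\dot A_k\,(f(\ell)\in\dot\pi_k(f{\upharpoonright}\ell))$; then extend $p'$ to $q\in Q_\alpha$ for some $\alpha<\theta$ by density of $\bigcup_{\alpha<\theta}Q_\alpha$. Pick any $k^*\geq K_0$ at which $\pi_\alpha$ fails on $f$ (infinitely many such $k^*$ exist by hypothesis), extend $q$ to some $s$ deciding a witness $\ell^*\in\dot A_{k^*}$ with $f(\ell^*)\in\dot\pi_{k^*}(f{\upharpoonright}\ell^*)$, and push $s$ into some $Q_\beta$ by density. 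Property~\ref{maindefbc} of \autoref{maindef} applied to the pair $(\alpha,\beta)$ furnishes $\alpha^*<\theta$ together with a common extension $s^*\leq s$ in $Q_{\alpha^*}$, and the central observation applied at $\alpha^*$ yields $\ell^*\in A_{k^*}^{\alpha^*}$ and $f(\ell^*)\in\pi_{k^*}^{\alpha^*}(f{\upharpoonright}\ell^*)$, so $\pi_{\alpha^*}$ predicts $f$ at~$k^*$. The principal obstacle—and the step requiring the most care—is arranging that a \emph{single} $\alpha^*<\theta$ serves for cofinitely many $k^*$, so that $\pi_{\alpha^*}$ genuinely predicts $f$ and the contradiction with $f$ evades $\pi_{\alpha^*}$ materializes; this is accomplished by leveraging the closure afforded by property~\ref{maindefbc} of \autoref{maindef} together with the uniform choice of $K_0$ to confine the required extensions to a common $Q_{\alpha^*}$.
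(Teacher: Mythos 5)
Your plan captures the right flavor of Brendle's argument, but there is a genuine gap precisely where you flag ``the principal obstacle,'' and you do not actually close it. Your bounds $N_k^\alpha$ and $M^\alpha_{k,\ell,\sigma}$, obtained from \autoref{thm:a1}\ref{ConFr}, control what conditions lying \emph{in} $Q_\alpha$ can force about $\dot\pi$. But in the contradiction argument the auxiliary condition $s\leq q$ deciding $\ell^*\in\dot A_{k^*}$ and $f(\ell^*)\in\dot\pi_{k^*}(f{\upharpoonright}\ell^*)$ is produced merely by density, so you have no control on which $Q_\beta$ it lands in; the index $\alpha^*$ furnished by property~\ref{maindefbc} of \autoref{maindef} applied to $(\alpha,\beta)$ therefore varies with $k^*$ through $\beta$. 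Nothing in $\theta$-soft-linkedness forces these $\alpha^*$'s to stabilize, so ``a single $\alpha^*$ serving for cofinitely many $k^*$'' is asserted but not proved --- and, even granted it, you would only have shown that $\pi_{\alpha^*}$ \emph{succeeds} at those $k^*$, which is not by itself the contradiction with $f$ evading $\pi_{\alpha^*}$.

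The paper's proof escapes this exactly by refusing to let the auxiliary conditions float. For each $k$ it fixes in advance a maximal antichain $\{p_k^j\mid j\in\omega\}$ deciding $\dot A_k$; leaf-linkedness of $Q_\alpha$ then gives a finite bound $n_k^\alpha$ so that every $q\in Q_\alpha$ is compatible with some $p_k^j$ with $j<n_k^\alpha$, and, crucially, by \autoref{solfch} the common extension is placed in a $Q$-level $\eta_k^{j,\alpha}=g(\alpha,p_k^j)$ depending only on $\alpha,k,j$ --- a finite set of levels for each $\alpha,k$. This is iterated along further pre-fixed antichains deciding $\dot\pi_k(\sigma)$, with the $Q$-levels of the recursive extensions tracked again by~$g$, so the whole finite tree of levels is determined by $\alpha$ and~$k$. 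The ground-model predictor $\pi^\alpha$ is the union over this controlled tree of all possible decided values of $\dot A_k$ and $\dot\pi_k(\sigma)$. The contradiction is then obtained \emph{locally}: taking a $k\ge k_0$ where $\pi^\alpha$ fails on~$f$, one extends $p\in Q_\alpha$ along the pre-fixed antichains to a $q\leq p$ forcing $\dot A_k\subseteq A_k^\alpha$ and $\dot\pi_k(f{\upharpoonright}\ell)\subseteq\pi_k^\alpha(f{\upharpoonright}\ell)$ for all $\ell\in\dot A_k$, whence $q$ forces $f$ is not predicted at $k$, against $p\Vdash f\in^{\textbf{pr}}\dot\pi$. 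The ingredient missing from your approach is exactly this pre-fixing of the deciding antichains together with the $g$-tracking of $Q$-levels from \autoref{solfch}; without it the index $\alpha^*$ drifts and the argument does not close.
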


\begin{proof}
Since $\Por$ is  $\theta$-$\mathrm{soft}$-linked, by employing~\autoref{solfch}, there is a dense subset
$Q=\bigcup_{\alpha<\theta}Q_\alpha\subseteq\Pbb$ with all sets $Q_\alpha$ being
soft-linked and there is $g\colon\theta\times Q\to\theta$ fulfills~(\ref{solfch:Beq}).

We prove that for every $\Pbb$-name $\dot\pi$ such that
$\Vdash_\Pbb\dot\pi\in\Pi$ there is a non-empty set $H\subseteq \Pi$ (in the ground model) of size ${<}\theta$ such that, for any $f\in\omega^\omega$, if $f$ is $\mathbf{E}$-unbounded over  $H$ then $\Vdash_\Por f\not\in^{\mathbf{pr}}\dot\pi$. Without loss of generality, we can assume that
$\Pbb=Q=\bigcup_{\alpha<\theta}Q_\alpha$.

Suppose that $\Vdash_\Pbb\dot\pi\in\Pi$ and that $\dot\pi$ is of the form
$(\langle\dot A_k\mid k\in\omega\rangle,\langle\dot\pi_k\mid k\in\omega\rangle)$.
For every $k\in\omega$ fix a~maximal antichain
$\{p_k^j\mid j\in\omega\}$ deciding $\dot A_k$.
For every $k,j\in\omega$ let
$A_k^j\in[\omega\smallsetminus k]^{<\omega}$
be such that $p_k^j\Vdash\dot A_k=A_k^j$ and let
$\{\ell_{k,j,i}\mid i<a_{k,j}\}$ be the increasing enumeration of~$A_k^j$.
Since $Q_\alpha$ is soft-linked,
for every $k\in\omega$ we can find $n_k^\alpha\in\omega$ such that
\begin{equation*}
\forall p\in Q_\alpha\
\exists j<n_k^\alpha\
(p\parallel p_k^j).
\end{equation*}
Denote $\eta_k^{j,\alpha}=g(\alpha,p_k^j)$;
then by applying~(\ref{solfch:Beq}),
\begin{equation*}
\forall p\in Q_\alpha\
\exists j<n_k^\alpha\
\exists q\in Q_{\eta_k^{j,\alpha}}\
(q\le p,p_k^j).
\tag{$\varotimes_1$}
\label{boxone}
\end{equation*}

For every $k,j\in\omega$ by induction on $i<a_{k,j}$ we define
\begin{align*}
&p_{k,\sigma}^{j,\tau}\in\Pbb
\text{ and }
\eta_{k,\sigma}^{j,\tau,\alpha}<\theta
&&\text{for $\sigma\in\omega^{\ell_{k,j,i}}$ and $\tau\in\omega^{i+1}$,}\\
&n_{k,\sigma}^{j,\tau,\alpha}\in\omega 
&&\text{for $\sigma\in\omega^{\ell_{k,j,i}}$ and $\tau\in\omega^i$}
\end{align*}
(in fact, we will need $\eta_{k,\sigma}^{j,\tau,\alpha}$ and
$n_{k,\sigma}^{j,\tau,\alpha}$ only for $j<n_k^\alpha$ and only for finitely
many bounded sequences~$\tau$; ``how many~$\tau$'' depends on~$\alpha$).

\paragraph*{Case $i=0$}
For $\sigma\in\omega^{\ell_{k,j,0}}$ let
$\{p_{k,\sigma}^{j,\langle n\rangle}\mid n\in\omega\}$ be a~maximal antichain
of conditions below $p_k^j$ deciding the value of $\dot\pi_k(\sigma)$.
Since all $Q_\beta$'s are soft-linked we can find
$n_{k,\sigma}^{j,\emptyset,\alpha}\in\omega$ such that
\begin{equation*}
\forall p\in Q_{\eta_k^{j,\alpha}}\
p\le p_k^j\Rightarrow
\exists n<n_{k,\sigma}^{j,\emptyset,\alpha}\
(p\parallel p_{k,\sigma}^{j,\langle n\rangle}).
\end{equation*}
Denote $\eta_{k,\sigma}^{j,\langle n\rangle,\alpha}=
g(\eta_k^{j,\alpha},p_{k,\sigma}^{j,\langle n\rangle})$;
then by utilizing~\eqref{solfch:Beq},
\begin{equation*}
\forall p\in Q_{\eta_k^{j,\alpha}}\
p\le p_k^j\Rightarrow
\exists n<n_{k,\sigma}^{j,\emptyset,\alpha}\
\exists q\in Q_{\eta_{k,\sigma}^{j,\langle n\rangle,\alpha}}\
(q\le p,p_{k,\sigma}^{j,\langle n\rangle}).
\tag{$\varotimes_2$}
\label{boxtwo}
\end{equation*}

\paragraph*{Case $i+1$}
For every $\sigma\in\omega^{\ell_{k,j,i+1}}$ and $\tau\in\omega^{i+1}$ let
$\{p_{k,\sigma}^{j,\tau^\frown\langle n\rangle}\mid n\in\omega\}$ be
a~maximal antichain of conditions below
$p_{k,\sigma{\restriction}\ell_{k,j,i}}^{j,\tau}$ deciding the value
of~$\dot\pi_k(\sigma)$.
Since all $Q_\beta$'s are soft-linked we can find
$n_{k,\sigma}^{j,\tau,\alpha}\in\omega$ such that
\begin{equation*}
\forall p\in Q_{\eta_{k,\sigma{\restriction}\ell_{k,j,i}}^{j,\tau,\alpha}}\
p\le p_{k,\sigma{\restriction}\ell_{k,j,i}}^{j,\tau}\Rightarrow
\exists n<n_{k,\sigma}^{j,\tau,\alpha}\
(p\parallel p_{k,\sigma}^{j,\tau^\frown\langle n\rangle}).
\end{equation*}
Denote
$\eta_{k,\sigma}^{j,\tau^\frown\langle n\rangle,\alpha}=
g(\eta_{k,\sigma{\restriction}\ell_{k,j,i}}^{j,\tau,\alpha},
p_{k,\sigma}^{j,\tau^\frown\langle n\rangle})$;
then by using~\eqref{solfch:Beq},
\begin{equation*}
\forall p\in Q_{\eta_{k,\sigma{\restriction}\ell_{k,j,i}}^{j,\tau,\alpha}}\
p\le p_{k,\sigma{\restriction}\ell_{k,j,i}}^{j,\tau}\Rightarrow
\exists n<n_{k,\sigma}^{j,\tau,\alpha}\
\exists q\in Q_{\eta_{k,\sigma}^{j,\tau^\frown\langle n\rangle,\alpha}}\
(q\le p,p_{k,\sigma}^{j,\tau^\frown\langle n\rangle}).
\tag{$\varotimes_3$}
\label{boxthird}
\end{equation*}
This finishes the definitions of all
$p_{k,\sigma}^{j,\tau}$,
$\eta_{k,\sigma}^{j,\tau,\alpha}$, and 
$n_{k,\sigma}^{j,\tau,\alpha}$.

\medskip

Hence for $k,j\in\omega$, $i<a_{k,j}$,
$\sigma\in\omega^{\ell_{k,j,i}}$, and $\tau\in\omega^{i+1}$ the
condition~$p_{k,\sigma}^{j,\tau}$ decides~$\dot\pi_k(\sigma)$;
let $A_{k,\sigma}^{j,\tau}\in[\omega]^{<\omega}$ be such that
$p_{k,\sigma}^{j,\tau}\Vdash \dot\pi_k(\sigma)=A_{k,\sigma}^{j,\tau}$.

Now, for every $\alpha<\theta$ we define
$\pi^\alpha=(\langle A^\alpha_k\mid k\in\omega\rangle,
\langle\pi_k^\alpha\mid k\in\omega\rangle)\in\Pi$ as follows:
Let $A_k^\alpha=\bigcup_{j<n_k^\alpha}A_k^j$ and let
$\pi_k^\alpha\colon\bigcup_{n\in A_k^\alpha}\omega^n\to[\omega]^{<\omega}$
be defined by
\begin{align*}
\pi_k^\alpha(\sigma)=\bigcup\big\{A_{k,\sigma}^{j,\tau}\mid{}
&\exists j<n_k^\alpha\
\exists i<a_{k,j}\\
&\sigma\in\omega^{\ell_{k,j,i}}
\text{ and }
\tau\in\omega^{i+1}
\text{ and }
\forall m\le i\
(\tau(m)<
n_{k,\sigma{\restriction}\ell_{k,j,m}}^{j,\tau{\restriction}m,\alpha})\big\}.
\end{align*}
Let $f\in\omega^\omega$ and assume that $f\not\in^{\mathbf{pr}}\pi^\alpha$ for all $\alpha<\theta$. So it suffices to prove that:
\begin{clm}
$\Vdash_{\Por}f\not\in^{\mathbf{pr}}\dot\pi$.
\end{clm}

Towards a~contradiction assume that there are $p\in\Pbb$ and $k_0\in\omega$
such that
$p\Vdash``\forall k\ge k_0\,\exists\ell\in\dot A_k\,(f(\ell)\in\dot\pi_k(f{\restriction}\ell))$''.
Take $\alpha<\theta$ such that $p\in Q_\alpha$.
By the choice of~$f$ there is $k\ge k_0$ such that
$\forall\ell\in A_k^\alpha\,(f(\ell)\notin\pi_k^\alpha(f{\restriction}\ell))$.
By~using~\eqref{boxone}, there is $j<n_k^\alpha$ and there is a~condition 
$q_k^{j,\alpha}\in Q_{\eta_k^{j,\alpha}}$ such that $q_k^{j,\alpha}\le p,p_k^j$.
Then by~using~\eqref{boxtwo} there is
$n_0<n_{k,f{\restriction}\ell_{k,j,0}}^{j,\emptyset,\alpha}$ and
$q_{k,f{\restriction}\ell_{k,j,0}}^{j,\langle n_0\rangle,\alpha}\in
Q_{\eta_{k,f{\restriction}\ell_{k,j,0}}^{j,\langle n_0\rangle,\alpha}}$
such that
$q_{k,f{\restriction}\ell_{k,j,0}}^{j,\langle n_0\rangle,\alpha}\le
q_k^{j,\alpha},p_{k,f{\restriction}\ell_{k,j,0}}^{j,\langle n_0\rangle}$.

Using~\eqref{boxthird} by induction on $i<a_{k,j}$ define
$\tau\in\omega^{a_{k,j}}$ and a~decreasing sequence of conditions
$\{q_{k,f{\restriction}{\ell_{k,j,i}}}^{j,\tau{\restriction}(i+1),\alpha}\mid
i<a_{k,j}\}$
below $q_{k,f{\restriction}\ell_{k,j,0}}^{j,\langle n_0\rangle,\alpha}$
such that
$\tau(0)=n_0$,
$q_{k,f{\restriction}{\ell_{k,j,i}}}^{j,\tau{\restriction}(i+1),\alpha}\in
Q_{\eta_{k,f{\restriction}{\ell_{k,j,i}}}^{j,\tau{\restriction}(i+1),\alpha}}$,
and
$q_{k,f{\restriction}{\ell_{k,j,i}}}^{j,\tau{\restriction}(i+1),\alpha}\le
p_{k,f{\restriction}{\ell_{k,j,i}}}^{j,\tau{\restriction}(i+1)}$.
(Assume that $\tau{\restriction}(i+1)$ is defined.
By~(\ref{boxone}) we find
$\tau(i+1)\le n_{k,f{\restriction}\ell_{k,j,i+1}}^{j,\tau{\restriction}(i+1),\alpha}$
and
$q_{k,f{\restriction}\ell_{k,j,i+1}}^{j,\tau{\restriction}(i+2),\alpha}\in
Q_{\eta_{k,f{\restriction}\ell_{k,j,i+1}}^{j,\tau{\restriction}(i+2),\alpha}}$
such that
$q_{k,f{\restriction}\ell_{k,j,i+1}}^{j,\tau{\restriction}(i+2),\alpha}\le
q_{k,f{\restriction}\ell_{k,j,i}}^{j,\tau{\restriction}(i+1),\alpha},
p_{k,f{\restriction}\ell_{k,j,i+1}}^{j,\tau{\restriction}(i+2)}$.)

Let $m=a_{k,j}-1$ and $q=q_{k,f{\restriction}\ell_{k,j,m}}^{j,\tau,\alpha}$.
It follows that $\forall i<a_{k,j}\,(q\le p_{k,f{\restriction}\ell_{k,j,i}}^{j,\tau{\restriction}(i+1)}\le p_k^j)$
where
$p_{k,f{\restriction}\ell_{k,j,i}}^{j,\tau{\restriction}(i+1)}\Vdash
\dot\pi_k(f{\restriction}\ell_{k,j,i})=
A_{k,f{\restriction}\ell_{k,j,i}}^{j,\tau{\restriction}(i+1)}$ and
$p_k^j\Vdash\dot A_k=A_k^j$.
Clearly $A_k^j\subseteq A_k^\alpha$
and for every $i<a_{k,j}$,
$A_{k,f{\restriction}\ell_{k,j,i}}^{j,\tau{\restriction}(i+1)}\subseteq
\pi^\alpha(f{\restriction}\ell_{k,j,i})$.
Therefore we get
$q\Vdash\forall\ell\in\dot A_k\,(f(l)\notin\pi_k^\alpha(f{\restriction}\ell)\supseteq\dot\pi_k(f{\restriction}\ell))$
which is a contradiction.
\end{proof}

By a similar argument as in the proof of~\autoref{thm:preva} and by~applying~\autoref{mup-soft}, we have:

\begin{theorem}
Every ccc\ $\theta$-\mup-soft-linked forcing\/ $\Por$~is $\theta^+$-$\mathbf{E}$-good.    
\end{theorem}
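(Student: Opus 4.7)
The plan is to replay the argument of~\autoref{thm:preva} verbatim, with every appeal to~\autoref{solfch} replaced by the structural characterization provided by~\autoref{mup-soft}. First, invoke~\autoref{mup-soft} to extract a dense subset $Q\subseteq\Pbb$ and functions $h\colon Q\to\theta$, $g\colon\theta\times\theta\to\theta$ satisfying conditions (i)--(iii). Writing $Q_\alpha:=\set{p\in Q}{h(p)\le\alpha}$, each $Q_\alpha$ is leaf-linked and $\seq{Q_\alpha}{\alpha<\theta}$ is increasing. Without loss of generality assume $\Pbb=Q$; note that the $g$ produced in the proof of~\autoref{mup-soft} is monotone in both coordinates since it is defined as a minimum over a condition that strengthens as $\alpha,\beta$ grow (for $\alpha\le\alpha'$ the quantification at $(\alpha',\beta)$ ranges over more conditions than at $(\alpha,\beta)$, hence $g(\alpha,\beta)\le g(\alpha',\beta)$).

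Next, mirror the inductive construction of~\autoref{thm:preva}. Given a $\Pbb$-name $\dot\pi=(\seq{\dot A_k}{k<\omega},\seq{\dot\pi_k}{k<\omega})$ for a generalized predictor, fix for each~$k$ a maximal antichain $\{p_k^j\mid j<\omega\}$ deciding $\dot A_k$. For each $\alpha<\theta$, use the leaf-linkedness of $Q_\alpha$ to obtain $n_k^\alpha<\omega$ such that every $p\in Q_\alpha$ is compatible with some $p_k^j$ for $j<n_k^\alpha$. Set $\eta_k^{j,\alpha}:=g(\alpha,h(p_k^j))$; by property~(ii) of~\autoref{mup-soft} together with monotonicity of~$g$, whenever $p\in Q_\alpha$ is compatible with $p_k^j$ there is a common extension $q\in Q_{\eta_k^{j,\alpha}}$, which is the analogue of~\eqref{boxone}. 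Iterate in exactly the same pattern: at stage~$i$, take maximal antichains below $p_{k,\sigma{\restriction}\ell_{k,j,i}}^{j,\tau}$ deciding $\dot\pi_k(\sigma)$, apply leaf-linkedness of the appropriate $Q_\eta$ to obtain bounds $n_{k,\sigma}^{j,\tau,\alpha}$, and define $\eta_{k,\sigma}^{j,\tau^\frown\langle n\rangle,\alpha}:=g(\eta_{k,\sigma{\restriction}\ell_{k,j,i}}^{j,\tau,\alpha},h(p_{k,\sigma}^{j,\tau^\frown\langle n\rangle}))$, yielding the analogues of~\eqref{boxtwo} and~\eqref{boxthird}.

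Assemble the family $H:=\set{\pi^\alpha}{\alpha<\theta}$ of generalized predictors exactly as in the proof of~\autoref{thm:preva}; this set has cardinality ${<}\theta^+$. Given $f\in\omega^\omega$ that is $\mathbf{E}$-unbounded over~$H$, the final contradiction argument transfers verbatim: supposing some $p\in Q_\alpha$ forced that $f$ is eventually predicted by~$\dot\pi$, pick $k$ large so that $f(\ell)\notin\pi_k^\alpha(f{\restriction}\ell)$ for all $\ell\in A_k^\alpha$, descend through the antichain tree using the three analogues of $\varotimes_1,\varotimes_2,\varotimes_3$, and obtain a common lower bound forcing $f(\ell)\notin\pi_k^\alpha(f{\restriction}\ell)\supseteq\dot\pi_k(f{\restriction}\ell)$ on $\dot A_k$, a contradiction.

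The main obstacle is purely bookkeeping. In~\autoref{solfch} the function~$g$ accepted a pair (level, condition), whereas in~\autoref{mup-soft} it accepts a pair of levels, so all the auxiliary ordinals $\eta_{k,\sigma}^{j,\tau,\alpha}$ must be re-anchored to the levels $h(p_k^j)$, $h(p_{k,\sigma}^{j,\tau^\frown\langle n\rangle}),\ldots$ of the specific conditions produced during the induction. Monotonicity of~$g$ combined with the nested structure of $\seq{Q_\alpha}{\alpha<\theta}$ then ensures that every newly produced condition lies in the set whose leaf-linkedness is invoked at the next stage. No genuinely new ideas beyond those of~\autoref{thm:preva} are required.
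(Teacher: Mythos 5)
Your proposal is correct and is precisely the elaboration the paper intends by ``a similar argument as in the proof of \autoref{thm:preva} applying \autoref{mup-soft}''; in particular the key supporting observation that the $g$ produced in the proof of \autoref{mup-soft} is monotone in each coordinate (because the sets $Q_\alpha$ are increasing) is exactly what makes the re-anchoring $\eta:=g(\alpha,h(\cdot))$ yield the required analogues of $\varotimes_1$--$\varotimes_3$, so the rest of the argument transfers verbatim.
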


To conclude this section, we will demonstrate Brendle's consistency of ~$\efrak<\eubd$ by utilizing the results we have obtained so far.

In order to prove it, we begin with some notation:

We begin with some notation, for two posets $\Por$ and $\Qor$, we write $\Por\subsetdot\Qor$ when $\Por$ is a complete suborder of $\Qor$, i.e.,\ the inclusion map from $\Por$ into $\Qor$ is a complete embedding.

\begin{definition}[Direct limit]\label{def:limdir}
We say that $\la\Por_i:\, i\in S\ra$ is a \emph{directed system of posets} if $S$ is a directed preorder and, for any $j\in S$, $\Por_j$ is a poset and $\Por_i\subsetdot\Por_j$ for all $i\leq_S j$.

For such a system, we define its \emph{direct limit} $\limdir_{i\in S}\Por_i:=\bigcup_{i\in S}\Por_i$ ordered by
\[q\leq p \sii \exists\, i\in S\,( p,q\in\Por_i\text{ and }q\leq_{\Por_i} p).\]
\end{definition}

FS iterations add Cohen reals, which is sometimes considered as a limitation of the method. 

\begin{corollary}[{\cite[Cor.~2.7]{CM22}}]\label{Cohenlimit}
Let $\pi$ be a limit ordinal of uncountable cofinality and let $\Por_\pi=\seq{\Por_\alpha,\Qnm_\alpha}{  \alpha<\pi}$ be a FS iteration of non-trivial posets. If $\Por_\pi$ is $\cf(\pi)$-cc then it forces $\pi\leqT\Mbf$. In particular, $\Por_\pi$ forces $\non(\Mwf)\leq\cf(\pi)\leq\cov(\Mwf)$. 
\end{corollary}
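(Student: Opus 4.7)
The plan is to build an explicit Tukey connection $\pi \leqT \Mbf$ inside $V[G]$, and then read off $\non(\Mwf) \leq \cf(\pi) \leq \cov(\Mwf)$ from Tukey monotonicity together with $\Mbf \eqT \Cbf_\Mwf$ and the trivial identification $\bfrak(\pi) = \dfrak(\pi) = \cf(\pi)$.

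For the \emph{upper} map $\Psi_+ \colon \Ior \times 2^\omega \to \pi$, I would use that in the FS iteration $\Por_\pi$ any name for a real is determined by countably many maximal antichains deciding its successive values; since $\Por_\pi$ is $\cf(\pi)$-cc with $\cf(\pi) > \omega$, each such antichain has size $< \cf(\pi)$, and every condition appearing in one has finite support, so the union of all these supports has cardinality $< \cf(\pi)$ and is therefore bounded below $\pi$. Hence every real of $V[G]$ lives in some $V[G_\alpha]$ with $\alpha < \pi$, and I define $\Psi_+(I,y)$ to be the least such $\alpha$ for the pair $(I,y)$.

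For the \emph{lower} map $\Psi_- \colon \pi \to 2^\omega$, for each $\alpha < \pi$ I would pick $\gamma_\alpha$ with $\alpha < \gamma_\alpha < \pi$ and $\cf(\gamma_\alpha) = \omega$ (e.g.\ $\gamma_\alpha = \alpha + \omega$, available because $\pi$ has uncountable cofinality). The classical fact that a FS iteration of non-trivial posets adds a Cohen real at every countable-cofinality limit then yields a real $c_\alpha \in V[G_{\gamma_\alpha}]$ that is Cohen over $V[G_\alpha]$; I set $\Psi_-(\alpha) = c_\alpha$. The Tukey condition holds by contraposition: if $\Psi_+(I,y) = \beta < \alpha$ then $(I,y) \in V[G_\beta] \subseteq V[G_\alpha]$, and since $c_\alpha$ is Cohen over $V[G_\alpha]$, the standard characterization of Cohen reals as exactly the $\Mbf$-unbounded reals over the ground model gives $c_\alpha \not\sqsubm (I,y)$.

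The principal obstacle is the ``Cohen real at countable-cofinality limits'' ingredient. Making it precise requires fixing, along a cofinal $\omega$-sequence in $\gamma_\alpha$ above $\alpha$, two incompatible conditions at each stage (supplied by the non-triviality of the relevant $\Qnm_{\alpha_n}$), extracting a function $c_\alpha \in 2^\omega$ from the generic's successive choice between them, and running a density argument to check that $c_\alpha$ meets every dense subset of $2^{<\omega}$ coded in $V[G_\alpha]$, using that such dense sets are ``reflected'' into a bounded portion of the tail of the iteration by the FS structure. Once this is in place, the ``in particular'' clause is immediate: $\pi \leqT \Mbf$ yields $\bfrak(\Mbf) \leq \bfrak(\pi) = \cf(\pi)$ and $\cf(\pi) = \dfrak(\pi) \leq \dfrak(\Mbf)$, while $\Mbf \eqT \Cbf_\Mwf$ identifies $\bfrak(\Mbf) = \non(\Mwf)$ and $\dfrak(\Mbf) = \cov(\Mwf)$.
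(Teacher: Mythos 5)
Your proof is correct, and it takes the standard route: it builds the Tukey connection explicitly by using the $\cf(\pi)$-cc together with finite supports to place every pair $(I,y)\in V[G]$ into some intermediate model $V[G_\alpha]$ with $\alpha<\pi$ (giving $\Psi_+$), and by picking, for each $\alpha<\pi$, a countable-cofinality limit $\gamma_\alpha\in(\alpha,\pi)$ where the tail iteration adds a Cohen real over $V[G_\alpha]$ (giving $\Psi_-$), then closing the loop via the characterization of Cohen reals as exactly the $\Mbf$-unbounded reals, which is stated in the paper. The bounding argument is carried out correctly (countably many antichains of size ${<}\cf(\pi)$, finite supports, regularity of $\cf(\pi)$), and the ``in particular'' clause follows from $\Mbf\eqT\Cbf_\Mwf$ together with $\bfrak(\pi)=\dfrak(\pi)=\cf(\pi)$. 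The paper itself does not reprove this result but cites it from \cite[Cor.~2.7]{CM22}; your argument is the one that proof is based on, so this is essentially the same approach.
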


Good posets are preserved along FS iterations as follows.

\begin{theorem}[{\cite[Sec.~4]{BCM2}}]\label{Comgood}
Let $\seq{\Por_\xi,\Qnm_\xi}{\xi<\pi}$ be a FS iteration such that, for $\xi<\pi$, $\Por_\xi$ forces that $\Qnm_\xi$ is a non-trivial $\theta$-cc $\theta$-$\Rbf$-good poset. 
Let $\set{\gamma_\alpha}{\alpha<\delta}$ be an increasing enumeration of $0$ and all limit ordinals smaller than $\pi$ (note that $\gamma_\alpha=\omega\alpha$), and for $\alpha<\delta$ let $\dot c_\alpha$ be a $\Por_{\gamma_{\alpha+1}}$-name of a Cohen real in $X$ over $V_{\gamma_\alpha}$. 

Then $\Por_\pi$ is $\theta$-$\Rbf$-good. Moreover,  if $\pi\geq\theta$ then $\Cbf_{[\pi]^{<\theta}}\leqT\Rbf$, $\bfrak(\Rbf)\leq\theta$ and $|\pi|\leq\dfrak(\Rbf)$.
\end{theorem}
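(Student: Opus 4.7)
The plan is to prove the preservation statement by induction on $\pi$, separating successor and limit stages, and then to derive the Tukey reduction from the Cohen reals added at the limit stages of the iteration. Throughout I would work under the (implicit) assumption that $\theta$ is regular, which is needed to close unions of $<\theta$ many sets of size $<\theta$ under the $\theta$-$\Rbf$-goodness bookkeeping.

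For the successor step $\pi=\xi+1$, factor $\Por_\pi\cong\Por_\xi\ast\Qnm_\xi$. Given a $\Por_\pi$-name $\dot h$ for a member of $Y$, view it as a $\Por_\xi$-name for a $\Qnm_\xi$-name. In $V^{\Por_\xi}$, by $\theta$-$\Rbf$-goodness of $\Qnm_\xi$ there is a $\Por_\xi$-name $\dot H$ of a subset of $Y$ of cardinality ${<}\theta$ witnessing goodness for $\dot h$. Enumerate $\dot H$ via $\Por_\xi$-names $\seq{\dot y_i}{i<\lambda}$ with $\lambda<\theta$, and apply the inductive hypothesis of $\theta$-$\Rbf$-goodness of $\Por_\xi$ to each $\dot y_i$ to obtain a ground-model set $H_i\subseteq Y$ of size ${<}\theta$. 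The union $H:=\bigcup_{i<\lambda}H_i$ has size ${<}\theta$ by regularity of $\theta$, and one verifies that any $x\in X$ that is $\Rbf$-unbounded over $H$ is forced by $\Por_\xi$ to be $\Rbf$-unbounded over $\dot H$, hence $\Vdash_{\Por_\pi}x\not\sqsubset\dot h$.

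For the limit step the key tool is the $\theta$-cc: every $\Por_\pi$-name for a member of $Y$ is determined by antichains of size ${<}\theta$. If $\cf(\pi)\geq\theta$, the supports of such antichains are bounded in~$\pi$, so $\dot h$ reduces to a $\Por_\xi$-name for some $\xi<\pi$ and the inductive hypothesis applies. If $\cf(\pi)<\theta$, fix a cofinal sequence $\seq{\xi_\rho}{\rho<\cf(\pi)}$ in $\pi$; the name $\dot h$ decomposes into a union of $\cf(\pi)$ many $\Por_{\xi_\rho}$-names, and the witness $H$ is obtained as the union of the ${<}\theta$ witnesses supplied at each stage $\xi_\rho$, again of size ${<}\theta$ by regularity.

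For the \emph{Moreover} part, assume $\pi\geq\theta$. I would construct the Tukey connection $\Cbf_{[\pi]^{<\theta}}\leqT\Rbf$ in $V^{\Por_\pi}$ by setting $\Psi_-\colon\pi\to X$, $\Psi_-(\alpha):=\dot c_\alpha$, and defining $\Psi_+\colon Y\to[\pi]^{<\theta}$ by sending each $y$ to a $<\theta$-subset of $\pi$ extracted from the goodness witness of a canonical name for $y$ via a reflection argument that exploits Remark~\ref{Prsremark} (so that each Cohen real $\dot c_\alpha$ added sufficiently late avoids the meager capture-set $\bigcup_{h\in H}\{x\st x\sqsubset h\}$). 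The promised bounds $\bfrak(\Rbf)\leq\theta$ and $|\pi|\leq\dfrak(\Rbf)$ then follow from the standard computations $\bfrak(\Cbf_{[\pi]^{<\theta}})=\theta$ and $\dfrak(\Cbf_{[\pi]^{<\theta}})=|\pi|$ together with the Tukey reduction. The main obstacle I expect is the limit case with $\cf(\pi)<\theta$, where care is needed that the union of witnesses stays of size strictly below $\theta$, and the construction of~$\Psi_+$ in the \emph{Moreover} part, where one must bound the index set $\set{\alpha<\pi}{\dot c_\alpha\sqsubset y}$ by ${<}\theta$ uniformly using only ground-model data derived from a name for~$y$.
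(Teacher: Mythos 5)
Since the paper cites this result from \cite[Sec.~4]{BCM2} without reproducing a proof, I can only assess the proposal against what a correct argument must contain; on that score there are two genuine gaps.

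The critical gap is in the limit case $\cf(\pi)<\theta$, which is the technical heart of the preservation theorem. You write that ``the name $\dot h$ decomposes into a union of $\cf(\pi)$ many $\Por_{\xi_\rho}$-names,'' but a $\Por_\pi$-name for a member of the Polish space $Z\supseteq Y$ is not a set that unions; at best one obtains, for a cofinal sequence $\xi_\rho\to\pi$, successive approximations $\dot h_\rho$ that agree with $\dot h$ on ever larger finite pieces. The difficulty is then to pass from ``$x$ is forced not below any $\dot h_\rho$'' to ``$x$ is forced not below $\dot h$,'' and this is exactly where the structure of a gPrs (Definition~\ref{def:Prs}~\ref{def:Prsc}) is indispensable: $\sqsubset=\bigcup_n\sqsubset_n$ with each $\sqsubset_n$ closed, increasing, and with $(\sqsubset_n)^y$ closed nowhere dense. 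The relation $\sqsubset$ is $\Sigma^0_2$, not open, so a naive ``pass to the limit of approximations'' fails, and the fusion/approximation argument must thread through the closed sections $\sqsubset_n$. Your proposal never invokes this structure, and without it the countable-cofinality step does not close. (The case $\cf(\pi)\geq\theta$ and the two-step successor argument are fine.)

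The ``Moreover'' part is also left at the level of intent. Taking $\Psi_-(\alpha)=c_\alpha$ is the right move, but defining $\Psi_+(y)$ as ``a $<\theta$-subset extracted from the goodness witness of a canonical name for $y$ via a reflection argument exploiting Remark~\ref{Prsremark}'' does not produce the bound you need. Remark~\ref{Prsremark} only says $\Cbf_{\Mwf(X)}\leqT\Rbf$ and does not control the index set $\set{\alpha}{c_\alpha\sqsubset y}$. Moreover, goodness of $\Por_\pi$ over $V$ gives a ground-model witness $H\subseteq Y$ applicable only to ground-model $x$, while each $c_\alpha$ lives in $V_{\gamma_{\alpha+1}}$, so it cannot be fed directly into that witness. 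What is actually required is goodness of the tail quotients $\Por_\pi/\Por_{\gamma_\alpha}$ (and hence a quotient-closure statement for the inductive hypothesis, not just closure under initial segments), combined with an induction along the $\gamma_\alpha$'s showing that $\set{c_\beta}{\beta<\alpha}$ stays a $\theta$-$\Rbf$-unbounded family in $V_{\gamma_\alpha}$. You flag this as the expected obstacle, but the proposal as written does not overcome it.

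Finally, a minor but necessary point: regularity of $\theta$ should be stated as a standing hypothesis, since both the successor step (a union of ${<}\theta$ sets each of size ${<}\theta$) and the small-cofinality limit step depend on it; the paper's formulation leaves this implicit but it is genuinely needed.
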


The following theorem illustrates the effect of iterating forcing $\Por_b$ on Cicho\'n's diagram.

\begin{theorem}\label{FSitPb}
Let $\pi$ be an ordinal of uncountable cofinality such that $|\pi|^{\aleph_0}=|\pi|$. 
The FS iteration of $\Por_b$ of length $\pi$ (i.e. the FS iteration $\seq{\Por_\alpha,\Qnm_\alpha}{ \alpha<\pi}$, where each $\Qnm_\alpha$ is a $\Por_\alpha$-name of $\Por_b$) forces 
\begin{enumerate}[label=\rm(\arabic*)]
    \item $\cfrak=|\pi|$;
    \item $\Cbf_{\NAwf}\eqT\Cbf_\Nwf^\perp\eqT\omega^\omega\eqT\mathbf{E}\eqT\Cbf_{[\R]^{<\aleph_1}}$; and
    \item $\Cbf_\Mwf\eqT\mathbf{E}_b\eqT\cf(\pi)$ for any $b\in((\omega+1)\smallsetminus2)^\omega$.
\end{enumerate}
In particular, it is forced~\autoref{FigthmB}.
\begin{figure}[ht!]
\centering
\begin{tikzpicture}[scale=1.2]
\small{
\node (aleph1) at (-1,3) {$\aleph_1$};
\node (addn) at (0.5,3){$\add(\Nwf)$};
\node (covn) at (0.5,7){$\cov(\Nwf)$};
\node (nonn) at (9.5,3) {$\non(\Nwf)$} ;
\node (cfn) at (9.5,7) {$\cof(\Nwf)$} ;
\node (addm) at (4,3) {$\add(\Mwf)$} ;
\node (covm) at (6.9,3) {$\cov(\Mwf)$} ;
\node (nonm) at (4,7) {$\non(\Mwf)$} ;
\node (cfm) at (6.9,7) {$\cof(\Mwf)$} ;
\node (b) at (4,5) {$\bfrak$};
\node (d) at (6.9,5) {$\dfrak$};
\node (c) at (11,7) {$\cfrak$};
\node (nonna) at (
2.5,4) {$\non(\NAwf)$};
\node (cove) at (1.8,5) {$\efrak$};
\node (eu) at (2.5,6.2) {$\eubd$};
\draw (aleph1) edge[->] (addn)
      (addn) edge[->] (covn)
      (covn) edge [->] (nonm)
      (nonm)edge [->] (cfm)
      (cfm)edge [->] (cfn)
      (cfn) edge[->] (c);

\draw
   (addn) edge [->]  (addm)
   (addm) edge [->]  (covm)
   (covm) edge [->]  (nonn)
   (cove) edge [->]  (eu)
   (eu) edge [->]  (nonm)
   (nonn) edge [->]  (cfn);
\draw (addm) edge [->] (b)
      (b)  edge [->] (nonm);
\draw (covm) edge [->] (d)
      (d)  edge[->] (cfm);
\draw (b) edge [->] (d);

\draw  (addn)edge [->] (nonna);     
\draw  (addn) edge [->] (cove);
          
\draw (cove) edge [line width=.15cm,white,-] (covm)
      (cove) edge [->] (covm);
\draw (nonna) edge [line width=.15cm,white,-] (nonn)
      (nonna) edge [->] (nonn);

\draw (nonna) edge [line width=.15cm,white,-] (eu)
      (nonna) edge [->] (eu);

\draw[color=sug,line width=.05cm] (1.8,7.5)--(1.8,5.6); 
\draw[color=sug,line width=.05cm] (1.8,5.6)--(4.6,5.6);
\draw[color=sug,line width=.05cm] (4.6,5.6)--(4.6,2.5);
\draw[color=sug,line width=.05cm] (6,7.5)--(6,4.4); 
\draw[color=sug,line width=.05cm] (6,4.4)--(7.5,4.4); 
\draw[color=sug,line width=.05cm] (7.5,4.4)--(7.5,2.5);

\draw[circle, fill=yellow,color=yellow] (1.1,5) circle (0.4);
\draw[circle, fill=yellow,color=yellow] (5.25,6) circle (0.4);
\draw[circle, fill=yellow,color=yellow] (8.25,6) circle (0.4);
\node at (1.1,5) {$\aleph_1$};
\node at (5.25,6) {$\cf(\pi)$};
\node at (8.25,6) {$|\pi|$};
}
\end{tikzpicture}
\caption{Cichon's diagram after adding $\pi$-many predictors reals with $\Pror_b$, where $\pi$ has uncountable cofinality and $|\pi|^{\aleph_0}=|\pi|$.}
\label{FigthmB}
\end{figure}
\end{theorem}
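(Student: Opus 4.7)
I~would organize the argument into three parts: the size of~$\cfrak$, preservation of the small cardinals on the left, and the identification of the middle column with~$\cf(\pi)$.

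\emph{Size of the continuum.} A~standard nice-name count for a FS ccc iteration of length $\pi$ whose step has size $\aleph_0$, together with the hypothesis $|\pi|^{\aleph_0}=|\pi|$, gives $|\Pbb_\pi|=|\pi|$, and hence $\Pbb_\pi$ forces $\cfrak=|\pi|$; in particular $\dfrak(\Cbf_{[\R]^{<\aleph_1}})=|\pi|$ while $\bfrak(\Cbf_{[\R]^{<\aleph_1}})=\aleph_1$ is automatic.

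\emph{Preservation of the left side.} The engine is that $\Pbb_b$ is both $\sigma$-soft-linked (\autoref{lem:Prb}) and $\sigma$-centered. The first implies, via~\autoref{thm:preva}, that $\Pbb_b$ is $\aleph_1$-$\mathbf{E}$-good, and via~\autoref{SoftFr} that it is $\sigma$-$\Fr$-linked, hence $\aleph_1$-$\baire$-good by~\autoref{exm:good}\ref{exm:gooda}. The second gives $\aleph_1$-$\Cn$-goodness and $\aleph_1$-$\Lc^*(b,\id_\omega)$-goodness for every~$b$, by~\autoref{exm:good}\ref{exm:goodb} and~\ref{exm:goodc}. Invoking \autoref{Comgood} with $\pi\ge\aleph_1$ on each $\Rbf\in\{\mathbf{E},\baire,\Cn,\Lc^*(b,\id_\omega)\}$, the iteration $\Pbb_\pi$ remains $\aleph_1$-$\Rbf$-good and forces $\bfrak(\Rbf)\le\aleph_1$ and $|\pi|\le\dfrak(\Rbf)$. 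Translating: $\efrak\le\aleph_1$ and $\mathfrak{pr}\ge|\pi|$; $\bfrak\le\aleph_1$ and $\dfrak\ge|\pi|$; $\cov(\Nwf)\le\aleph_1$ and $\non(\Nwf)\ge|\pi|$. Using the Tukey reduction $\Lc^*(b,\id_\omega)\leqT\Lc(b,\id_\omega)$ from~\autoref{exm:good}\ref{exm:goodc}\ref{KOa} together with~\autoref{thm:paw} yields $\non(\NAwf)=\minLc\le\aleph_1$ and dually $\cov(\NAwf)\ge|\pi|$. All upper bounds $\le|\pi|$ come from $\cfrak=|\pi|$; together with the ZFC Tukey reductions among $\Cbf_{\NAwf}$, $\Cbf_\Nwf^\perp$, $\baire$, $\mathbf{E}$ and $\Cbf_{[\R]^{<\aleph_1}}$, this yields the Tukey equivalences of clause~(2).

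\emph{Role of $\cf(\pi)$.} By~\autoref{Cohenlimit}, $\Pbb_\pi$ forces $\cf(\pi)\leqT\Mbf\eqT\Cbf_\Mwf$, whence $\non(\Mwf)\le\cf(\pi)\le\cov(\Mwf)$. The crucial extra input is that $\Pbb_b$ adds a~generic predictor $\pi_{\gen}$ predicting every ground-model $f\in\prod b$. Fixing a cofinal sequence $\seq{\alpha_\xi}{\xi<\cf(\pi)}$ in~$\pi$, the family $\seq{\pi_{\gen,\alpha_\xi}}{\xi<\cf(\pi)}$ is $\mathbf{E}_b$-dominating in the extension: any $f\in\prod b$ lies in $V[G_{\alpha_\xi}]$ for some $\xi$ by ccc, and is predicted by $\pi_{\gen,\alpha_\eta}$ for any $\eta>\xi$. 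Conversely, any family of fewer than $\cf(\pi)$ reals lies in a single $V[G_{\alpha_\xi}]$ and is hence predicted at a later stage, giving $\efrak_b\ge\cf(\pi)$. This establishes $\mathbf{E}_b\leqT\cf(\pi)$; combined with $\cf(\pi)\leqT\Cbf_\Mwf\leqT\mathbf{E}_b$ (the last by~\autoref{Rem:Rb}), we obtain the triple Tukey equivalence of clause~(3). The picture~\autoref{FigthmB} is then just Cichoń's diagram filled in by the computed values.

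The principal obstacle is the direction $\mathbf{E}_b\leqT\cf(\pi)$: the Tukey map $\Psi_-\colon\prod b\to\cf(\pi)$ assigning $f$ the least $\xi$ with $f\in V[G_{\alpha_\xi}]$ needs the $\cf(\pi)$-cc nice-name machinery, and one must exploit that $\pi_{\gen,\alpha}$ predicts \emph{all} reals of $V[G_\alpha]$---not only those newly added by $\Pbb_\alpha$---to close the ``unbounded side'' of the connection.
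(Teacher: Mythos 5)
Your proof plan mirrors the paper's overall architecture: nice‑name count for $\cfrak=|\pi|$, preservation of the left side via $\theta^+$-$\Rbf$-goodness for $\Rbf\in\{\mathbf{E},\baire,\Cn,\Lc^*\}$ and \autoref{Comgood}, and for the middle column the chain $\cf(\pi)\leqT\Cbf_\Mwf\leqT\mathbf{E}_b$ together with a direct Tukey map $\mathbf{E}_b\leqT\cf(\pi)$ built from a cofinal sequence of generic predictors. These are exactly the ingredients the paper uses.

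The gap is in the treatment of clause~(3), where you implicitly fix a single parameter $b$. Taking the theorem's phrase ``each $\Qnm_\alpha$ is a $\Por_\alpha$-name of $\Por_b$'' literally — one fixed $b$ iterated $\pi$ times — your construction of $\Psi_+\colon\cf(\pi)\to\Pi_b$ via $\xi\mapsto\pi_{\gen,\alpha_\xi}$ does produce predictors in $\Pi_b$, but then the conclusion $\Cbf_\Mwf\eqT\mathbf{E}_b\eqT\cf(\pi)$ is established only for that one $b$, not ``for any $b\in((\omega+1)\smallsetminus 2)^\omega$'' as stated. The paper's proof resolves this by a bookkeeping device: it enumerates all nice $\Por_\alpha$-names $\dot b_\alpha$ for members of $((\omega+1)\smallsetminus 2)^\omega$ (using $|\pi|^{\aleph_0}=|\pi|$ and a suitable enumeration so that every $\dot b$ appears cofinally), and takes $\Qnm_\alpha=\Pror_{\dot b_\alpha}$. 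Then, for a given $b$ appearing in the final model, one finds $\alpha_b$ with $b\in V_{\alpha_b}$, and for each $\alpha\ge\alpha_b$ a later stage $\xi_\alpha$ with $\dot b_{\xi_\alpha}=b$; the map $\alpha\mapsto\pi^{\gen}_{\xi_\alpha}$ is then the dominating half of the Tukey connection. Without this, your claim ``\ldots and is predicted by $\pi_{\gen,\alpha_\eta}$ for any $\eta>\xi$'' fails once the iterand at stage $\alpha_\eta$ is $\Pror_{b'}$ for some $b'\ne b$: that generic object lies in $\Pi_{b'}$, not in $\Pi_b$, and does not predict arbitrary members of $\prod b$.

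A secondary, minor point: for the Tukey map $\Psi_+$ to be well-defined you need not merely ``some $\eta>\xi$'' but a specific choice witnessing $\xi_\alpha\ge\alpha$; this is precisely what the cofinal enumeration of names guarantees and should be made explicit. The rest of your plan (goodness arguments, $\Lc^*(b,\id_\omega)\leqT\Lc(b,\id_\omega)$ plus $\Lc(b,\id_\omega)\leqT\Cbf_{\NAwf}$ for the $\NAwf$ column, and the trivial upper bounds coming from $\cfrak=|\pi|$) matches the paper and is fine.
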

\begin{proof}
\begin{enumerate}[label=\rm($\boxplus$)]
    \item\label{prof:cl} Let $\set{b_\xi}{\xi<\pi}$ be an enumeration of all the nice $\Por_\alpha$-name for all members of $((\omega+1)\smallsetminus2)^\omega$.
\end{enumerate}
We define the iteration at each $\alpha$ as follows:
\[\Qnm_\alpha:=\begin{array}{ll}
         \Pror_{b_\alpha}  & \text{if $\dot b_{\alpha}$ is a $\Pbb_{\alpha+1}$-name for a function in $((\omega+1)\smallsetminus2)^\omega$,}\\
    \end{array}\]
$\Por_{\alpha+1}=\Por_\alpha\ast\Qnm_\alpha$, and $\Por_\alpha=\limdir_{\xi<\alpha} \Por_\xi$ if $\alpha$ limit. 
First, note that $\Por_\pi$ satisfies ccc and forces $\cfrak=|\pi|$. Next, in view of \autoref{Cohenlimit}, $\Por_\pi$ forces $\pi\leqT\Cbf_\Mwf$. Since $\Cbf_\Mwf\leqT\mathbf{E}_b$ (by~\autoref{Rem:Rb}), it suffices to prove that $\mathbf{E}_b\leqT\pi$, that is, in $V_\pi$, there are maps $\Psi_-\colon\prod b\to\pi$ and $\Psi_+\colon\pi\to\Pi_b$ such that, for any $x\in\baire$, and for any $\alpha<\pi$, if $\Psi_-(x)\leq \alpha$, then $x=^{\textbf{pr}}\Psi_+(\alpha)$. To this end, for each $\alpha<\pi$, denote by $\pi^{\gen}_\alpha$ the predictor generic real over $V_\alpha$ added by $\Qnm_\alpha$.

By ccc, there is $\alpha_b<\pi$ such that $b\in V_{\alpha_b}$. Next, for $x\in\prod b\cap V_\pi$, we can find $\alpha_b\leq\alpha_x<\pi$ such that $x\in V_{\alpha_x}$, so put $\Psi_-(x)=\alpha_x$. On the other hand, for $\alpha<\pi$, when $\alpha\geq\alpha_b$, by~\ref{prof:cl} there is an $\xi_\alpha$ such that $b=b_{\xi_\alpha}$, so define $\Psi_+(\alpha)=\pi^{\gen}_{\xi_\alpha}$; otherwise, $\Psi_-(x)$ can be anything. It is clear that $(\Psi_-,\Psi_+)$ is the required Tukey connection.
 
Using the fact that $\Por_\pi$ is obtained by FS iteration $\seq{\Por_\alpha,\Qnm_\alpha}{ \alpha<\pi}$ and that all its iterands are $\baire$-good, $\Cn$-good and $\mathbf{E}$-good (see~\ref{exm:gooda}~and~\ref{exm:goodb} of \autoref{exm:good} and~\autoref{thm:preva}, respectively), so by employing~\autoref{Comgood}, $\Por_\pi$ forces $\Cbf_{[\baire]^{<\aleph_1}}\leqT\baire$, $\Cbf_{[\baire]^{<\aleph_1}}\leqT\Cn$ and $\Cbf_{[\baire]^{<\aleph_1}}\leqT\mathbf{E}$, respectively. Lastly, the opposite Tukey connections can be proved easily.

It remains to see that $\Por$ forces $\Cbf_{\NAwf}\eqT\Cbf_{[\R]^{<\aleph_1}}$. We just prove the nontrivial Tukey connection, i.e., $\Cbf_{[\R]^{<\aleph_1}}\leqT\Cbf_{\NAwf}$. Since all of the iterands of the iteration are $\sigma$-centered, by using~\autoref{exm:good}~\ref{exm:goodc}, they are $\Lc^*(\varrho,\rho)$-good, so by applying~\autoref{Comgood}, $\Por_\pi$ forces $\Cbf_{[\R]^{<\aleph_1}}\leqT\Lc^*(\varrho,\rho)$. Moreover, by using~(a) of~\autoref{exm:good}~\ref{exm:goodc}, $\Por_\pi$ forces $\Cbf_{[\R]^{<\aleph_1}}\leqT\Lc(\varrho,\rho)$. Finally, by the fact that $\Lc(\varrho,\id_\omega)\leqT\Cbf_{\NAwf}$ (see~\cite[Thm.~8.3]{CMlocalc}) holds in ZFC, we obtain that $\Por_\pi$ forces $\Cbf_{[\R]^{<\aleph_1}}\leqT\Cbf_{\NAwf}$. This finishes the proof of the theorem.
\end{proof}

\section{Open problems}

One of our primary motivations for introducing the concept of soft-linkedness was the aim to force the constellation of~\autoref{cichon+e}. The point is that we do not know if restricted versions of the forcing discussed in~\autoref{exmsoft} behave well with our preservation theory for $\efrak$.  Taking into account the latter, the following is of interest:

\begin{problem}\label{restsolft}
Could we modify our soft-linkedness property so that restrictions of the forcing dealt with in~\autoref{exmsoft} do not add predictors?   
\end{problem}

Concerning~previous question, recently, Yamazoe~\cite{Ye} introduced a new linkedness property (see~\autoref{b7}), which keeps the evasion number small and also can be considered on restrictions of forcings. He used this notion to prove the constellation of~\autoref{cichon+e}. 

\begin{definition}[{\cite{Ye}}]\label{b7}
Let $\Por$ be a poset, and $D\subseteq \pts(\omega)$ be a non-principal ultrafilter. A set $Q\subseteq\Por$ is \emph{sufficiently $D$-$\lim$-linked} if there is a function $\lim^{D}\colon Q^\omega\to \Por$ satisfying $(\bigstar_n)$ for all $n\in\omega$ where $(\bigstar_n)\colon$ 
\begin{equation}
 \parbox{0.8\textwidth}{
 Given $\bar p^j=\seq{p_m^j}{m\in\omega}\in Q^\omega$  for $j<n$ and $r\leq\lim^D\bar p^j$ for all $j<n$,\ $\set{m\in\omega}{r\parallel p_m^j\ \textrm{for all}\ j<n}\in D$.
}
\tag{$\bigstar_n$}
\label{eq:b7}
\end{equation}
Additionally, if $\ran(\lim^D)\subseteq Q$, then $Q$ is \emph{sufficiently closed $D$-$\lim$-linked}. We say that $Q$ is \emph{sufficiently uf-$\lim$-linked} if it is sufficiently $D$-$\lim$-linked for any $D$. 

For an infinite cardinal $\mu$, $\Por$ is \emph{sufficiently $\mu$-$D$-$\lim$-linked}
 if $\Por = \bigcup_{\alpha<\mu}Q_\alpha$ for some sufficiently $D$-$\lim$-linked $Q_\alpha$ ($\alpha<\mu$). When these $Q_\alpha$ are sufficiently uf-$\lim$-linked, we say that $\Por$ is \emph{sufficiently $\mu$-uf-$\lim$-linked}.
\end{definition}

He also proved his notion is related to uf-$\lim$-linked (see~\cite[Lem.~3.28]{Ye}). So we ask:

\begin{problem}
Is there any relationship between the notion of closed-ultrafilter-limit and our notion of soft-linkedness?    
\end{problem}

In case~\autoref{restsolft} has a positive answer, we may force the constellation of~\autoref{cichon+e}.

\begin{figure}[H]
\centering
\begin{tikzpicture}[scale=1.2]
\small{
\node (aleph1) at (-1,3) {$\aleph_1$};
\node (addn) at (0.5,3){$\add(\Nwf)$};
\node (covn) at (0.5,7){$\cov(\Nwf)$};
\node (nonn) at (9.5,3) {$\non(\Nwf)$} ;
\node (cfn) at (9.5,7) {$\cof(\Nwf)$} ;
\node (addm) at (3,3) {$\add(\Mwf)$} ;
\node (covm) at (6.9,3) {$\cov(\Mwf)$} ;
\node (nonm) at (3,7) {$\non(\Mwf)$} ;
\node (cfm) at (6.9,7) {$\cof(\Mwf)$} ;
\node (b) at (3,5) {$\bfrak$};
\node (d) at (6.9,5) {$\dfrak$};
\node (c) at (11,7) {$\cfrak$};
\node (cove) at (1.6,4.2) {$\efrak$};
\node (eu) at (1.6,6) {$\eubd$};
\draw (aleph1) edge[->] (addn)
      (addn) edge[->] (covn)
      (covn) edge [->] (nonm)
      (nonm)edge [->] (cfm)
      (cfm)edge [->] (cfn)
      (cfn) edge[->] (c);

\draw
   (addn) edge [->]  (addm)
   (addm) edge [->]  (covm)
   (covm) edge [->]  (nonn)
   (cove) edge [->]  (eu)
   (eu) edge [->]  (nonm)
   (nonn) edge [->]  (cfn);
\draw (addm) edge [->] (b)
      (b)  edge [->] (nonm);
\draw (covm) edge [->] (d)
      (d)  edge[->] (cfm);
\draw (b) edge [->] (d);

\draw   
        (addn) edge [->] (cove);
      
      
\draw (cove) edge [line width=.15cm,white,-] (nonn)
      (cove) edge [->] (nonn);


\draw[color=sug,line width=.05cm] (-0.5,5.4)--(4.4,5.4);
\draw[color=sug,line width=.05cm] (-0.5,2.5)--(-0.5,7.5);

\draw[color=sug,line width=.05cm] (1.2,2.5)--(1.2,5.4);
\draw[color=sug,line width=.05cm] (2.7,3.5)--(2.7,5.4);

\draw[color=sug,line width=.05cm] (1.2,3.5)--(2.7,3.5);

\draw[color=sug,line width=.05cm] (4.4,2.5)--(4.4,7.5);
\draw[color=sug,line width=.05cm] (1.2,5.4)--(1.2,7.5);
 

\draw[circle, fill=yellow,color=yellow] (0,4.7) circle (0.4);
\draw[circle, fill=yellow,color=yellow] (0,6.2) circle (0.4);
\draw[circle, fill=yellow,color=yellow] (3.8,4.4) circle (0.4);
\draw[circle, fill=yellow,color=yellow] (2.15,4.7) circle (0.4);
\draw[circle, fill=yellow,color=yellow] (3.8,6) circle (0.4);
\draw[circle, fill=yellow,color=yellow] (5.5,6) circle (0.4);
\node at (0,4.7) {$\lambda_1$};
\node at (0,6.2) {$\lambda_2$};
\node at (3.8,4.4) {$\lambda_3$};
\node at (2.15,4.7) {$\lambda_4$};
\node at (3.8,6) {$\lambda_5$};
\node at (5.5,6) {$\lambda_6$};
}
\end{tikzpicture}
\caption{Separation of the left side of Cicho\'n's diagram with $\efrak$.}
\label{cichon+e}
\end{figure}

\begin{remark}
The constellation of~\autoref{cichon+e} was first forced by Goldstern, Kellner, Mej\'ia, and Shelah (unpublished). They used finitely additive measures (FAMs) along FS iterations to construct a poset to force that 
\begin{equation*}\label{cicmaxe}
 \aleph_1<\add(\Nwf)<\cov(\Nwf)<\bfrak<\efrak<\non(\Mwf)<\cov(\Mwf)<\dfrak<\non(\Nwf)<\cof(\Nwf).
\end{equation*}
Though the previous work is unpublished, this was announced in~\cite{GKMSe}.   
\end{remark}

On the other hand, consider random forcing $\Bor$ as the poset whose conditions are trees $T\subseteq2^{<\omega}$ such that $\Lb([T])>0$ where $\Lb$ denotes the Lebesgue measure on $2^\omega$. The order is $\subseteq$.

For $(s,m)\in2^{<\omega}\times\omega$ set
       \[B(s,m):=\{T\in\Bor: [T]\subseteq[s]\text{\ and }2^{|s|}\cdot\Lb([T])\geq 1-2^{-(1+m)}\}.\]
First of all, notice that $\bigcup_{(s,m)\in 2^{{<}\omega}\times\omega}B(s,m)$ is dense in $\Bor$, and by \cite[Lem.~3.29]{mejvert} $B(s,m)$ is Fr-linked  for any $m\in\omega$ and $s\in2^{<\omega}$, so~\ref{maindefba} and \ref{maindefbb} of \autoref{maindef}~\ref{maindefb} hold.

Note that if $B(s,m)$'s work then also $B(k,m)$'s must work
because $B(k,m)$ is the union of finitely many ($2^k$ many) of $B(s,m)$'s.

To see the condition~\ref{maindefbc} of~\autoref{maindef}~\ref{maindefb}, we want to show that given any $(k,m)$ and any $(k',m')$ we find $(k^*,m^*)$ in some way.
Assume that $T\in B(k,m)$ and $T'\in B(k',m')$ are compatible and let
$s\in2^k$, $s'\in2^{k'}$ be such that $[T]\subseteq[s]$ and $[T']\subseteq[s']$.
Without loss of generality assume that $k\le k'$ and hence $s\subseteq s'$.
Then
\[\Lb([T'])\ge2^{-k'}\cdot(1-2^{-(m'+1)})=2^{-k'}-2^{-(m'+k'+1)}\]
and
\begin{align*}
    \Lb([T]\cap[s'])&\ge\Lb([T])-2^{-k'}\cdot(2^{k'-k}-1)\\
    &\ge
2^{-k}\cdot(1-2^{-(m+1)})-2^{-k'}\cdot(2^{k'-k}-1)\\
&=2^{-k'}-2^{-(m+k+1)}.
\end{align*}
It follows that
\[\Lb([T\cap T'])\ge
2^{-k'}-2^{-(m'+k'+1)}-2^{-(m+k+1)}.\]
We do not know whether there is a better estimation but this estimation does not
suffice to conclude that there is a~suitable $(k^*,m^*)$: It may happen that $m+k+1\le k'$ and we get a~negative value. Then the condition~\ref{maindefbc} of~\autoref{maindef}~\ref{maindefb} is unclear. So we ask:

\begin{problem}
Is it possible to obtain another representation for random forcing that allows $\Bor$ to be $\sigma$-soft-linked?
\end{problem}

\subsection*{Acknowledgments}  I wish to express my gratitude to Miroslav Repický for his invaluable constructive input on the core concept of this paper. His feedback significantly enhanced the quality of the initial version, and his assistance in revising the content was greatly appreciated. I am also thankful to Diego Mejía for engaging in numerous insightful discussions related to the subject matter and to Adam Marton for his thorough review and helpful suggestions that contributed to the refinement of our final submission.

{\small
\bibliography{name}

\begin{thebibliography}{CMRM24}

\bibitem[BCM21]{BCM}
J\"{o}rg Brendle, Miguel~A. Cardona, and Diego~A. Mej\'{\i}a.
\newblock Filter-linkedness and its effect on preservation of cardinal
  characteristics.
\newblock {\em Ann. Pure Appl. Logic}, 172(1):Paper No. 102856, 30, 2021.

\bibitem[BCM23]{BCM2}
J\"{o}rg Brendle, Miguel~A. Cardona, and Diego~A. Mej\'ia.
\newblock Separating cardinal characteristics of the strong measure zero ideal.
\newblock Preprint, \href{https://arxiv.org/abs/2309.01931}{arXiv:2309.01931},
  2023.

\bibitem[BJ93]{BrJ}
J\"{o}rg Brendle and Haim Judah.
\newblock Perfect sets of random reals.
\newblock {\em Israel J. Math.}, 83(1-2):153--176, 1993.

\bibitem[Bla94]{blasse}
A.~Blass.
\newblock Cardinal characteristics and the product of countably many infinite
  cyclic groups.
\newblock {\em Journal of Algebra}, 169(2):512--540, 1994.

\bibitem[Bla10]{blass}
Andreas Blass.
\newblock Combinatorial cardinal characteristics of the continuum.
\newblock In {\em \-{} Handbook of set theory. {V}ols. 1, 2, 3}, pages
  395--489. Springer, Dordrecht, 2010.

\bibitem[BM14]{BrM}
J{\"o}rg Brendle and Diego~Alejandro Mej{\'{\i}}a.
\newblock Rothberger gaps in fragmented ideals.
\newblock {\em Fund. Math.}, 227(1):35--68, 2014.

\bibitem[BRaR91]{BRR}
Lev Bukovsk\'{y}, Ireneusz Rec\l~aw, and Miroslav Repick\'{y}.
\newblock Spaces not distinguishing pointwise and quasinormal convergence of
  real functions.
\newblock {\em Topology Appl.}, 41(1-2):25--40, 1991.

\bibitem[Bre91]{Br}
J{\"o}rg Brendle.
\newblock Larger cardinals in {C}icho\'n's diagram.
\newblock {\em J. Symbolic Logic}, 56(3):795--810, 1991.

\bibitem[Bre95]{BrendlevasionI}
Jörg Brendle.
\newblock Evasion and prediction - the specker phenomenon and gross space.
\newblock {\em Forum mathematicum}, 7(5):513--542, 1995.

\bibitem[Car23]{Car4E}
Miguel~A. Cardona.
\newblock A friendly iteration forcing that the four cardinal characteristics
  of {$\mathcal E$} can be pairwise different.
\newblock {\em Colloq. Math.}, 173(1):123--157, 2023.

\bibitem[CM19]{CM}
Miguel~A. Cardona and Diego~A. Mej\'{\i}a.
\newblock On cardinal characteristics of {Y}orioka ideals.
\newblock {\em MLQ Math. Log. Q.}, 65(2):170--199, 2019.

\bibitem[CM22]{CM22}
Miguel~A. Cardona and Diego~A. Mej\'{\i}a.
\newblock Forcing constellations of {C}icho\'n's diagram by using the {T}ukey
  order.
\newblock {\em Ky\={o}to Daigaku S\=urikaiseki Kenky\=usho K\=oky\=uroku},
  2213:14--47, 2022.

\bibitem[CM23]{CMlocalc}
Miguel~A. Cardona and Diego~A. Mej\'{\i}a.
\newblock Localization and anti-localization cardinals.
\newblock {\em Ky\={o}to Daigaku S\=urikaiseki Kenky\=usho K\=oky\=uroku},
  2261:47--77, 2023.

\bibitem[CMRM24]{CMR2}
Miguel~A. Cardona, Diego~A. Mej\'{\i}a, and Ismael~E. Rivera-Madrid.
\newblock Uniformity numbers of the null-additive and meager-additive ideals.
\newblock Preprint, \href{https://arxiv.org/abs/2401.15364}{arXiv:2401.15364},
  2024.

\bibitem[GKMS21]{GKMSe}
Martin Goldstern, Jakob Kellner, Diego~A. Mej\'{\i}a, and Saharon Shelah.
\newblock Adding the evasion number to {C}icho\'n’s maximum.
\newblock XVI International Luminy Workshop in Set Theory
  \href{http://dmg.tuwien.ac.at/kellner/2021_Luminy_talk.pdf}{http://dmg.tuwien.ac.at/kellner/2021\_Luminy\_talk.pdf},
  2021.

\bibitem[GMS16]{GMS}
Martin Goldstern, Diego~Alejandro Mej{\'{\i}}a, and Saharon Shelah.
\newblock The left side of {C}icho\'n's diagram.
\newblock {\em Proc. Amer. Math. Soc.}, 144(9):4025--4042, 2016.

\bibitem[Gol93]{Gold}
Martin Goldstern.
\newblock Tools for your forcing construction.
\newblock In {\em Set theory of the reals ({R}amat {G}an, 1991)}, volume~6 of
  {\em Israel Math. Conf. Proc.}, pages 305--360. Bar-Ilan Univ., Ramat Gan,
  1993.

\bibitem[JS90]{JS}
Haim Judah and Saharon Shelah.
\newblock The {K}unen-{M}iller chart ({L}ebesgue measure, the {B}aire property,
  {L}aver reals and preservation theorems for forcing).
\newblock {\em J. Symbolic Logic}, 55(3):909--927, 1990.

\bibitem[KO14]{KO}
Shizuo Kamo and Noboru Osuga.
\newblock Many different covering numbers of {Y}orioka's ideals.
\newblock {\em Arch. Math. Logic}, 53(1-2):43--56, 2014.

\bibitem[Mej19]{mejvert}
Diego~A. Mej\'{\i}a.
\newblock Matrix iterations with vertical support restrictions.
\newblock In {\em Proceedings of the 14th and 15th {A}sian {L}ogic
  {C}onferences}, pages 213--248. World Sci. Publ., Hackensack, NJ, 2019.

\bibitem[Paw85]{paw85}
Janusz Pawlikowski.
\newblock Powers of {T}ransitive {B}ases of {M}easure and {C}ategory.
\newblock {\em Proc. Amer. Math. Soc.}, 93(4):719--729, 1985.

\bibitem[Yam24]{Ye}
Takashi Yamazoe.
\newblock {C}icho\'n’s maximum with evasion number.
\newblock Preprint, \href{https://arxiv.org/abs/2401.14600}{arXiv:2401.14600},
  2024.

\end{thebibliography}
\bibliographystyle{alpha}
}


\end{document}